\PassOptionsToPackage{dvipsnames,svgnames,table}{xcolor}

\documentclass[11pt]{amsart}  

\usepackage[vmargin=1in, hmargin=1.3in]{geometry}  
\usepackage[font=small,format=plain,labelfont=bf,up,textfont=it,up]{caption}
\usepackage{microtype}                  		
\usepackage{graphicx}						
\usepackage{amssymb}
\usepackage{amsmath}
\usepackage{amsthm}
\usepackage{xcolor}
\usepackage{wasysym}
\usepackage{array}
\usepackage{pst-node}
\usepackage{tikz-cd}
\usepackage{tikz}
\usepackage{tikz-qtree}
\usepackage{mathtools}
\usepackage{amsfonts}
\usepackage{bbm}
\usepackage{listings}
\usepackage{MnSymbol}
\usepackage{multirow}
\usepackage{centernot}
\usepackage{ stmaryrd }
\usepackage{subcaption}
\usepackage[backref=page, bookmarks, bookmarksdepth=2, colorlinks=true, linkcolor=blue, citecolor=blue, urlcolor=blue]{hyperref}
\usepackage{enumitem}

\usepackage[export]{adjustbox}
\usepackage{verbatim}
\usepackage{makecell}
\usepackage{xfrac}
\usepackage{float}

\apptocmd{\thebibliography}{\raggedright}{}{}
\renewcommand*{\backref}[1]{}
\renewcommand*{\backrefalt}[4]{%
    \ifcase #1 (Not cited.)%
    \or        (Cited on page~#2.)%
    \else      (Cited on pages~#2.)%
    \fi}

\theoremstyle{plain}
\newtheorem*{theorem*}{Theorem}
\newtheorem*{conjecture*}{Conjecture}
\newtheorem*{corollary*}{Corollary}
\newtheorem{theorem}{Theorem}[section]
\newtheorem{maintheorem}{Theorem}

\newtheorem{proposition}[theorem]{Proposition}
\newtheorem{lemma}[theorem]{Lemma}

\newtheorem{corollary}[theorem]{Corollary}
\newtheorem{conjecture}[theorem]{Conjecture}

\theoremstyle{definition}

\theoremstyle{remark}

\theoremstyle{remark}
\newtheorem{rmk}[theorem]{Remark}
\newenvironment{remark}[1][]{\begin{rmk}[#1]}{\end{rmk}}
\newtheorem{eg}[theorem]{Example}

\definecolor{colorblind_blue}{RGB}{0,114,178}
\definecolor{colorblind_orange}{RGB}{213,94,0}
\definecolor{colorblind_green}{RGB}{0,158,115}
\definecolor{colorblind_purple}{RGB}{204,121,167}
\definecolor{colorblind_darkpurple}{RGB}{126,41,84}

\newcommand{\arxiv}[1]{\href{http://arxiv.org/abs/#1}{{\tt arXiv:#1}}}

\newcommand{\Z}{\mathbb Z}
\newcommand{\Q}{\mathbb Q}

\newcommand\Mod{\ensuremath{\operatorname{Mod}}}

\title[Abelianizations of finite-index subgroups of the
handlebody group]{Abelianizations of finite-index subgroups of the handlebody group}

\author{Annie Holden}
\address{Department of Mathematics, University of Pennsylvania, Philadelphia, PA, USA}
\email{holdena@sas.upenn.edu}

\begin{document}

\newpage

\begin{abstract}

For genus $\geq 4$, it is an open question whether the mapping class group of a handlebody contains a finite-index subgroup with nontrivial rational abelianization. In this paper, we provide evidence that no such subgroup exists. First, we prove that, for all such finite-index subgroups $\Gamma$, meridian multitwists vanish in $H_1(\Gamma; \Q)$. Next, we show that $H_1(\Gamma; \Q) = 0$ for finite-index subgroups $\Gamma$ containing the handlebody Torelli group, or large enough subgroups of the twist group or the handlebody Johnson kernel.

\end{abstract}

\maketitle
\thispagestyle{empty}

\section{Introduction}

Let $\Sigma_g$ be an oriented surface of genus $g$. The mapping class group $\Mod(\Sigma_g)$ is the group of isotopy classes of orientation-preserving diffeomorphisms of $\Sigma_g$. A famous question of Ivanov asks whether $\Mod(\Sigma_g)$ admits a finite-index subgroup with infinite abelianization. See \cite{Ivanov} for an overview and see Problem 2.11 (A) in Kirby’s list \cite{Kirby}. This question is open for $g \geq 3$. In contrast, the analogue of Ivanov's question for the outer automorphism group of a free group, $\operatorname{Out}(F_g)$, has been completely determined. A deep theorem of Kaluba--Kielak--Nowak ($g \geq 6$) \cite{PropertyT6}, Kaluba--Nowak--Ozawa ($g=5$) \cite{PropertyT5}, and Nitsche ($g=4$) \cite{Nitsche} says that $\operatorname{Out}(F_g)$ has Kazhdan's Property (T) when $g \geq 4$. This implies that all finite-index subgroups have finite abelianization.

Fix a handlebody $\mathcal{V}_g$ with $\partial \mathcal{V}_g = \Sigma_g$. The handlebody group $\mathcal{H}_g$ is the subgroup of $\Mod(\Sigma_g)$ consisting of mapping classes that extend to $\mathcal{V}_g$. The handlebody group acts on $\pi_1(\mathcal{V}_g)$, inducing a surjection from
$\mathcal{H}_g$ onto $\operatorname{Out}(F_g)$. In this way, $\mathcal{H}_g$ is a natural bridge between $\operatorname{Mod}(\Sigma_g)$ and $\operatorname{Out}(F_g)$. With this motivation, Hensel \cite[Question 8.7]{Hensel} posed the following ``likely very hard" question: for $g \geq 4$, does $\mathcal{H}_g$ contain a finite-index subgroup with infinite abelianization? Equivalently, for $g \geq 4$, does $\mathcal{H}_g$ contain a finite-index subgroup $\Gamma$ such that $H_1(\Gamma; \Q) \neq 0$?

\begin{remark}
    The groups $\operatorname{Out}(F_2)$ and $\operatorname{Out}(F_3)$ have finite-index subgroups with infinite abelianizations, and so the same is true for $\mathcal{H}_2$ and $\mathcal{H}_3$. We therefore restrict our attention to the cases of $g \geq 4$.
\end{remark}

In this paper, we present evidence that $\mathcal{H}_g$ does not admit such a finite-index subgroup. Since our approach is motivated by analogous results for $\operatorname{Mod}(\Sigma_g)$, we review what is known about $\operatorname{Mod}(\Sigma_g)$ to place our work in context.

\subsection{Vanishing mapping classes} Putman \cite[Theorem A]{Putman} and Bridson \cite[Remark 1]{Bridson} independently proved that, if $g \geq 3$ and $\Gamma$ is a finite-index subgroup of $\operatorname{Mod}(\Sigma_g)$, then powers of Dehn twists vanish in $H_1(\Gamma;\Q)$. This was easily extended by Putman--Wieland \cite[Corollary 2.10]{PutmanWieland} to multitwists.\footnote{A multitwist is a product of powers of Dehn twists $T_{\gamma_1}^{k_1} \dots T_{\gamma_m}^{k_m}$, where $\gamma_{1}, \dots, \gamma_{m}$ are disjoint simple closed curves on $\Sigma_g$.} 

A \emph{meridian} is a simple closed curve on $\Sigma_g$ that bounds a disk in $\mathcal{V}_g$, and a \emph{meridian twist} is a Dehn twist about a meridian. If a multitwist $M = T_{\gamma_1}^{k_1} \dots T_{\gamma_m}^{k_m}$ lies in $\mathcal{H}_g$, the curves $\gamma_i$ are necessarily meridians and we say $M$ is a \emph{meridian multitwist}. We have the following theorem.

\begin{maintheorem}[Meridian multitwists vanish] \label{maintheorem:multitwistsdie}
    For $g \geq 4$, let $\Gamma$ be a finite-index subgroup of $\mathcal{H}_g$. If $M$ is a meridian multitwist in $\Gamma$, then the class $[M] = 0$ in $H_1(\Gamma; \Q)$.
\end{maintheorem}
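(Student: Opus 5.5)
Our plan is to adapt the Putman--Bridson argument for Dehn twists in $\operatorname{Mod}(\Sigma_g)$ to the handlebody setting, first for a single meridian twist power and then for multitwists. For a meridian $\gamma$ we have $T_\gamma^k \in \Gamma$ for some $k \geq 1$, and it suffices to show $[T_\gamma^k]=0$ in $H_1(\Gamma;\Q)$, since $H_1(\Gamma;\Q)$ is torsion-free and $[T_\gamma^{kn}]=n[T_\gamma^k]$.

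The main tool is a \emph{lantern relation realized inside $\mathcal{H}_g$}. Choose a four-holed sphere $S \subset \Sigma_g$ whose four boundary curves $\delta_1,\dots,\delta_4$ and three interior curves $\alpha,\beta,\eta$ are all meridians, so that all seven twists lie in $\mathcal{H}_g$. One example: take a disk in $\mathcal{V}_g$ cutting off three handles and use the induced planar pieces of the boundary. Then $T_\alpha T_\beta T_\eta = T_{\delta_1}T_{\delta_2}T_{\delta_3}T_{\delta_4}$. Following Putman, we pass to the finite-index subgroup $\Gamma' = \Gamma \cap \operatorname{Stab}(S) \cap \ker(\text{action on a finite cover})$ and use the \emph{powered} lantern relation available in finite-index subgroups. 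We would invoke Putman's argument with the point-pushing/Birman exact sequence replaced by the analogous structure for handlebodies (disk-pushing subgroups). The output should be a relation in $H_1(\Gamma';\Q)$ expressing $[T_\alpha^k]+[T_\beta^k]+[T_\eta^k]$ in terms of boundary twist powers. Combined with the facts that $\alpha,\beta,\eta$ are conjugate to each other, and to the $\delta_i$, by elements of $\Gamma'$ commuting appropriately, this forces $[T_\gamma^k]=0$ in $H_1(\Gamma';\Q)$. The transfer map then gives vanishing in $H_1(\Gamma;\Q)$, since the image of $H_1(\Gamma';\Q)\to H_1(\Gamma;\Q)$ receives $[T_\gamma^k]$.

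The step needing $g\geq 4$ is this conjugacy bookkeeping. We need enough room in $\mathcal{V}_g$ to find, after passing to the finite-index subgroup $\Gamma'$, elements of $\Gamma'$ carrying each lantern curve to another while fixing the rest. We would use that $\mathcal{H}_g$ acts on meridians of a given topological type (nonseparating, or separating of a given genus split) transitively. We would also use that the stabilizers of configurations surject onto handlebody groups of complementary pieces of genus $\geq 1$, so that orbits under a finite-index subgroup are still rich.

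For multitwists $M=\prod T_{\gamma_i}^{k_i}\in\Gamma$, we follow Putman--Wieland. Pass to $\Gamma''=\Gamma\cap$ (a finite-index subgroup stabilizing each $\gamma_i$, acting trivially on a homology of a finite cover) so that a power $T_{\gamma_i}^{N}\in\Gamma''$ for each $i$. The previous step, applied to $\Gamma''$, kills each $[T_{\gamma_i}^N]$, hence $[M^N]=\sum (Nk_i/N)[T_{\gamma_i}^{N}]\cdot(\text{rational})=0$ in $H_1(\Gamma'';\Q)$; pushing forward to $\Gamma$ kills $N[M]$, so $[M]=0$.

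The main obstacle, in our judgment, is the single-twist step. We must produce the handlebody version of Putman's lemma, namely that in a finite-index subgroup the lantern-type relation still makes a power of a meridian twist a product of commutators rationally. This is particularly delicate for \emph{separating} meridians, where the needed conjugating elements must themselves lie in $\mathcal{H}_g$. We would first try to embed a genus-$\geq 2$ subhandlebody containing the relevant lantern and apply the argument there.
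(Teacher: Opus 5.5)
Your reduction from meridian multitwists to single twist powers is fine and is essentially the paper's. The gap is in the single-twist step (Proposition~\ref{proposition:twistsvanish}), which carries all the content, and for which the proposal gives no working mechanism.

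\emph{There is no ``powered lantern relation'' in a finite-index subgroup.} The lantern relation $T_\alpha T_\beta T_\eta = T_{\delta_1}T_{\delta_2}T_{\delta_3}T_{\delta_4}$ involves $T_\alpha, T_\beta, T_\eta$ themselves, which need not lie in $\Gamma'$. Since $\alpha,\beta,\eta$ pairwise intersect, these twists do not commute. So no relation among $T_\alpha^k, T_\beta^k, T_\eta^k$ and the $T_{\delta_i}^k$ follows from it.

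\emph{The conjugacy step also fails.} To identify $[T_\alpha^k]$ with $[T_\beta^k]$ in $H_1(\Gamma';\Q)$ you need an element of $\Gamma'$ carrying $\alpha$ to $\beta$. A finite-index subgroup can split an $\mathcal{H}_g$-orbit of meridians into several $\Gamma'$-orbits, and extra genus does not help with that. The ``$3x=4x$'' lantern trick is the standard way to kill twists in the abelianization of the whole group. Its failure to pass to finite-index subgroups is exactly the difficulty behind Ivanov's and Hensel's questions. So, as you suspect, the proposal assumes the key step rather than proving it.

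The missing idea is to let $H_2$ of the ambient group, transferred down, supply the relation, so that no relation among twist powers is ever needed. Your mention of a Birman-type sequence points the right way, but its output is not a lantern-type relation. The paper argues as follows.

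\begin{itemize}
\item Take a subsurface $S$ having $\gamma$ as a boundary curve $\beta$ and bounding a spotted subhandlebody, so that $\mathcal{H}(S)\cong\mathcal{H}_{g_S}^{b_S}$ (Lemma~\ref{lemma:subsurface}).
\item Capping $\beta$ gives a central extension $1\to\langle T_\beta\rangle\to\mathcal{H}(S)\to\mathcal{H}(\overline{S})\to 1$. Compare it with its restriction $1\to\langle T_\beta^n\rangle\to\Gamma_S\to\Gamma_{\overline{S}}\to 1$, where $\Gamma_S$ is the preimage of $\Gamma$.
\item By the five-term exact sequence, $[T_\beta^n]=0$ in $H_1(\Gamma_S;\Q)$ if and only if $H_2(\Gamma_{\overline{S}};\Q)\to\Q$ is onto.
\item By Proposition~\ref{proposition:H1handlebody}, $[T_\beta]=0$ in $H_1(\mathcal{H}(S);\Q)$, so $H_2(\mathcal{H}(\overline{S});\Q)\to\Q$ is onto. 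This is where the genus hypothesis is actually used, not in any conjugacy bookkeeping.
\item The transfer (Lemma~\ref{lemma:finiteindexhomology}) makes $H_2(\Gamma_{\overline{S}};\Q)\to H_2(\mathcal{H}(\overline{S});\Q)$ onto.
\item The induced map on the $\Q$ terms is multiplication by $n$.
\end{itemize}

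Naturality of the five-term sequence then forces $H_2(\Gamma_{\overline{S}};\Q)\to\Q$ to be onto. Hence $[T_\beta^n]=0$ in $H_1(\Gamma_S;\Q)$, and so $[T_\gamma^n]=0$ in $H_1(\Gamma;\Q)$.
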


Next, we prove Theorems \ref{maintheorem:B}, \ref{maintheorem:torelli}, and \ref{maintheorem:johnsonkernel}, which each give conditions on finite-index subgroups of $\mathcal{H}_g$ that imply these subgroups have finite abelianizations. 

\subsection{Subgroups containing large pieces of the twist group} If $\Gamma$ is a subgroup of $\mathcal{H}_g$, we define $T(\Gamma)$ to be the group generated by the set $$\{T_{\gamma}^n \in \Gamma \; | \; \gamma \text{ is a meridian} \}.$$ The twist group $\mathcal{T}_g$ is the subgroup of $\mathcal{H}_g$ generated by meridian twists, and is the kernel of the action on $\pi_1(\mathcal{V}_g)$. See Section \ref{section:actionpi1}. If $\Gamma$ contains $\mathcal{T}_g$, then $T(\Gamma) = \Gamma \cap \mathcal{T}_g$, but the converse does not hold. We have the following theorem.

\begin{maintheorem}[Subgroups containing large pieces of the twist group] \label{maintheorem:B}
    For $g \geq 4$, let $\Gamma$ be a finite-index subgroup of $\mathcal{H}_g$ for which $[\Gamma \cap \mathcal{T}_g : T(\Gamma)] < \infty$. Then $H_1(\Gamma; \Q) = 0$.
\end{maintheorem}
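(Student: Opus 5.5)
Theorem~\ref{maintheorem:B} follows by combining Theorem~\ref{maintheorem:multitwistsdie} with Property (T) for $\operatorname{Out}(F_g)$, $g\geq 4$. We use the short exact sequence
$$1 \to \Gamma \cap \mathcal{T}_g \to \Gamma \to \bar\Gamma \to 1,$$
where $\bar\Gamma$ is the image of $\Gamma$ under $\mathcal{H}_g \to \operatorname{Out}(F_g)$, whose kernel is $\mathcal{T}_g$. Since $\Gamma$ has finite index in $\mathcal{H}_g$, the group $\bar\Gamma$ has finite index in $\operatorname{Out}(F_g)$. By the Kaluba--Kielak--Nowak, Kaluba--Nowak--Ozawa and Nitsche results, $\operatorname{Out}(F_g)$ has Property (T) for $g\ge 4$. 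Property (T) passes to finite-index subgroups, so $H_1(\bar\Gamma;\Q)=0$.

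The five-term exact sequence in rational homology (with $\Q$ coefficients) gives
$$H_1(\Gamma\cap\mathcal{T}_g)_{\bar\Gamma} \to H_1(\Gamma) \to H_1(\bar\Gamma) = 0.$$
So it suffices to show that the image of $H_1(\Gamma\cap\mathcal{T}_g;\Q)\to H_1(\Gamma;\Q)$ is zero.

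Write $K=\Gamma\cap\mathcal{T}_g$. By hypothesis $T(\Gamma)$ has finite index in $K$. A transfer argument then shows that $H_1(T(\Gamma);\Q)\to H_1(K;\Q)$ is surjective. Indeed, the composition $H_1(K)\to H_1(T(\Gamma))\to H_1(K)$ of transfer followed by inclusion is multiplication by the index, which is invertible over $\Q$. Hence the image of $H_1(K;\Q)$ in $H_1(\Gamma;\Q)$ equals the image of $H_1(T(\Gamma);\Q)$. The group $T(\Gamma)$ is generated by elements $T_\gamma^n\in\Gamma$ with $\gamma$ a meridian, so $H_1(T(\Gamma);\Q)$ is spanned by their classes. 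Each such $T_\gamma^n$ is a meridian multitwist lying in $\Gamma$, so Theorem~\ref{maintheorem:multitwistsdie} gives $[T_\gamma^n]=0$ in $H_1(\Gamma;\Q)$. Therefore the image of $H_1(K;\Q)$ vanishes, and $H_1(\Gamma;\Q)=0$.

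This route places essentially all the difficulty in Theorem~\ref{maintheorem:multitwistsdie}, which we may assume. The remaining points to check carefully are two facts. First, $\mathcal{T}_g$ is exactly the kernel of $\mathcal{H}_g\to\operatorname{Out}(F_g)$ (from Section~\ref{section:actionpi1}), and this map is surjective, so that $\bar\Gamma$ really has finite index. Second, the transfer step is legitimate because $[K:T(\Gamma)]<\infty$. I expect no serious obstacle beyond citing these facts correctly.
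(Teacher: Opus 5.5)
Your proposal is correct and follows the paper's proof essentially step for step. The paper takes the five-term sequence for $1 \to \Gamma\cap\mathcal{T}_g \to \Gamma \to \Gamma/(\Gamma\cap\mathcal{T}_g) \to 1$ and uses Property (T) to get $H_1$ of the quotient $=0$. It then uses the transfer (Lemma~\ref{lemma:finiteindexhomology}) to make $H_1(T(\Gamma);\Q)\to H_1(\Gamma\cap\mathcal{T}_g;\Q)$ surjective, and Theorem~\ref{maintheorem:multitwistsdie} to kill the generators $T_\gamma^n$ of $T(\Gamma)$ in $H_1(\Gamma;\Q)$.
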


Theorem \ref{maintheorem:B} has the following immediate consequences, which are weaker but whose conditions are easier to state and check. 

\begin{corollary*}[Corollary \ref{corollary:kernelaction} and Corollary \ref{corollary:twist}] For $g \geq 4$, let $\Gamma$ be a finite-index subgroup of $\mathcal{H}_g$. We have $H_1(\Gamma; \Q) = 0$ in either of the following cases: \begin{itemize}
    \item The kernel of the action of $\Gamma$ on $\pi_1(\mathcal{V}_g)$ is generated by meridian multitwists\footnote{Equivalently, $\Gamma \cap \mathcal{T}_g$ is generated by meridian multitwists.}; or
    \item The group $\Gamma$ contains the twist group $\mathcal{T}_g$.
\end{itemize}
\end{corollary*}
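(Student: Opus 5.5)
We derive the corollary from Theorem \ref{maintheorem:B}. In each case the plan is to check that $[\Gamma \cap \mathcal{T}_g : T(\Gamma)] < \infty$, and then Theorem \ref{maintheorem:B} gives $H_1(\Gamma;\Q) = 0$ directly. The first step is to identify the kernel of the action of $\Gamma$ on $\pi_1(\mathcal{V}_g)$ with $\Gamma \cap \mathcal{T}_g$. This holds because $\mathcal{T}_g$ is exactly the kernel of $\mathcal{H}_g \to \operatorname{Out}(F_g)$ (Section \ref{section:actionpi1}).

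\textbf{Second case ($\mathcal{T}_g \subseteq \Gamma$).} Here $\Gamma \cap \mathcal{T}_g = \mathcal{T}_g$. The twist group is generated by meridian twists $T_\gamma$, and each such twist lies in $\Gamma$. Hence every generator lies in $T(\Gamma)$, taking $n=1$. This gives $T(\Gamma) = \mathcal{T}_g = \Gamma \cap \mathcal{T}_g$, so the index is $1$.

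\textbf{First case ($\Gamma \cap \mathcal{T}_g$ generated by meridian multitwists).} A single meridian twist $T_\gamma$ need not lie in $\Gamma$, so the argument must go through generators. Take a generating multitwist $M = T_{\gamma_1}^{k_1}\cdots T_{\gamma_m}^{k_m}$ in $\Gamma$, with the $\gamma_i$ disjoint meridians. Since $\Gamma$ has finite index in $\mathcal{H}_g$, there is a power $N \ge 1$ such that $T_{\gamma_i}^{N} \in \Gamma$ for every $i$. So $T_{\gamma_i}^{N} \in T(\Gamma)$ for all $i$, and because the twists commute, $M^N \in T(\Gamma)$.

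The main obstacle is that this only shows every generator of $\Gamma \cap \mathcal{T}_g$ has a power in $T(\Gamma)$, which does not by itself bound the index. One cleaner route avoids the issue. For each generating multitwist $M$ in $\Gamma$, the disjoint meridians $\gamma_i$ can be regarded as a single multicurve. The factorization of $M$ into commuting twists is unique, and this should let us argue that $M$ is itself a product of elements $T_{\gamma_i}^{k_i}$. The sticking point is whether those factors lie in $\Gamma$; in general they need not.

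The realistic plan is therefore to pass to a normal core. Let $\Gamma' \subseteq \Gamma$ be normal of finite index in $\mathcal{H}_g$, and argue that $T(\Gamma)\cap \mathcal{T}_g$ contains $\Gamma \cap \mathcal{T}_g \cap \{\text{products of } N\text{th powers}\}$. If this quantitative step fails, we fall back on a different reduction: a finite-index enlargement of the kernel of $\Gamma$ to which Theorem \ref{maintheorem:B} still applies, namely working at the level of $\Gamma \cap \mathcal{T}_g$ modulo $T(\Gamma)$ using that $T(\Gamma)$ is normal in $\Gamma$. Under either route, we expect this step to be where the care is needed.
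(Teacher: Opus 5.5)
Your second case is correct and matches the paper: every meridian twist lies in $\Gamma$, so $T(\Gamma)=\mathcal{T}_g=\Gamma\cap\mathcal{T}_g$.

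The first case, however, is left unproved. You correctly show that each generating multitwist $M$ has a power $M^N\in T(\Gamma)$. You are also right that this alone does not bound $[\Gamma\cap\mathcal{T}_g:T(\Gamma)]$: in $F_2=\langle a,b\rangle$, the normal closure of $a^2,b^2$ contains a power of each generator but has infinite index, since the quotient is $\Z/2*\Z/2$. (The paper's main-text proof of Corollary~\ref{corollary:kernelaction} makes exactly this inference without further justification, so your doubt is well placed.) But none of your repairs closes the gap. The ``unique factorization'' paragraph only restates the definition of $M$. The normal-core step is never formulated precisely, and even a subgroup containing all $N$th powers need not have finite index (Burnside-type phenomena). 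The ``modulo $T(\Gamma)$'' fallback is never carried out.

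The missing idea is that finite index is more than you need. In the proof of Theorem~\ref{maintheorem:B}, the hypothesis serves only to show that the map $i\colon H_1(\Gamma\cap\mathcal{T}_g;\Q)_{\Gamma/(\Gamma\cap\mathcal{T}_g)}\to H_1(\Gamma;\Q)$ is zero. In your first case this is immediate. $H_1(\Gamma\cap\mathcal{T}_g;\Q)$ is spanned by the classes of any generating set, the given generators are meridian multitwists lying in $\Gamma$, and Theorem~\ref{maintheorem:multitwistsdie} says each has zero class in $H_1(\Gamma;\Q)$. Hence $H_1(\Gamma;\Q)\cong H_1(\Gamma/(\Gamma\cap\mathcal{T}_g);\Q)$, and this vanishes because $\Gamma/(\Gamma\cap\mathcal{T}_g)$ has finite index in $\operatorname{Out}(F_g)$, which has property (T). This is essentially the paper's appendix proof.

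Alternatively, your ``modulo $T(\Gamma)$'' idea can be made precise as follows. $T(\Gamma)$ is normal in $\Gamma\cap\mathcal{T}_g$, since elements of $\Gamma$ carry meridians to meridians. The quotient is generated by the images of the multitwists $M$, which are torsion, so its rational abelianization vanishes. The five-term sequence for $1\to T(\Gamma)\to\Gamma\cap\mathcal{T}_g\to(\Gamma\cap\mathcal{T}_g)/T(\Gamma)\to 1$ then makes $H_1(T(\Gamma);\Q)\to H_1(\Gamma\cap\mathcal{T}_g;\Q)$ surjective. That surjectivity is all the proof of Theorem~\ref{maintheorem:B} actually uses in place of Lemma~\ref{lemma:finiteindexhomology}.
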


Since the action of $\operatorname{Mod}(\Sigma_g)$ on $\pi_1(\Sigma_g)$ is faithful, there is no analogue of Theorem \ref{maintheorem:B} for $\operatorname{Mod}(\Sigma_g)$. Nonetheless, our argument is inspired by the proof of \cite[Theorem B]{Putman}. We also use the fact that the image of $\Gamma$ in $\operatorname{Out}(F_g)$ has finite abelianization.

\subsection{Subgroups containing the handlebody Torelli group} \label{section:introTorelli} The group $\operatorname{Mod}(\Sigma_g)$ acts on $H_1(\Sigma_g; \Z)$ and the kernel of this action is the \emph{Torelli group}, denoted $\mathcal{I}_g$. Hain \cite{Hain} and McCarthy \cite{McCarthy} showed that any finite-index subgroup $\Gamma$ of $\operatorname{Mod}(\Sigma_g)$ containing the Torelli group satisfies $H_1(\Gamma; \Q) = 0$. The \emph{handlebody Torelli group} is the intersection $\mathcal{HI}_g := \mathcal{H}_g \cap \mathcal{I}_g$. Equivalently, $\mathcal{HI}_g$ is the subgroup of $\mathcal{H}_g$ that acts trivially on $H_1(\Sigma_g; \Z)$. We have the following theorem.

\begin{maintheorem}[Subgroups containing the handlebody Torelli group] \label{maintheorem:torelli}
     For $g \geq 3$, let $\Gamma$ be a finite-index subgroup of $\mathcal{H}_g$ that contains the handlebody Torelli group $\mathcal{HI}_g$. Then $H_1(\Gamma; \Q) = 0$.
\end{maintheorem}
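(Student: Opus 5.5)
Because $\Gamma \supseteq \mathcal{HI}_g$ has finite index, it is the preimage of a finite-index subgroup $\overline{\Gamma}$ of the image of $\mathcal{H}_g$ under the symplectic representation. Mapping classes that extend over $\mathcal{V}_g$ preserve the Lagrangian $L = \ker(H_1(\Sigma_g;\Z) \to H_1(\mathcal{V}_g;\Z))$. Hence this image is the stabilizer of $L$ in $\operatorname{Sp}_{2g}(\Z)$, namely the group of block matrices
\[
\begin{pmatrix} A & B \\ 0 & (A^{T})^{-1} \end{pmatrix}, \qquad A \in \operatorname{GL}_g(\Z),\ A^{-1}B \text{ symmetric},
\]
which is $\operatorname{Sym}_g(\Z) \rtimes \operatorname{GL}_g(\Z)$. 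The surjectivity onto this group is classical: handlebody homeomorphisms realize all of $\operatorname{GL}_g(\Z)$, and meridian twists realize symmetric matrices. We have an exact sequence $1 \to \mathcal{HI}_g \to \Gamma \to \overline{\Gamma} \to 1$, so the five-term sequence gives
\[
H_1(\mathcal{HI}_g;\Q)_{\overline\Gamma} \to H_1(\Gamma;\Q) \to H_1(\overline{\Gamma};\Q) \to 0 .
\]
It therefore suffices to show two things: $H_1(\overline{\Gamma};\Q)=0$, and the image of $H_1(\mathcal{HI}_g;\Q)$ in $H_1(\Gamma;\Q)$ vanishes.

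\textbf{Arithmetic quotient.} Let $\overline\Gamma$ be finite index in $\operatorname{Sym}_g(\Z)\rtimes \operatorname{GL}_g(\Z)$, and set $\Lambda = \overline\Gamma \cap \operatorname{GL}_g(\Z)$ and $N = \overline\Gamma\cap \operatorname{Sym}_g(\Z)$; both have finite index in their respective groups. Applying the Hochschild–Serre sequence to $N \to \overline\Gamma \to \overline\Gamma/N$ reduces the claim to two facts.

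First, $H_1(\overline\Gamma/N;\Q)=0$. The quotient $\overline\Gamma/N$ is commensurable with a finite-index subgroup of $\operatorname{GL}_g(\Z)$, and for $g\ge 3$ such subgroups have finite abelianization by Kazhdan's property (T) for $\operatorname{SL}_g(\Z)$, or by the congruence subgroup property.

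Second, $(N\otimes\Q)_{\Lambda}=0$. Here $N \otimes \Q \cong \operatorname{Sym}^2(\Q^g)$ (up to dual), an irreducible nontrivial rational representation of $\operatorname{GL}_g$. A finite-index subgroup $\Lambda$ is Zariski dense, so its coinvariants vanish.

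\textbf{Torelli part.} The main obstacle is the image of $H_1(\mathcal{HI}_g;\Q)$. The plan is to use generators of $\mathcal{HI}_g$. A convenient generating set, available from work of Omori or Putman-style arguments, consists of twists on separating meridians, bounding-pair maps on pairs of meridians, and perhaps twists on annulus-type curves / elements of the twist group. Each such generator has a power that is a meridian multitwist lying in $\Gamma$. For example, a meridian bounding pair map $T_{\gamma}T_{\delta}^{-1}$ is itself a meridian multitwist, and a separating meridian twist is a meridian twist. Theorem \ref{maintheorem:multitwistsdie} then kills these classes rationally in $H_1(\Gamma;\Q)$.

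Generators that are not meridian multitwists need separate treatment. For these, I would take $\overline\Gamma$-coinvariants of $H_1(\mathcal{HI}_g;\Q)$. I would use a handlebody analogue of Johnson's computation that $H_1(\mathcal{HI}_g;\Q)$ is, modulo the span of multitwist classes, an algebraic $\operatorname{GL}_g$-representation with no trivial summand, for example a piece of $\wedge^3 H$ restricted to $\operatorname{GL}_g$. The coinvariants of this piece under the Zariski-dense group $\Lambda$ vanish.

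The step I expect to be hardest is identifying enough of $H_1(\mathcal{HI}_g;\Q)$. If that fails, I would fall back on the following argument. The conjugates of meridian multitwists normally generate a subgroup of $\mathcal{HI}_g$ whose quotient injects into the Johnson-image piece. That piece has no $\Lambda$-invariant quotient for $g\ge3$, which again forces the image in $H_1(\Gamma;\Q)$ to be $0$.
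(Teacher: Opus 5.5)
Your five-term reduction and your proof that $H_1(\overline\Gamma;\Q)=0$ are fine. The Hochschild--Serre argument with $N$ and $\Lambda$ is, if anything, more careful than assuming $\overline\Gamma$ splits as a semidirect product.

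\textbf{The gap is the Torelli part, which is the heart of the theorem.} Omori's generators of $\mathcal{HI}_g$ are bounding pair annulus twists $T_xT_y^{-1}$ in which $x$ and $y$ are \emph{not} meridians. No power of such an element is a meridian multitwist, so Theorem~\ref{maintheorem:multitwistsdie} kills none of them. That theorem also needs $g\ge 4$, while the statement is for $g\ge 3$. Neither backup route closes the gap:
\begin{itemize}
\item The first route presupposes a description of $H_1(\mathcal{HI}_g;\Q)$ modulo multitwist classes that you do not have. Obtaining it would itself require controlling $H_1(\mathcal{HI}_g\cap\mathcal{T}_g;\Q)$.
\item The fallback is false as stated. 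All meridian multitwists, and hence their normal closure, lie in $\mathcal{T}_g=\ker(\mathcal{H}_g\to\operatorname{Out}(F_g))$. But $\mathcal{HI}_g$ surjects onto the Torelli subgroup of $\operatorname{Out}(F_g)$: lift an element and correct its symplectic image by meridian twists. That subgroup is nonabelian for $g\ge 3$, since two partial conjugations generate a free group. So $\mathcal{HI}_g$ modulo that normal closure has a nonabelian quotient and cannot inject into the abelian group $U$.
\end{itemize}

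\textbf{The missing idea} is to use the hypothesis $\Gamma\supseteq\mathcal{HI}_g$ to put an entire point-pushing subgroup inside $\Gamma$.
\begin{itemize}
\item Given $T_xT_y^{-1}$, choose a separating meridian $z$ cutting off a subsurface $T\cong\Sigma_h^1$ with $h\ge 2$ and $x\cup y\subset T$, such that $x$ and $y$ become isotopic after capping $z$.
\item Then $T_xT_y^{-1}$ lies in the kernel $\pi_1(U\Sigma_h)$ of the capping map $\mathcal{H}(T)\to\mathcal{H}(\overline T)$. This kernel acts trivially on $H_1(\Sigma_g)$ because $z$ separates, so the whole kernel lies in $\mathcal{HI}_g\subseteq\Gamma$.
\item This gives an extension $1\to\pi_1(U\Sigma_h)\to\Gamma_T\to\Gamma_{\overline T}\to 1$ whose kernel is all of $\pi_1(U\Sigma_h)$, not merely a finite-index subgroup of it.
\item Its five-term sequence reduces the problem to showing $H_1(\pi_1(U\Sigma_h);\Q)_{\Gamma_{\overline T}}\cong H_1(\Sigma_h;\Q)_{\Gamma_{\overline T}}$ vanishes. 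The fiber class is rationally trivial because the Euler class is nonzero.
\item These coinvariants vanish: $H_1(\Sigma_h;\Q)$ is the standard plus dual representation of $\operatorname{SL}_h$, and $\Gamma_{\overline T}$ contains a Zariski-dense finite-index subgroup of $\operatorname{SL}_h(\Z)$.
\end{itemize}
This is Lemma~\ref{lemma:BPMvanish}. It avoids Theorem~\ref{maintheorem:multitwistsdie} entirely and works for $g\ge 3$.
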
 

\begin{remark}
    The handlebody group also acts on $H_1(\mathcal{V}_g; \Z)$ and the kernel of this action is another Torelli subgroup, denoted $\mathcal{H}_B \mathcal{I}_g$. Since $\mathcal{H}_B \mathcal{I}_g$ contains both $\mathcal{T}_g$ and $\mathcal{HI}_g$, Theorem \ref{maintheorem:B} (for $g \geq 4$) and Theorem \ref{maintheorem:torelli} (for $g \geq 3$) independently tell us that any finite-index subgroup containing $\mathcal{H}_B \mathcal{I}_g$ has finite abelianization.
\end{remark}

\subsection{Subgroups containing large pieces of the handlebody Johnson kernel} Of infinite index in the Torelli group is the \emph{Johnson kernel} $\mathcal{K}_g$, the subgroup generated by Dehn twists about separating curves. Putman \cite[Theorem B]{Putman} strengthened the results of Hain and McCarthy by showing that any finite-index subgroup $\Gamma$ of $\operatorname{Mod}(\Sigma_g)$ containing a large piece of $\mathcal{K}_g$ has finite abelianization. Specifically, he showed that if the group generated by the set $\{ T_{\gamma}^n \in \Gamma \; | \; \gamma \text{ is a separating curve} \}$ is of finite index in $\Gamma \cap \mathcal{K}_g$, then $H_1(\Gamma; \Q) = 0$. 

Consider the \emph{handlebody Johnson kernel} $\mathcal{HK}_g := \mathcal{H}_g \cap \mathcal{K}_g$. If $\Gamma$ is a subgroup of $\mathcal{H}_g$, define $K(\Gamma)$ to be the subgroup of $\Gamma \cap \mathcal{HK}_g$ generated by the set $$\{ T_{\gamma}^n \in \Gamma \; | \; \gamma \text{ is a separating meridian} \}.$$ We have the following theorem.

\begin{maintheorem}[Subgroups containing large pieces of the handlebody Johnson kernel] \label{maintheorem:johnsonkernel}
    For $g \geq 4$, let $\Gamma$ be a finite-index subgroup of $\mathcal{H}_g$ for which $[\Gamma \cap \mathcal{HK}_g : K(\Gamma)] < \infty$. Then $H_1(\Gamma; \Q) = 0$.
\end{maintheorem}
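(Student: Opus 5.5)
The plan is to imitate Putman's proof of \cite[Theorem B]{Putman}, using Theorem \ref{maintheorem:B} as the model. We reduce the question to the quotient $\Gamma/(\Gamma\cap\mathcal{HK}_g)$ and show that the image of $\Gamma\cap\mathcal{HK}_g$ in $H_1(\Gamma;\Q)$ is zero.

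\textbf{Step 1: the image of $\Gamma\cap\mathcal{HK}_g$ vanishes.} By hypothesis, $K(\Gamma)$ has finite index in $\Gamma\cap\mathcal{HK}_g$. Hence every element of $\Gamma\cap\mathcal{HK}_g$ has a power lying in $K(\Gamma)$. The group $K(\Gamma)$ is generated by powers $T_\gamma^n\in\Gamma$ of twists about separating meridians. Each such power is a meridian multitwist, so it vanishes in $H_1(\Gamma;\Q)$ by Theorem \ref{maintheorem:multitwistsdie}. Since we work rationally, it follows that the image of $\Gamma\cap\mathcal{HK}_g$ in $H_1(\Gamma;\Q)$ is zero. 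This gives
\[
H_1(\Gamma;\Q)\cong H_1\bigl(\Gamma/(\Gamma\cap\mathcal{HK}_g);\Q\bigr).
\]

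\textbf{Step 2: reduction to the image of $\Gamma$.} The quotient $\Gamma/(\Gamma\cap\mathcal{HK}_g)$ is the image $\overline\Gamma$ of $\Gamma$ in $\mathcal{H}_g/\mathcal{HK}_g$, and $\overline\Gamma$ has finite index there. It therefore suffices to show that every finite-index subgroup of $\mathcal{H}_g/\mathcal{HK}_g$ has finite abelianization.

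\textbf{Step 3: the extension structure.} We use the filtration $\mathcal{HK}_g\subset\mathcal{HI}_g\subset\mathcal{H}_g$. Modulo $\mathcal{HK}_g$ this gives an extension
\[
1\to \mathcal{HI}_g/\mathcal{HK}_g\to \mathcal{H}_g/\mathcal{HK}_g\to \mathcal{H}_g/\mathcal{HI}_g\to 1.
\]
Here $\mathcal{H}_g/\mathcal{HI}_g$ is the arithmetic group of symplectic matrices preserving the Lagrangian $\ker(H_1(\Sigma_g)\to H_1(\mathcal{V}_g))$. The kernel $\mathcal{HI}_g/\mathcal{HK}_g$ injects via the Johnson homomorphism into $\wedge^3 H$ modulo $H$, so it is a finitely generated free abelian group, call it $A$. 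Set $\Lambda=\overline\Gamma\cap A$, which has finite index in $A$. A five-term exact sequence argument bounds $H_1(\overline\Gamma;\Q)$ by two pieces:
\begin{itemize}
\item the coinvariants $(\Lambda\otimes\Q)_{Q}$, where $Q$ is the image of $\overline\Gamma$ in $\mathcal{H}_g/\mathcal{HI}_g$;
\item $H_1(Q;\Q)$.
\end{itemize}
The second piece vanishes by Theorem \ref{maintheorem:torelli}, applied to the preimage of $Q$ in $\mathcal{H}_g$. That preimage is a finite-index subgroup containing $\mathcal{HI}_g$, so its $H_1$ with $\Q$ coefficients is zero. Since $\mathcal{HI}_g$ dies in that group's rational abelianization, $H_1(Q;\Q)=0$ as well.

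\textbf{Step 4: the coinvariants vanish (main obstacle).} I expect this to be the hardest step. The image of the handlebody Johnson homomorphism is a rational representation of the parabolic group $\mathcal{H}_g/\mathcal{HI}_g$, which is a semidirect product of $\mathrm{GL}_g(\Z)$ with symmetric matrices. It is not irreducible, so Borel density alone is not enough: one must check that no quotient representation is trivial. A finite-index subgroup $Q$ contains a finite-index subgroup of $\mathrm{SL}_g(\Z)$ and a lattice in the unipotent radical. For $g\ge 3$ the group $\mathrm{SL}_g(\Z)$ is Zariski dense in $\mathrm{SL}_g$, so the $Q$-coinvariants agree with the $\mathrm{SL}_g$-coinvariants of $\wedge^3 H_\Q/H_\Q$ restricted to the image. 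I would decompose $H_\Q=L\oplus L^*$ as $\mathrm{GL}_g$-modules. This writes the image as a sum of pieces of the form $\wedge^a L\otimes\wedge^b L^*$, and none of these contains a trivial $\mathrm{SL}_g$-summand once $g\ge 4$; the hypothesis $g\ge4$ is used here. The unipotent radical additionally mixes these pieces, which only helps kill coinvariants. Concretely, I would compute the image of $\mathcal{HI}_g$ using known descriptions of the handlebody Johnson image, then verify that there are no nonzero invariant linear functionals. This gives $(\Lambda\otimes\Q)_Q=0$, and so $H_1(\Gamma;\Q)=0$.
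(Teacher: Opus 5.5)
Your proposal is correct and follows the paper's proof. Step 1 is the paper's five-term argument using Theorem \ref{maintheorem:multitwistsdie}, and Steps 3--4 reproduce Lemma \ref{lemma:Qg}: the extension by $\mathcal{HI}_g/\mathcal{HK}_g$, vanishing of $H_1$ of the image in $\Psi(\mathcal{H}_g)$, and vanishing of coinvariants via Zariski density of a finite-index subgroup of $\operatorname{SL}_g(\Z)$. The only differences are minor: you get $H_1(Q;\Q)=0$ from Theorem \ref{maintheorem:torelli} instead of directly from Proposition \ref{proposition:urSp}, and you exclude trivial $\operatorname{SL}_g$-constituents in all of $\bigwedge^3 H_\Q/H_\Q$ for $g\ge 4$, whereas the paper uses Morita's identification with $U$ and the LiE decomposition (your version is slightly more robust, since it does not need the exact image).
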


It is natural to conjecture the following.

\begin{conjecture*}[Conjecture \ref{conjecture:rationalabelHI}] 
    The handlebody Johnson kernel $\mathcal{HK}_g$ is generated by separating meridian twists.
\end{conjecture*}

If Conjecture \ref{conjecture:rationalabelHI} is true and $\Gamma$ contains $\mathcal{HK}_g$, then $K(\Gamma) = \mathcal{HK}_g$. In this case, we get the following corollary which strengthens Theorem \ref{maintheorem:torelli}.

\begin{corollary*}[Corollary \ref{corollary:johnsonkernel}]
     Assume Conjecture \ref{conjecture:rationalabelHI}. For $g \geq 4$, let $\Gamma$ be a finite-index subgroup of $\mathcal{H}_g$ that contains the handlebody Johnson kernel $\mathcal{HK}_g$. Then $H_1(\Gamma; \Q) = 0$.
\end{corollary*}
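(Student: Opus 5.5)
The plan is to reduce the corollary directly to Theorem \ref{maintheorem:johnsonkernel} by checking that its hypothesis holds with index exactly $1$. Since $\Gamma \supseteq \mathcal{HK}_g$, we have $\Gamma \cap \mathcal{HK}_g = \mathcal{HK}_g$. So it suffices to show $K(\Gamma) = \mathcal{HK}_g$.

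First, $K(\Gamma) \subseteq \Gamma \cap \mathcal{HK}_g = \mathcal{HK}_g$ holds by definition, since $K(\Gamma)$ was defined as a subgroup of $\Gamma \cap \mathcal{HK}_g$. (Concretely, a power $T_\gamma^n$ of a twist about a separating meridian $\gamma$ extends to $\mathcal{V}_g$, because $\gamma$ bounds a disk. It also lies in $\mathcal{K}_g$, because $\gamma$ is separating.)

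For the reverse inclusion, let $\gamma$ be any separating meridian. As just noted, $T_\gamma \in \mathcal{HK}_g \subseteq \Gamma$. Hence $T_\gamma = T_\gamma^1$ is one of the generators in the set defining $K(\Gamma)$. By Conjecture \ref{conjecture:rationalabelHI}, $\mathcal{HK}_g$ is generated by such twists $T_\gamma$, so $\mathcal{HK}_g \subseteq K(\Gamma)$.

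Therefore $[\Gamma \cap \mathcal{HK}_g : K(\Gamma)] = 1 < \infty$. Since $g \geq 4$ and $\Gamma$ has finite index in $\mathcal{H}_g$, Theorem \ref{maintheorem:johnsonkernel} gives $H_1(\Gamma;\Q) = 0$.

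There is no real obstacle here: all the substantive work lies in Theorem \ref{maintheorem:johnsonkernel} and in the assumed conjecture. The only point needing care is the inclusion $K(\Gamma) \subseteq \mathcal{HK}_g$, that is, checking that separating meridian twists really do lie in $\mathcal{H}_g \cap \mathcal{K}_g$.
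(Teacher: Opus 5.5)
Your proof is correct and matches the paper's argument: under the conjecture, $\Gamma \supseteq \mathcal{HK}_g$ forces $K(\Gamma) = \mathcal{HK}_g = \Gamma \cap \mathcal{HK}_g$, so the hypothesis of Theorem~\ref{maintheorem:johnsonkernel} holds with index $1$. You also correctly spell out a check the paper leaves implicit: each separating meridian twist lies in $\mathcal{HK}_g \subseteq \Gamma$, so it is itself one of the generators of $K(\Gamma)$.
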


\subsection{Future work} There are results for $\operatorname{Mod}(\Sigma_g)$ for which we do not have analogues for $\mathcal{H}_g$. See, in particular, the work of Ershov--He \cite[Theorem 1.9]{ErshovHe} and Putman--Wieland \cite{PutmanWieland}. In future work, it would be interesting to study $\mathcal{H}_g$ from these points of view.

\subsection{Acknowledgments} I would like to thank Filippo Bianchi for introducing me to this problem and for many discussions that greatly influenced this project. I thank Andrew Putman for guidance and helpful conversations, especially regarding the proof of Lemma \ref{lemma:BPMvanish}. I also thank Akash Narayanan, Audriana Houtz, and Kenz Kallal for helpful discussions.

\section{Handlebody groups} A genus $g$ handlebody $\mathcal{V}_g$ is an oriented 3-manifold with $\partial \mathcal{V}_g = \Sigma_g$ obtained from a 3-ball by attaching $g$ one-handles. We denote by $\mathcal{V}_g^b$ a handlebody with $b$ embedded disks (``spots") in its boundary. We slightly abuse notation and define the boundary of the spotted handlebody $\mathcal{V}_g^b$ to be a surface with $b$ boundary components, $\Sigma_g^b$. Then $\Mod(\Sigma_g^b)$ is the group of isotopy classes of diffeomorphisms of $\Sigma_g^b$ that fix the boundary components pointwise, and the handlebody group $\mathcal{H}_g^b$ is the subgroup of $\Mod(\Sigma_g^b)$ consisting of mapping classes that extend to $\mathcal{V}_g^b$.

\subsection{Birman exact sequence} \label{section:birman} The groups $\mathcal{H}_g^b$ and $\mathcal{H}_g^{b+1}$ are related in the following way. Fix a boundary component of $\Sigma_g^{b+1}$. There exists a ``forgetful map" $\mathcal{H}_g^{b+1} \rightarrow \mathcal{H}_g^b$ given by gluing a disk to this boundary component. Letting $U \Sigma_g^b$ be the unit tangent bundle of $\Sigma_g^b$, there is a subgroup of $\operatorname{Mod}(\Sigma_g^{b+1})$ isomorphic to $\pi_1(U \Sigma_g^b)$ that consists of maps that ``push the boundary component around curves." This subgroup lies in $\mathcal{H}_g^{b+1}$ (see \cite[Section 3]{Hensel}) and is exactly the kernel of the forgetful map. This is summarized in the Birman exact sequence: $$1 \rightarrow \pi_1(U \Sigma_g^b) \rightarrow \mathcal{H}_g^{b+1} \rightarrow \mathcal{H}_g^b \rightarrow 1.$$

\subsection{Multitwists in the handlebody group} There are many straightforward tests for determining whether a mapping class belongs to $\mathcal{H}_g^b$. See \cite[Corollary 5.11]{Hensel}. For this paper, the following proposition will suffice.

\begin{proposition}[{\cite[Theorem 1.11]{Oertel}}, {\cite[Theorem 1]{McCullough}}] \label{prop:multitwistmembership}
    Suppose $\gamma_1, \dots, \gamma_k$ are disjoint simple closed curves on $\Sigma_g^b$. Then the multitwist $$T_{\gamma_1}^{n_1} \dots T_{\gamma_k}^{n_k}$$ lies in $\mathcal{H}_g^b$ if and only if, up to reordering, we have: \begin{itemize}
        \item The curves $\gamma_1, \dots, \gamma_r$ each bound embedded disks in $\mathcal{V}_g^b$; and
        \item For all $i = r+2, r+4, \dots, r+2\ell$ where $r+2\ell = k$, the union $\gamma_{i-1} \cup \gamma_i$ bounds an annulus embedded in $\mathcal{V}_g^b$, and $n_{i-1} = -n_i$.
    \end{itemize}
\end{proposition}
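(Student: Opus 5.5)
The plan is to prove the two directions separately. The ``if'' direction is a direct construction. The ``only if'' direction I would treat through the action on $\pi_1$, using Dehn's lemma and the loop theorem. Throughout I use that $T_{\gamma_1}^{n_1}\cdots T_{\gamma_k}^{n_k}$ is a product of commuting twists about disjoint curves. So it suffices to extend each factor, or each pair of factors, separately, and these extensions can be chosen with disjoint supports in $\mathcal{V}_g^b$.

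\emph{Sufficiency.} Suppose $\gamma_i$ bounds an embedded disk $D_i\subset\mathcal{V}_g^b$. Take a product neighborhood $D_i\times[0,1]$. Rotate each disk $D_i\times\{t\}$ by the angle $2\pi n_i t$, tapered to the identity near $\partial D_i\times[0,1]$ away from the boundary surface. This is a diffeomorphism of $\mathcal{V}_g^b$ whose restriction to $\Sigma_g^b$ is $T_{\gamma_i}^{n_i}$.

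Now suppose $\gamma_{i-1}\cup\gamma_i$ bounds an embedded annulus $A\subset\mathcal{V}_g^b$. Take $A\times[0,1]$ and twist the annulus $A\times\{t\}$ by the angle $2\pi n t$ in its core direction. On $\partial\mathcal{V}_g^b$ this restricts to a twist about $\gamma_{i-1}$ and a twist about $\gamma_i$. Because the two boundary circles of $A$ inherit opposite orientations from $A$, the exponents are $n$ and $-n$, which gives $T_{\gamma_{i-1}}^{n}T_{\gamma_i}^{-n}$. The only thing to check carefully here is this sign convention, which is local.

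\emph{Necessity.} Assume the multitwist $f$ extends to $\mathcal{V}_g^b$, and discard the curves with $n_j=0$.

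First, I reduce away the meridians. Each curve $\gamma_j$ bounding a disk contributes a factor that already extends, by the sufficiency step. Multiplying by the inverses of these factors, I may assume no $\gamma_j$ bounds a disk. Cutting $\mathcal{V}_g^b$ along these disks produces smaller (possibly disconnected) spotted handlebodies, and in them I can argue by induction on complexity.

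Next, I translate the condition into group theory. Let $N=\ker(\pi_1\Sigma_g^b\to\pi_1\mathcal{V}_g^b)$. A mapping class extends to $\mathcal{V}_g^b$ if and only if it preserves $N$; this follows from Dehn's lemma together with the fact that handlebodies are $K(\pi,1)$'s. Cutting $\Sigma_g^b$ along the $\gamma_j$ expresses $\pi_1\Sigma_g^b$ as a graph of groups with cyclic edge groups $\langle\gamma_j\rangle$. In this splitting, $f$ acts as the Dehn-twist automorphism that multiplies by $\gamma_j^{n_j}$ across edge $j$.

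Then I exploit the disk set. Choose a meridian $\delta$ that meets the $\gamma_j$ essentially. Such a $\delta$ exists: otherwise every meridian is disjoint from $\bigcup\gamma_j$, and the curves $\gamma_j$ would have to be boundary-parallel in the complement of a full disk system, which one handles directly. For every $m$, the curve $f^m(\delta)$ is again a meridian. Compare $f^m(D)$ with $D$ by the standard innermost-disk/outermost-arc surgery. Since $f$ is a multitwist, the arcs of $f^m(\partial D)$ spiral $m n_j$ times around each $\gamma_j$. An outermost arc of $f^m(D)\cap D$ then cobounds, together with an arc of $\partial D$, a disk whose boundary runs many times around the $\gamma_j$'s. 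Applying the loop theorem to this configuration should produce either a compressing disk for some $\gamma_j$, which is excluded after the first reduction, or an essential annulus in $\mathcal{V}_g^b$ whose boundary consists of two of the $\gamma_j$'s. In the second case the element $\gamma_{i-1}\gamma_i^{\pm1}$ lies in $N$. Preservation of $N$ then forces the twist exponents to cancel: $n_{i-1}=-n_i$. Concretely, I would check this by applying $f$ to a loop in $N$ that crosses the annulus once and asking that the image stay in $N$.

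Finally, I remove the pair $\gamma_{i-1},\gamma_i$ by multiplying by the inverse annulus twist and induct on the number of curves.

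The main obstacle is this last surgery step. It must show that the spiralling forced by $f^m$ produces, via the loop theorem, exactly disks and annuli among the $\gamma_j$, and that these annuli pair up the curves disjointly. That pairing is the real content of the Oertel and McCullough theorem. If the direct surgery becomes unwieldy, the fallback is Masur's description of the limit set of the disk set in $\mathcal{PML}$. In that approach, the projective limit of $f^m(\delta)$ is the weighted multicurve $\sum_j |n_j|\, i(\delta,\gamma_j)\,\gamma_j$, which lies in the closure of the disk set. Masur's characterization of such multicurves, as containing a meridian or containing a pair of curves cobounding an annulus, would then drive the same induction.
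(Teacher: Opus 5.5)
The paper gives no proof of this proposition; it is quoted from Oertel and McCullough. Your sufficiency direction is fine: twist along a product neighborhood of the disk, or of the annulus, and the two collars of the annulus carry opposite boundary orientations, which gives the opposite exponents. The necessity direction, however, has a genuine gap. The step that carries all the content is only asserted. You say that an outermost-arc and loop-theorem analysis of $f^m(D)\cap D$ ``should produce'' either a compressing disk for some $\gamma_j$ or an annulus in $\mathcal{V}_g^b$ whose boundary consists of two of the $\gamma_j$. That is exactly McCullough's theorem. Nothing in your sketch controls which surfaces the loop theorem returns, or why their boundaries lie among the $\gamma_j$. The Masur limit-set fallback depends on a characterization of the closure of the disk set that you neither prove nor state precisely, and even then it would only give this existence statement.

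More seriously, your next deduction is false: ``$\gamma_{i-1}\cup\gamma_i$ cobound an annulus $A$, so $f(N)=N$ forces $n_{i-1}=-n_i$.'' The pairing you induct on fails with it. A loop $\ell\in N$ bounds a singular disk whose preimage of $A$ consists of arcs, so $\ell$ has zero algebraic intersection with $\partial A$. Hence a loop in $N$ that crosses $\gamma_{i-1}$ must cross every other $\gamma_k$ related to it by annuli equally often, and the twisting picked up at those curves can cancel the twisting at $\gamma_{i-1}$.

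Here is a concrete counterexample. Attach two $1$-handles to a solid torus $W$ to get $\mathcal{V}_3$. Let $c_1,c_2,c_3$ be parallel longitudes on $\partial W$, and place the four feet of the handles so that each of the three annuli of $\partial W\setminus(c_1\cup c_2\cup c_3)$ contains a foot. Then:
\begin{enumerate}
\item No complementary region in $\Sigma_3$ is an annulus, so the $c_i$ are pairwise non-isotopic.
\item None of the $c_i$ is a meridian, since each represents a primitive element of $F_3$.
\item Any two of them cobound a properly embedded annulus: push an annulus of $\partial W$ between them into the interior of $W$.
\end{enumerate}
Hence $T_{c_1}T_{c_2}T_{c_3}^{-2}=(T_{c_1}T_{c_3}^{-1})(T_{c_2}T_{c_3}^{-1})\in\mathcal{H}_3$. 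Yet three non-meridian curves cannot be split into disjoint pairs with opposite exponents, and in particular $n_1=n_2=1$ even though $c_1\cup c_2$ bounds an annulus. So the ``only if'' direction, read literally, is too strong, and no argument can establish the pairing.

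What your framework can prove, once McCullough's existence theorem is in hand, is the correct version. Instead of removing a pair, multiply by $(T_{\gamma_{i-1}}T_{\gamma_i}^{-1})^{-n_{i-1}}$, which eliminates $\gamma_{i-1}$ and transfers its exponent to $\gamma_i$. The induction then shows that the multitwist is a product of twists about the meridians among the $\gamma_j$ and annulus twists about pairs of the $\gamma_j$.
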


Proposition \ref{prop:multitwistmembership} gives us the following two important examples of elements in $\mathcal{H}_g^b$. When $\gamma$ bounds a disk in the handlebody, we call $\gamma$ a \emph{meridian} and we call $T_{\gamma}$ a \emph{meridian twist}. When the curves $\gamma_1$ and $\gamma_2$ bound an annulus in the handlebody, we call the product $T_{\gamma_1} T_{\gamma_2}^{-1}$ an \emph{annulus twist}.

\subsection{Actions on fundamental groups} \label{section:actionpi1}
The handlebody group $\mathcal{H}_g$ acts on $\pi_1(\mathcal{V}_g) \cong F_g$ via outer automorphisms, giving a map to $\operatorname{Out}(F_g)$. Griffiths \cite{Griffiths} proves that this map is a surjection. Luft \cite{Luft} proves that the kernel is the twist group $\mathcal{T}_g$: the subgroup of $\mathcal{H}_g$ generated by meridian twists. This is summarized in the following short exact sequence: $$1 \rightarrow \mathcal{T}_g \rightarrow \mathcal{H}_g \rightarrow \operatorname{Out}(F_g) \rightarrow 1.$$

\section{Group homology}

We are interested in the group homology of handlebody groups and their subgroups. A good source on group homology is \cite{Brown}, and an important result in group homology is the five-term exact sequence, which we introduce in the following subsection.

\subsection{Five-term exact sequence} Consider a short exact sequence of groups $$1 \rightarrow K \rightarrow G \rightarrow Q \rightarrow 1.$$ The group $G$ acts on its normal subgroup $K$ by conjugation, inducing an action on $H_1(K; R)$ for any ring $R$. Since this action is trivial when restricted to $K$, it descends to an action of $G / K \cong Q$ on $H_1(K;R)$. We define the coinvariants of this action, denoted $H_1(K;R)_Q$, to be the quotient of $H_1(K;R)$ by the subgroup spanned by $\{ k - q(k) \; | \; k \in H_1(K;R), \; q \in Q \}$. Equivalently, this is the largest quotient of $H_1(K; R)$ on which $Q$ acts trivially. Associated to our short exact sequence is the five-term exact sequence: $$H_2(G; R) \rightarrow H_2(Q;R) \rightarrow H_1(K;R)_Q \rightarrow H_1(G;R) \rightarrow H_1(Q;R) \rightarrow 0.$$ See \cite[Corollary VII.6.4]{Brown}.

\subsection{Abelianization of the handlebody group} In the following proposition, we address the abelianization of the handlebody group with at most two embedded disks.

\begin{proposition} \label{proposition:H1handlebody}
     For all $b \in \{0,1,2 \}$, we have: \[ H_1(\mathcal{H}_g^b; \Z) \cong \begin{cases}
     \Z \oplus \sfrac{\Z}{2\Z} & g=1 \\
     \Z \oplus \sfrac{\Z}{2\Z} \oplus \sfrac{\Z}{2\Z} & g=2 \\
     \sfrac{\Z}{2\Z} & g \geq 3 \\
     \end{cases} .
     \]
\end{proposition}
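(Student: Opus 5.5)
The plan is to reduce to the closed case $b=0$, where the abelianization is known, and then show that each Birman exact sequence of Section~\ref{section:birman} contributes nothing new to $H_1$.

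\textbf{The case $b=0$.} For $g\ge 1$, Wajnryb gives a finite presentation of $\mathcal{H}_g$ and computes its abelianization. The values are $\Z\oplus\Z/2$ for $g=1$, $\Z\oplus\Z/2\oplus\Z/2$ for $g=2$, and $\Z/2$ for $g\ge3$. We cite this result. As a sanity check for $g\ge3$, use the sequence $1\to\mathcal{T}_g\to\mathcal{H}_g\to\operatorname{Out}(F_g)\to1$ from Section~\ref{section:actionpi1}. It shows that $H_1(\mathcal{H}_g)$ is an extension of $H_1(\operatorname{Out}(F_g))=\Z/2$ by the image of the meridian twists. All nonseparating meridian twists are conjugate in $\mathcal{H}_g$. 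When $g\ge3$, a lantern relation among meridians, with all seven curves chosen to bound disks (possible inside a genus-$3$ sub-handlebody), forces that common class to vanish. Separating meridian twists are products of nonseparating ones by the chain relation or lantern relation, so they vanish too.

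\textbf{Induction on $b$.} For $b\in\{0,1\}$, apply the five-term exact sequence to
\[1\to\pi_1(U\Sigma_g^b)\to\mathcal{H}_g^{b+1}\to\mathcal{H}_g^b\to1.\]
This gives the exact sequence
\[H_2(\mathcal{H}_g^{b+1})\to H_2(\mathcal{H}_g^b)\to H_1(\pi_1(U\Sigma_g^b))_{\mathcal{H}_g^b}\to H_1(\mathcal{H}_g^{b+1})\to H_1(\mathcal{H}_g^b)\to0.\]
The sequence splits: capping the new boundary component has a section, obtained by embedding $\Sigma_g^b$ into $\Sigma_g^{b+1}$ compatibly with the handlebody. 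Because of the splitting, $H_2(\mathcal{H}_g^{b+1})\to H_2(\mathcal{H}_g^b)$ is surjective. Hence $H_1(\mathcal{H}_g^{b+1})\cong H_1(\mathcal{H}_g^b)\oplus C$, where $C$ is the image of the coinvariants. The task is therefore to show that this image $C$ is zero.

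\textbf{Main obstacle: killing the point-pushing classes.} $H_1(\pi_1(U\Sigma_g^b))$ is generated by two kinds of classes. The first is the fiber class, which is the boundary twist $T_\partial$. The second is the pushes along simple closed curves $\alpha$, each equal to $T_{\alpha_1}T_{\alpha_2}^{-1}$ for the two boundary curves of a neighborhood of $\alpha$ and the new spot.

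\emph{Boundary twist.} The new boundary curve bounds a disk in the spotted handlebody, so $T_\partial$ is a separating meridian twist. Its class in $H_1(\mathcal{H}_g^{b+1})$ vanishes by the same lantern argument, applied to meridians in the complement. For $g=1,2$ we instead check it directly against the known generators.

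\emph{Push along a meridian $\alpha$.} Both $\alpha_1$ and $\alpha_2$ are meridians. In the first homology of $\mathcal{H}_g^{b+1}$ each twist equals the common class of nonseparating meridian twists, so the difference vanishes when $g\ge3$.

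\emph{Push along a non-meridian $\alpha$.} Here $T_{\alpha_1}T_{\alpha_2}^{-1}$ is an annulus twist. Since $H_1(U\Sigma_g^b)$ is spanned by the fiber together with lifts of a basis, I will choose the basis curves to be meridians and longitudes. For a longitude $\alpha$, I will exhibit an element of $\mathcal{H}_g^b$ (say a handle slide) that acts on $H_1$ by sending $[\alpha]\mapsto[\alpha]+[\mu]$ with $\mu$ a meridian. Then in the coinvariants $[\alpha]\equiv[\alpha]+[\mu]$, so every class reduces to meridian pushes and the fiber, both of which are already dead.

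In low genus I expect $C=0$ as well. This should follow because the coinvariants of $H_1(\Sigma_g)$ under the handlebody action are spanned by the images of longitudes, and these are killed the same way. I expect this step, organizing the coinvariant computation so that it works uniformly including $g=1,2$, to be the delicate part.
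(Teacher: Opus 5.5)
\textbf{The longitude step does not work.} Your framework is the one the paper uses for $b=2$: the five-term sequence for the Birman extension, followed by showing that the image $C$ of the coinvariants in $H_1(\mathcal{H}_g^{b+1})$ vanishes. Your treatment of the fiber class (a lantern with the new boundary curve and nonseparating meridians) and of meridian pushes (conjugacy of nonseparating meridian twists) is fine.

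The gap is the step for pushes along longitudes, which runs backwards. Suppose $h\in\mathcal{H}_g^b$ sends $[\alpha]\mapsto[\alpha]+[\mu]$. Then the coinvariant relation $[\alpha]\equiv h([\alpha])$ gives $[\mu]\equiv 0$. It kills the \emph{meridian} class, exactly as the paper kills $\tilde b_i$ using $T_{\mathfrak b_i}(\tilde a_i)=\tilde a_i+\tilde b_i$. It says nothing about $[\alpha]$. So ``every class reduces to meridian pushes and the fiber'' does not follow, and the longitude lifts $\tilde a_i$ are never shown to die.

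Since $\mathcal{H}_g$ preserves the meridian span $\ker\bigl(H_1(\Sigma_g)\to H_1(\mathcal{V}_g)\bigr)$, a longitude class can only appear in some $x-h(x)$ through the $\operatorname{GL}_g(\Z)$-part of the action on $H_1(\mathcal{V}_g)$. You need an element acting on a \emph{second} longitude by an elementary matrix. The paper's $h_i=T_{\mathfrak c_i}T_{\mathfrak a_i}^{-1}T_{\mathfrak b_{i+1}}^{-1}$ does this: $h_i(\tilde a_{i+1})=\tilde a_i+\tilde a_{i+1}-z$. You must then also kill the correction class $z$ (the paper uses $h_1(\tilde b_1)=\tilde b_1-\tilde b_2+z$) and the last longitude (the paper uses a handle swap). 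All of this needs $g\ge 2$.

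\textbf{Two smaller points.} First, your splitting claim is false for $b=0$. The surface $\Sigma_g$ does not embed in $\Sigma_g^1$. Also, $\mathcal{H}_g$ has torsion (the hyperelliptic involution extends over $\mathcal{V}_g$), whereas $\mathcal{H}_g^1\subset\Mod(\Sigma_g^1)$ is torsion-free. The splitting is not needed anyway: once $C=0$, exactness of $C\to H_1(\mathcal{H}_g^{b+1})\to H_1(\mathcal{H}_g^b)\to 0$ already gives the isomorphism.

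Second, and more seriously, no ``direct check'' can give $C=0$ at $g=1$.
\begin{itemize}
\item For the torus, the kernel of $\mathcal{H}_1^1\to\mathcal{H}_1$ is only $\langle T_\partial\rangle$, not $\pi_1(U\Sigma_1)$.
\item $\mathcal{H}_1^1$ is the full stabilizer of the meridian $\mathfrak b$ in $\Mod(\Sigma_1^1)$.
\item That stabilizer is $\langle T_{\mathfrak b}\rangle\times\langle\rho\rangle\cong\Z^2$, where $\rho=(T_{\mathfrak a}T_{\mathfrak b})^3$ is central and $\rho^2=T_\partial$.
\end{itemize}
Hence $H_1(\mathcal{H}_1^1;\Z)\cong\Z^2$ and $[T_\partial]=2[\rho]\neq 0$, so the statement as written fails for $g=1$, $b\ge 1$. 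The paper's own $b=2$ argument likewise needs $g\ge 2$, and only $g\ge 3$ is used later. You should restrict to $g\ge 2$ and repair the longitude step as above.
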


In particular, $H_1(\mathcal{H}_g^b; \Q) = 0$ when $g \geq 3$ and $b \leq 2$. This result is already known for $b \in \{0,1\}$. See \cite[Lemma 2.4]{Ishida} and \cite[Remark 2.5]{Ishida}. For future use, we extend this result to mapping class groups of handlebodies with two embedded disks. 

\begin{proof}[Proof of Proposition \ref{proposition:H1handlebody}] Since the cases of $b \in \{0,1\}$ are known, it suffices to show the isomorphism $H_1(\mathcal{H}_g^2; \Z) \cong H_1(\mathcal{H}_g^1; \Z)$. The five-term exact sequence of the extension $$1 \rightarrow \pi_1(U\Sigma_g^1) \rightarrow \mathcal{H}_g^2 \rightarrow \mathcal{H}_g^1 \rightarrow 1$$ contains the exact sequence\footnote{We use the fact that $U \Sigma_g^1$ is a $K(\pi_1(U \Sigma_g^1),1)$.} $$H_1( U \Sigma_g^1; \Z)_{\mathcal{H}_g^1} \rightarrow H_1(\mathcal{H}_g^2; \Z) \rightarrow H_1(\mathcal{H}_g^1; \Z) \rightarrow 0.$$ We show that $H_1(U \Sigma_g^1; \Z)_{\mathcal H_g^1}=0$. Our argument follows that of \cite[Lemma 2.1]{Ishida}.

\begin{figure}[htbp]
    \centering
    \includegraphics[scale=.5]{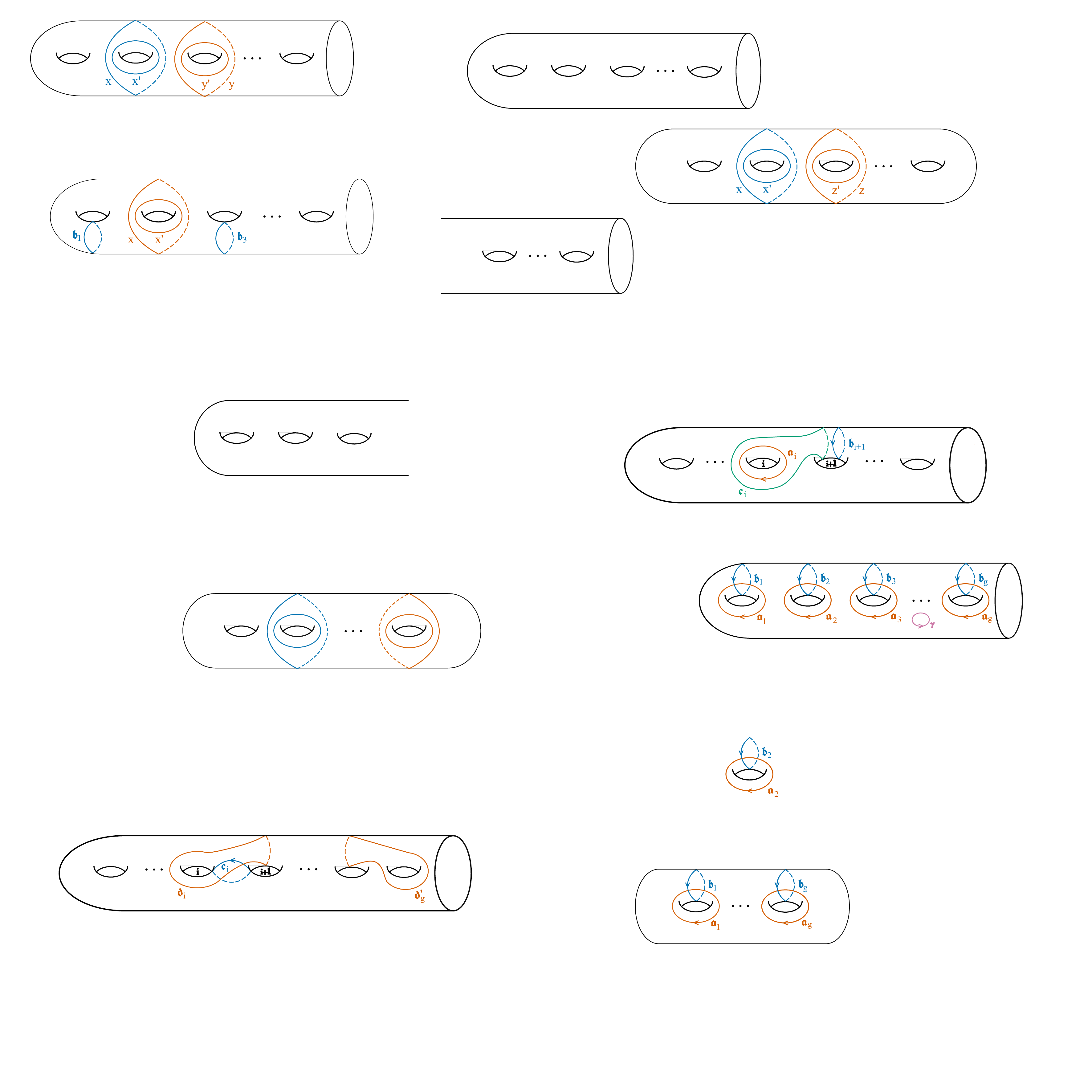}
    \caption{A basis for $H_1(\Sigma_g^1; \Z)$}
    \label{figure:homologybasis}
\end{figure}

A basis for $H_1(\Sigma_g^1;\Z)$ is $\{ a_1, \dots, a_g, b_1, \dots, b_g \}$, where Figure \ref{figure:homologybasis} shows curves such that $[\mathfrak{a}_i] = a_i$ and $[\mathfrak{b}_i] = b_i$. Figure \ref{figure:homologybasis} also shows a nullhomologous curve $\gamma$. The homology classes $a_i$ and $b_i$ lift to $\tilde{a}_i$ and $\tilde{b}_i$, respectively, in $H_1(U \Sigma_g^1; \Z)$, and $[\gamma]$ lifts to $z$. This gives us $$H_1(U \Sigma_g^1; \Z) = \Z \langle \tilde{a}_1, \dots, \tilde{a}_g, \tilde{b}_1, \dots, \tilde{b}_g, z \rangle. $$

We show that each of these generators vanishes in the quotient $H_1(U \Sigma_g^1; \Z)_{\mathcal{H}_g^1}$. First, we show that the classes of $\tilde{b}_1, \dots, \tilde{b}_g$ vanish in $H_1(U \Sigma_g^1; \Z)_{\mathcal{H}_g^1}$. For each $1 \leq i \leq g$, we have $$T_{\mathfrak{b}_i} (\tilde{a}_i) = \tilde{a}_i + \tilde{b}_i.$$ It follows that $$\tilde{a}_i - T_{\mathfrak{b}_i}(\tilde{a}_i) = -\tilde{b}_i$$ vanishes in the quotient.

\begin{figure}[htbp]
    \centering
    \includegraphics[scale=.5]{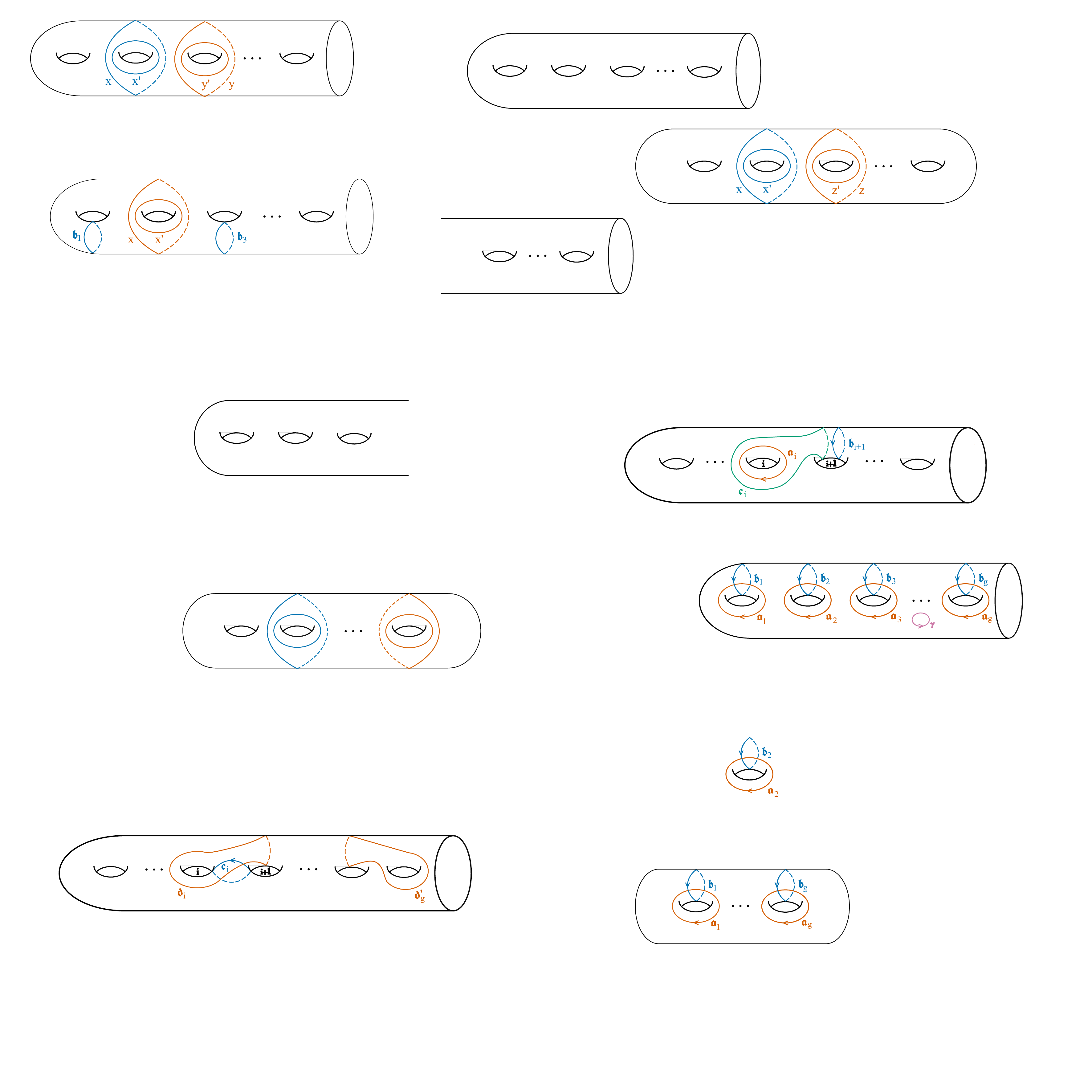}
    \caption{The curves involved in the map $h_i$}
    \label{figure:curvesforhimap}
\end{figure}
    
Now consider the curves shown in Figure \ref{figure:curvesforhimap}. For $i =1, \dots, g-1$, the mapping class $$h_i := T_{\mathfrak{c}_i} T_{\mathfrak{a}_i}^{-1} T_{\mathfrak{b}_{i+1}}^{-1}$$ is the product of an annulus twist and a disk twist, and so lies in $\mathcal{H}_g^1$ by Proposition \ref{prop:multitwistmembership}. We will use this map to show that the class of $z$ vanishes in $H_1(U \Sigma_g^1; \Z)_{\mathcal{H}_g^1}$. Applying left-handed Dehn twists, we have $$h_1(\tilde{b}_1) = \tilde{b}_1 - \tilde{b}_{2} + z,$$ and so $$\tilde{b}_1 - h_1(\tilde{b}_1) = \tilde{b}_{2} - z.$$ It follows that the class of $z$ is identified with the class of $\tilde{b}_2$ in the quotient, and so it vanishes.

Next, we show that the classes of $\tilde{a}_1, \dots, \tilde{a}_{g-1}$ vanish in $H_1(U \Sigma_g^1; \Z)_{\mathcal{H}_g^1}$. For $1 \leq i \leq g-1$, we have $$h_i(\tilde{a}_{i+1}) = \tilde{a}_i + \tilde{a}_{i+1} - z,$$ and so $$\tilde{a}_{i+1} - h_i(\tilde{a}_{i+1}) = -\tilde{a}_i + z.$$ It follows that the class of $\tilde{a}_i$ is identified with the class of $z$ in the quotient, and so it vanishes. 
    
It remains to show that the class of $\tilde{a}_g$ vanishes in $H_1(U \Sigma_g^1; \Z)_{\mathcal{H}_g^1}$. The symplectic automorphism that swaps $a_1 \leftrightarrow a_g$ and $b_1 \leftrightarrow b_g$ can be realized by an element $f \in \mathcal{H}_g^1$. See, for example, the element that swaps the $1^{\text{st}}$ and $g^{\text{th}}$ handles of the handlebody in \cite{Suzuki}. Acting on $H_1(U \Sigma_g^1; \Z)_{\mathcal{H}_g^1}$, we have $$f(\tilde{a}_1) = \tilde{a}_g + nz$$ for some $n \in \Z$. Then $\tilde{a}_g$ is identified with $\tilde{a}_1 -nz$ in the quotient, and so it vanishes. \end{proof}

\subsection{Finite-index subgroups} We have the following lemma. 

\begin{lemma} \label{lemma:finiteindexhomology}
    Let $G_2$ be a finite-index subgroup of $G_1$. Then, for all $k$, the map $$\iota_*: H_k(G_2; \Q) \rightarrow H_k(G_1; \Q)$$ induced by the inclusion is surjective.
\end{lemma}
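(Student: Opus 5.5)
The plan is to use the transfer (corestriction) map. Since $[G_1 : G_2] = n < \infty$, there is a transfer homomorphism
$$\tau: H_k(G_1; \Q) \rightarrow H_k(G_2; \Q)$$
such that the composition $\iota_* \circ \tau: H_k(G_1;\Q) \rightarrow H_k(G_1;\Q)$ is multiplication by $n$; see \cite[Proposition III.9.5]{Brown}. Because $n$ is invertible in $\Q$, multiplication by $n$ is an automorphism of the $\Q$-vector space $H_k(G_1;\Q)$. Hence for any class $x \in H_k(G_1;\Q)$ we have
$$x = \iota_*\big(\tau(\tfrac{1}{n}x)\big),$$
so $x$ lies in the image of $\iota_*$. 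This gives surjectivity for every $k$.

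The key steps, in order, are:
\begin{enumerate}
\item Define the transfer at the chain level. Take a projective resolution $F$ of $\Q$ over $\Q G_1$; it is also a projective resolution over $\Q G_2$. Fix coset representatives $g_1, \dots, g_n$ for $G_2 \backslash G_1$, and send $F \otimes_{G_1} \Q \rightarrow F \otimes_{G_2} \Q$ by $x \otimes 1 \mapsto \sum_i g_i x \otimes 1$.
\item Check that this map is well defined and is a chain map, i.e.\ independent of the choice of coset representatives.
\item Observe that composing with the projection $F \otimes_{G_2} \Q \rightarrow F \otimes_{G_1}\Q$ sends $x \otimes 1$ to $n\,(x\otimes 1)$.
\end{enumerate}

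There is no serious obstacle here; this is a standard fact. The only point needing care is the well-definedness in step 2, and that can be cited directly from Brown instead of being reproved. Alternatively, for $k=1$ alone, which is likely the case the paper actually uses, one can use the explicit transfer $G_1^{ab} \rightarrow G_2^{ab}$. Its composition with the map induced by inclusion sends $g \mapsto g^n$, and tensoring with $\Q$ gives the same conclusion.
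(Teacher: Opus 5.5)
Your proposal is correct and is essentially the paper's own argument: the transfer $t\colon H_k(G_1;\Q)\to H_k(G_2;\Q)$ satisfies $\iota_*\circ t = [G_1:G_2]$, which is invertible over $\Q$, so $\iota_*$ is surjective. The paper simply cites Brown for the transfer; your chain-level sketch (right coset representatives, $\bar x\mapsto\sum_i \overline{g_i x}$) and your $k=1$ variant are accurate but not needed.
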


\begin{proof}
    When $G_2$ is a finite-index subgroup of $G_1$, there exists a transfer map $$t: H_k(G_1; \Q) \rightarrow H_k(G_2; \Q)$$ with the property that $\iota_* \circ t$ is multiplication by $[G_1: G_2]$. See \cite[Proposition 9.5 (iii)]{Brown}. Thus, with coefficients in $\Q$, multiplication by $\frac{1}{[G_1:G_2]}$ is an inverse to $\iota_* \circ t$. It follows that $\iota_*$ is surjective. \end{proof}

\section{Proof of Theorem A}

In this section, we prove Theorem \ref{maintheorem:multitwistsdie}. We start with the following proposition, whose statement and proof are inspired by \cite[Theorem A]{Putman}.

\begin{proposition}[Powers of meridian twists vanish] \label{proposition:twistsvanish} For $g \geq 4$, let $\Gamma$ be a finite-index subgroup of $\mathcal{H}_g$. Let $\gamma$ be a meridian on $\Sigma_g$ and pick $n \geq 1$ such that $T_{\gamma}^n \in \Gamma$. Then the class $[T_{\gamma}^n] = 0$ in $H_1(\Gamma; \Q)$. \end{proposition}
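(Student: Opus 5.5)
Following Putman's proof of \cite[Theorem A]{Putman}, I will find a subsurface containing $\gamma$ whose handlebody group has vanishing rational abelianization (Proposition \ref{proposition:H1handlebody}), pass to a finite-index subgroup inside $\Gamma$, and transfer the vanishing to $\Gamma$ using Lemma \ref{lemma:finiteindexhomology} and a conjugation trick.

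\emph{Step 1: reduce to a nonseparating meridian.} If $\gamma$ is a separating meridian, it splits $\Sigma_g$ into pieces of genus $h$ and $g-h$. Since $g \geq 4$, I can use a lantern relation inside the handlebody to write a power of $T_\gamma$ as a product of meridian twists about nonseparating (or lower-complexity) meridians lying in $\Gamma$. Alternatively, I will run the same argument below with a subsurface adapted to $\gamma$. I will treat the nonseparating case as the main case.

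\emph{Step 2: choose a spotted subhandlebody.} For nonseparating $\gamma$, I will choose a separating meridian $\delta$, disjoint from $\gamma$, that cuts $\mathcal{V}_g$ into $\mathcal{V}_{h}^1$ and $\mathcal{V}_{g-h}^1$, with $\gamma$ on the side of genus $h \geq 3$. Because $g \geq 4$, I will try $h=3$ on one side and genus $\geq 1$ on the other. Alternatively, I will cut along a pair of disjoint meridians to obtain a $\mathcal{V}_3^2$ containing $\gamma$, which is why the case $b=2$ appears in Proposition \ref{proposition:H1handlebody}. Mapping classes supported on this subhandlebody give a homomorphism $\mathcal{H}_3^b \to \mathcal{H}_g$, and $T_\gamma$ lies in the image. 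Let $G$ be the preimage of $\Gamma$. Then $G$ has finite index in $\mathcal{H}_3^b$.

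\emph{Step 3: the key obstacle.} Here $G$ is only of finite index, and $H_1(G;\Q)$ need not vanish. Lemma \ref{lemma:finiteindexhomology} only says that $H_1(G;\Q) \to H_1(\mathcal{H}_3^b;\Q)=0$ is surjective, which goes in the wrong direction. Putman's trick resolves this. Using the transfer, the image of $[T_\gamma^n]$ in $H_1(\Gamma;\Q)$ is unchanged, up to a nonzero scalar, by replacing it with the average of its conjugates under coset representatives of $G$ in $\mathcal{H}_3^b$. Concretely, I will consider the finite-index normal core $N \subseteq G$ and the map $H_1(N;\Q) \to H_1(\Gamma;\Q)$, which factors through the coinvariants $H_1(N;\Q)_{\mathcal{H}_3^b}$. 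These coinvariants equal $H_1(\mathcal{H}_3^b;\Q)=0$ by the transfer, since $H_1(N;\Q)_{Q}$ for finite $Q$ is a summand of, and surjects onto, $H_1(\mathcal{H}_3^b;\Q)$ modulo torsion contributions that vanish rationally.

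The subtlety is that the conjugating elements lie in $\mathcal{H}_3^b$ but not in $\Gamma$, so conjugation does not act on $H_1(\Gamma;\Q)$. To fix this, I will pass to a power $T_\gamma^{nm} \in N$ and use the fact that $[T_\gamma^{nm}] = m[T_\gamma^n]$, so vanishing for a multiple suffices. I will then show that $N$'s image of the class in $H_1(\Gamma;\Q)$ equals the image of an averaged class, using that conjugates of $T_\gamma^{nm}$ by elements of $\mathcal{H}_3^b \cap \Gamma$-cosets are $\Gamma$-conjugate after choosing representatives in $G$. Making this averaging rigorous is the main obstacle. Following Putman, I expect to handle it by showing that $T_\gamma^{nm}$, as an element of $H_1(N;\Q)$, is fixed by $\mathcal{H}_3^b$ up to classes already killed in $\Gamma$. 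I expect this to use that all nonseparating meridians in the subhandlebody are equivalent under $G$-translates up to finitely many orbits.
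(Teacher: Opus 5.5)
There is a genuine gap, and it sits exactly where you flag it in Step 3. Only elements of $G$ map into $\Gamma$, so the map $H_1(N;\Q)\to H_1(\Gamma;\Q)$ factors through $H_1(N;\Q)_G\cong H_1(G;\Q)$. It does not factor through $H_1(N;\Q)_{\mathcal{H}_3^b}\cong H_1(\mathcal{H}_3^b;\Q)=0$. Passing to the normal core therefore just returns you to $H_1(G;\Q)$, which you already observed need not vanish.

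What the transfer genuinely gives is $\sum_j [T_{f_j(\gamma)}^{nm}]=0$ in $H_1(\Gamma;\Q)$, where $f_j$ runs over coset representatives of $N$ in $\mathcal{H}_3^b$. The curves $f_j(\gamma)$ fall into finitely many $\Gamma$-orbits, so this is one linear relation among finitely many orbit classes, not the vanishing of $[T_\gamma^{nm}]$. Your proposed fix (that the class is $\mathcal{H}_3^b$-invariant modulo classes already killed in $\Gamma$) is equivalent to the statement you are trying to prove.

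In fact no argument of the shape ``the ambient group has vanishing rational $H_1$, now average'' can work. The group $\Mod(\Sigma_1)\cong\operatorname{SL}_2(\Z)$ has finite abelianization, yet the square of a Dehn twist is a free generator of $\Gamma(2)/\{\pm I\}\cong F_2$, hence nonzero in $H_1(\Gamma(2);\Q)$. Some property specific to $T_\gamma$ has to enter.

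The missing idea is to make $\gamma$ a \emph{boundary} curve of the subsurface rather than an interior one. The paper takes $S$ to be the complement of $\gamma$, so $S\cong\Sigma_{g-1}^2$, or a complementary side of $\gamma$ when $\gamma$ separates. This, not a pair of auxiliary meridians, is where $b=2$ and the hypothesis $g\geq 4$ come from. Then $T_\gamma$ is a boundary twist $T_\beta$, central in $\mathcal{H}(S)$. Capping $\beta$ (with a marked point) gives central extensions $1\to\Z\to\mathcal{H}(S)\to\mathcal{H}(\overline{S})\to 1$ and $1\to\Z\to\Gamma_S\to\Gamma_{\overline{S}}\to 1$, with kernels generated by $T_\beta$ and $T_\beta^n$. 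The five-term exact sequences then give the following chain:
\begin{itemize}
\item $[T_\beta]=0$ in $H_1(\mathcal{H}(S);\Q)$ says that $H_2(\mathcal{H}(\overline{S});\Q)\to\Q$ is onto;
\item Lemma~\ref{lemma:finiteindexhomology}, used in degree $2$ where surjectivity is exactly what helps, says $H_2(\Gamma_{\overline{S}};\Q)\to H_2(\mathcal{H}(\overline{S});\Q)$ is onto;
\item by naturality $H_2(\Gamma_{\overline{S}};\Q)\to\Q$ is onto, so $[T_\beta^n]=0$ in $H_1(\Gamma_S;\Q)$, and you push forward to $\Gamma$.
\end{itemize}
In other words, the rational Euler class of the central extension survives restriction to finite index, while nothing at the level of $H_1$ does.

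This also makes your Step 1 unnecessary. That is just as well, since a lantern relation among non-commuting twists does not let you pass to powers lying in $\Gamma$. Note also that only $[T_\beta]=0$ in $H_1(\mathcal{H}(S);\Q)$ is needed, not $H_1(\mathcal{H}(S);\Q)=0$. This matters when $g=4$ and $\gamma$ cuts $\Sigma_4$ into two genus-$2$ halves. There it holds because $T_\beta$ is the fiber class of $\pi_1(U\Sigma_2)\subset\mathcal{H}(S)$, which is torsion in $H_1(U\Sigma_2;\Z)$.
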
 

\begin{remark} \label{remark:powersmeridiantwists}
    It follows from Proposition \ref{prop:multitwistmembership} that $T_{\gamma}^m$ belongs to $\mathcal{H}_g$ if and only if $\gamma$ is a meridian. Consequently, if $\Gamma$ is a finite-index subgroup of $\mathcal{H}_g$, then there exists $n \geq 1$ such that $T_{\gamma}^n \in \Gamma$ if and only if $\gamma$ is a meridian.
\end{remark}

To prove Proposition \ref{proposition:twistsvanish}, we introduce the following lemma.

\begin{lemma} \label{lemma:subsurface}
    Let $\gamma$ be a meridian on $\Sigma_g$ for $g \geq 4$. Then there exists a subsurface $S \hookrightarrow \Sigma_g$ with the following properties: \begin{itemize}
        \item We have $S \cong \Sigma_{g_S}^{b_S}$ with $b_S \in \{1,2\}$ and $g_S \geq 3$; and
        \item Letting $i: \operatorname{Mod}(S) \rightarrow \operatorname{Mod}(\Sigma_g)$ be the map induced by extending by the identity, there exists $\beta \subset \partial S$ such that $i(T_{\beta}) = T_{\gamma}$; and
        \item The group $\mathcal{H}(S) := i(\operatorname{Mod}(S)) \cap \mathcal{H}_g$ is isomorphic to $\mathcal{H}_{g_S}^{b_S}$.
    \end{itemize}
\end{lemma}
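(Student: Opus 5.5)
We split into two cases according to whether the meridian $\gamma$ is separating or nonseparating on $\Sigma_g$. In each case we construct $S$ by cutting $\Sigma_g$ along $\gamma$, or along a curve near $\gamma$, and taking a suitable complementary piece. The guiding principle is that surgering the handlebody along the disk $D$ bounded by $\gamma$ produces smaller handlebodies whose boundaries contain the relevant subsurfaces. Moreover, a mapping class supported on such a subsurface extends over $\mathcal{V}_g$ exactly when it extends over the corresponding smaller spotted handlebody.

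\emph{Nonseparating case.} Cut $\mathcal{V}_g$ along $D$ to obtain $\mathcal{V}_{g-1}^2$, whose boundary is $S := \Sigma_g \setminus \nu(\gamma) \cong \Sigma_{g-1}^2$, with the two spots being the two copies of $D$. Then $g_S = g-1 \geq 3$ and $b_S = 2$. Take $\beta$ to be either boundary component of $S$; it is isotopic to $\gamma$ in $\Sigma_g$, so $i(T_\beta) = T_\gamma$.

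\emph{Separating case.} Here $\gamma$ cuts $\Sigma_g$ into pieces of genus $h$ and $g-h$, with $1 \leq h \leq g-1$, and $D$ cuts $\mathcal{V}_g$ into $\mathcal{V}_h^1 \sqcup \mathcal{V}_{g-h}^1$. If $\max(h, g-h) \geq 3$, take $S$ to be the larger side, so $S \cong \Sigma_{g_S}^1$, and set $\beta = \partial S$. The remaining possibility, given $g \geq 4$, is $h = g-h = 2$, so $g = 4$. Here we instead choose a nonseparating meridian $\delta$ on one side, disjoint from $\gamma$, and take $S = \Sigma_4 \setminus \nu(\delta) \cong \Sigma_3^2$. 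This $S$ contains $\gamma$ as an interior curve rather than a boundary curve, which breaks the second condition. So we adjust: let $S$ be the complement of a neighborhood of the genus-$2$ side minus one handle. Concretely, cut the other side along $\delta$ and keep the genus-$3$ piece with two boundary components, one parallel to $\gamma$. Explicitly, $S$ is the union of the genus-$2$ side with the punctured torus cut along $\delta$. Its boundary consists of curves parallel to $\delta$, and $\gamma$ is not among them. Because this keeps failing, the fallback is to take $S$ to be the genus-$2$ side of $\gamma$ joined to the other side cut along $\delta$; this has genus $3$ with boundary the two copies of $\delta$, and we use $\beta = \delta$ only if the application needs just some meridian twist to lie on $\partial S$. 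I expect this $h = 2$, $g = 4$ case to be the main obstacle, and I would check whether the later argument can tolerate $S$ of genus $2$, or whether it can handle $\gamma$ through a chain of relations with nonseparating meridians.

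\emph{Third condition.} The map $i$ is injective because no component of $\Sigma_g \setminus S$ is a disk or an annulus with both boundaries in $\partial S$; the only exception is the nonseparating case, where the kernel of $i$ is generated by $T_{\beta_1} T_{\beta_2}^{-1}$. We need to show that $\phi \in \operatorname{Mod}(S)$ satisfies $i(\phi) \in \mathcal{H}_g$ if and only if $\phi$ extends over the spotted handlebody $\mathcal{V}_{g_S}^{b_S}$. For the forward direction, an extension $\Phi$ of $i(\phi)$ to $\mathcal{V}_g$ fixes the disk $D$ up to isotopy, since $\phi$ fixes $\gamma$. Isotope $\Phi$ so that it fixes $D$ pointwise and preserves the complementary piece, and then restrict. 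The backward direction holds because an extension over $\mathcal{V}_{g_S}^{b_S}$ glues, by the identity, to the rest of $\mathcal{V}_g$. If a kernel is present, it is generated by $T_{\beta_1} T_{\beta_2}^{-1}$, which is an annulus twist lying in $\mathcal{H}$. This is handled by interpreting the isomorphism through Birman/capping with spots, using the paper's convention that spots are fixed pointwise.
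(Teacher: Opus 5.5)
Your nonseparating case, and your separating case with $\max(h,g-h)\ge 3$, are exactly the paper's proof.

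\textbf{The flagged case is a counterexample.} The case you flag, $g=4$ with $\gamma$ cutting $\Sigma_4$ into two genus-$2$ halves, is not an obstacle you failed to get around; the lemma as stated is false there. Suppose $\beta\subset\partial S$ and $i(T_\beta)=T_\gamma$. Dehn twists about essential curves coincide only when the curves are isotopic, so $\beta$ is isotopic to $\gamma$. After an ambient isotopy we may take $\beta=\gamma$. The interior of the connected surface $S$ then lies in one component of $\Sigma_4\setminus\gamma\cong\Sigma_2^1\sqcup\Sigma_2^1$, so $g_S\le 2$. Hence each of your attempted constructions must violate the second bullet: putting $\gamma$ in the interior of $S$ does, and so does using $\beta=\delta$. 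No proof of the statement as written exists. The paper's proof has the same hole: its claim that the side of maximal genus has $g_S\ge 3$ holds for $g\ge 5$ but fails exactly in this case.

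\textbf{The repair.} Your instinct to check whether the application tolerates genus $2$ is the right one. In the proof of Proposition~\ref{proposition:twistsvanish}, the vanishing $H_1(\mathcal{H}(S);\Q)=0$ is used only to make the map $\Q\to H_1(\mathcal{H}(S);\Q)$, $1\mapsto[T_\beta]$, zero. So it suffices that $[T_\beta]=0$ in $H_1(\mathcal{H}_2^1;\Q)$, and this holds:
\begin{itemize}
\item Take disjoint nonseparating meridians $a,c$ in $\Sigma_2^1$. Every curve in the five-holed sphere $\Sigma_2^1\setminus(a\cup c)$ bounds a disk in the cut-open ball, so it is a meridian.
\item The lantern relation in the four-holed sphere bounded by $s$ (the curve enclosing $a^{\pm}$), $c^+$, $c^-$ and $\beta$ reads $T_sT_c^2T_\beta=T_xT_{s'}T_y$.
\item Here $x,y$ are nonseparating meridians, so $T_x,T_y$ are conjugate to $T_c$ in $\mathcal{H}_2^1$. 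Also $s'$ is the separating meridian enclosing $c^{\pm}$, conjugate to $s$. Hence $[T_\beta]=0$.
\end{itemize}
Alternatively, by Proposition~\ref{proposition:H1handlebody} the surjection $H_1(\mathcal{H}_2^1;\Z)\to H_1(\mathcal{H}_2;\Z)$ induced by capping goes between groups both isomorphic to $\Z\oplus(\Z/2\Z)^2$. It is therefore an isomorphism, so the boundary twist, which lies in the kernel of capping, is zero in $H_1(\mathcal{H}_2^1;\Z)$. Either way, Theorem~\ref{maintheorem:multitwistsdie} survives, but the lemma must allow $(g_S,b_S)=(2,1)$ in this case.

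\textbf{The third bullet.} Your worry about the nonseparating case is also justified. As defined, $i(\Mod(S))\cap\mathcal{H}_g$ is the quotient of $\mathcal{H}_{g-1}^2$ by $\langle T_{\beta_1}T_{\beta_2}^{-1}\rangle\cong\Z$, not $\mathcal{H}_{g-1}^2$ itself. Rather than a vague appeal to capping with spots, the clean fix is to use $i^{-1}(\mathcal{H}_g)\subset\Mod(S)$. Your extension argument identifies this with $\mathcal{H}_{g-1}^2$, and its non-injective homomorphism to $\mathcal{H}_g$ is all the later proof needs.
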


\begin{proof}
    There are two cases. If $\gamma$ is nonseparating, let $S$ be the complement of a regular neighborhood of $\gamma$. Then $S \cong \Sigma_{g-1}^2$ and $g-1 \geq 3$. If $\gamma$ is separating, cutting $\Sigma_g$ along $\gamma$ produces two surfaces with boundary. Let $S$ be the component of maximal genus. In this case, $S \cong \Sigma_{g_S}^1$ for $g_S \geq 3$. Cutting a handlebody along a meridian yields a spotted handlebody. In both cases, $S$ bounds a genus $g_S$ subhandlebody of $\mathcal{V}_g$ with $b_S$ spots. Its handlebody group $\mathcal{H}_{g_S}^{b_S}$ is exactly $\mathcal{H}(S)$.
    \end{proof}

This sets us up to prove Proposition \ref{proposition:twistsvanish}.

\begin{proof}[Proof of Proposition \ref{proposition:twistsvanish}] As in Lemma \ref{lemma:subsurface}, let $S$ be a subsurface of $\Sigma_g$ and let $$i: \operatorname{Mod}(S) \rightarrow \operatorname{Mod}(\Sigma_g)$$ be the map induced by extending by the identity. Let $\mathcal{H}(S)$ be the intersection $\mathcal{H}_g \cap i(\operatorname{Mod}(S))$. Then $\mathcal{H}(S) \cong \mathcal{H}_{g_S}^{b_S}$ for $g_S \geq 3$ and $b_S \leq 2$. There is an inclusion $$\iota : \mathcal{H}(S) \rightarrow \mathcal{H}_g$$ and we define $\Gamma_S$ to be $\iota^{-1}(\Gamma)$. It is enough to show that $[T_{\beta}^n]_{\Gamma_S} = 0$. (If a power of $T_{\beta}^n$ lies in $[\Gamma_S, \Gamma_S]$, then the same power of $\iota (T_{\beta}^n) = T_{\gamma}^n$ lies in $[\Gamma, \Gamma]$.) Let $\overline{S}$ be the surface obtained by gluing a disk to $\beta$, and let $\Gamma_{\overline{S}}$ be the image of $\Gamma_S$ in $\mathcal{H}(\overline{S})$ under the forgetful map from Section \ref{section:birman}. We have the following commutative diagram. \[\begin{tikzcd}[ampersand replacement=\&]
	1 \& {\mathbb{Z}} \& {\Gamma_S} \& {\Gamma_{\overline{S}}} \& 1 \\
	1 \& {\mathbb{Z}} \& {\mathcal{H}(S)} \& {\mathcal{H}(\overline{S})} \& 1
	\arrow[from=1-1, to=1-2]
	\arrow[from=1-2, to=1-3]
	\arrow["{\times n}", from=1-2, to=2-2]
	\arrow["Forget", from=1-3, to=1-4]
	\arrow[from=1-3, to=2-3]
	\arrow[from=1-4, to=1-5]
	\arrow[from=1-4, to=2-4]
	\arrow[from=2-1, to=2-2]
	\arrow[from=2-2, to=2-3]
	\arrow["Forget", from=2-3, to=2-4]
	\arrow[from=2-4, to=2-5]
\end{tikzcd}\]

The groups $\Z$ are generated by $T_{\beta}^n$ and $T_{\beta}$, respectively. This induces the following five-term exact sequences.

\[\begin{tikzcd}[ampersand replacement=\&]
	{H_2(\Gamma_{\overline{S}}; \Q)} \& {\mathbb{Q}} \& {H_1(\Gamma_S; \Q)} \& {H_1(\Gamma_{\overline{S}} ; \Q)} \& 0 \\
	{H_2(\mathcal{H}(\overline{S}) ; \Q)} \& {\mathbb{Q}} \& {H_1(\mathcal{H}(S) ; \Q)} \& {H_1(\mathcal{H}(\overline{S}) ; \Q)} \& 0
	\arrow["{f_1}", from=1-1, to=1-2]
	\arrow["{f_2}"', from=1-1, to=2-1]
	\arrow[from=1-2, to=1-3]
	\arrow["\cong", from=1-2, to=2-2]
	\arrow["{f_4}", from=1-3, to=1-4]
	\arrow[from=1-3, to=2-3]
	\arrow[from=1-4, to=1-5]
	\arrow[from=1-4, to=2-4]
	\arrow["{f_3}", from=2-1, to=2-2]
	\arrow[from=2-2, to=2-3]
	\arrow[from=2-3, to=2-4]
	\arrow[from=2-4, to=2-5]
\end{tikzcd}\]

The proof is complete once we show that $f_4$ is an isomorphism. By Lemma \ref{lemma:finiteindexhomology}, $f_2$ is surjective. By Proposition \ref{proposition:H1handlebody}, we have $H_1(\mathcal{H}(S); \Q) = 0$, so $f_3$ is a surjection. By commutativity of the diagram, $f_1$ is surjective, and hence $f_4$ is an isomorphism. \end{proof}

Theorem \ref{maintheorem:multitwistsdie}, which we restate, is an easy extension of Proposition \ref{proposition:twistsvanish}. Our proof follows that of \cite[Corollary 2.10]{PutmanWieland}. 

\begin{theorem*}[Theorem \ref{maintheorem:multitwistsdie}] For $g \geq 4$, let $\Gamma$ be a finite-index subgroup of $\mathcal{H}_g$. If $M$ is a meridian multitwist in $\Gamma$, then the class $[M] = 0$ in $H_1(\Gamma; \Q)$.
\end{theorem*}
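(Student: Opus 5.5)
The plan is to reduce the multitwist statement to Proposition \ref{proposition:twistsvanish}, following the argument of \cite[Corollary 2.10]{PutmanWieland}. Write $M = T_{\gamma_1}^{k_1} \cdots T_{\gamma_m}^{k_m}$ with the $\gamma_i$ disjoint and pairwise non-isotopic essential curves. Because $M \in \Gamma \subset \mathcal{H}_g$, the discussion preceding Theorem \ref{maintheorem:multitwistsdie} tells us that every $\gamma_i$ is a meridian. (If one prefers to deduce this from Proposition \ref{prop:multitwistmembership}, the curves paired by annuli must also be handled. The paper, however, declares all the $\gamma_i$ to be meridians, so I will take that as given.)

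Since $\Gamma$ has finite index in $\mathcal{H}_g$, for each $i$ some positive power $T_{\gamma_i}^{n_i}$ lies in $\Gamma$ (Remark \ref{remark:powersmeridiantwists}). Let $N = \prod_i n_i$, or $\operatorname{lcm}(n_i)$. Then $T_{\gamma_i}^{N k_i} \in \Gamma$ for every $i$.

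The twists commute because the curves are disjoint, so
\[ M^N = \prod_{i=1}^m T_{\gamma_i}^{N k_i}, \]
and each factor lies in $\Gamma$. In $H_1(\Gamma;\Q)$ this gives
\[ N[M] = [M^N] = \sum_i [T_{\gamma_i}^{N k_i}]. \]
Each summand is a power of the form $(T_{\gamma_i}^{n_i})^{(N/n_i)k_i}$, which is an integer multiple of $[T_{\gamma_i}^{n_i}]$. By Proposition \ref{proposition:twistsvanish}, every such class is $0$. Hence $N[M] = 0$, and since we are working with rational coefficients, $[M] = 0$.

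The one point needing care is that the individual twist powers must lie in $\Gamma$ itself for their classes to make sense in $H_1(\Gamma;\Q)$. A priori only the product $M$ is in $\Gamma$, and this is exactly why we pass to $M^N$. Negative exponents $k_i$ cause no trouble: a class of the form $[T^{-a}]$ is just $-[T^{a}]$. I do not expect any further obstacle. All of the substantive work is in Proposition \ref{proposition:twistsvanish}.
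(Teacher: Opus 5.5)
Your proof is correct and is essentially the paper's argument: pass to a power $M^N$ whose individual twist factors $T_{\gamma_i}^{Nk_i}$ lie in $\Gamma$, apply Proposition~\ref{proposition:twistsvanish} to each, and divide by $N$ using rational coefficients. Your choice of $N$ as an lcm (or product) of the $n_i$ is in fact the right one, since the paper's $K=\max\{K_i\}$ does not by itself guarantee $T_{\gamma_i}^{Kk_i}\in\Gamma$; your caveat that ``meridian multitwist'' must mean every $\gamma_i$ is a meridian, with annulus-twist pairs excluded, is also exactly what the paper's proof implicitly relies on.
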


\begin{proof}
    Let $M = T_{\gamma_1}^{k_1} \dots T_{\gamma_m}^{k_m}$ be a meridian multitwist. For each $1 \leq i \leq m$, there exists $K_i \geq 1$ such that $T_{\gamma_i}^{K_i k_i} \in \Gamma$. Let $K = \max\{ K_1, \dots, K_m \}$. Since the $T_{\gamma_i}$ commute, we have $$M^K = T_{\gamma_1}^{K k_1} \dots T_{\gamma_m}^{K k_m}.$$ Proposition \ref{proposition:twistsvanish} says that the class $[T_{\gamma_i}^{K k_i}]$ vanishes in $H_1(\Gamma; \Q)$ for $1 \leq i \leq m$, so the class $[M^K]$ (and thus $[M]$) vanishes as well.
\end{proof}

\section{Proof of Theorem B} \label{section:prooftheorem:kernelaction}

In this section, we prove Theorem \ref{maintheorem:B}. Recall that, if $\Gamma$ is a subgroup of $\mathcal{H}_g$, we define $T(\Gamma)$ to be the group generated by the set $$\{T_{\gamma}^n \in \Gamma \; | \; \gamma \text{ is a meridian} \}.$$ The twist group $\mathcal{T}_g$ (see Section \ref{section:actionpi1}) is the kernel of the action of $\mathcal{H}_g$ on $\pi_1(\mathcal{V}_g)$ and is generated by the set of all meridian twists. We have the following theorem.

\begin{theorem*}[Theorem \ref{maintheorem:B}]
    For $g \geq 4$, let $\Gamma$ be a finite-index subgroup of $\mathcal{H}_g$ for which $[\Gamma \cap \mathcal{T}_g : T(\Gamma)] < \infty$. Then $H_1(\Gamma; \Q) = 0$.
\end{theorem*}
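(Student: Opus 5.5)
The plan is to use the short exact sequence obtained by restricting the sequence of Section \ref{section:actionpi1} to $\Gamma$. Let $\overline{\Gamma}$ be the image of $\Gamma$ in $\operatorname{Out}(F_g)$. Since the kernel of $\Gamma \to \operatorname{Out}(F_g)$ is $\Gamma \cap \mathcal{T}_g$, we get
\[ 1 \rightarrow \Gamma \cap \mathcal{T}_g \rightarrow \Gamma \rightarrow \overline{\Gamma} \rightarrow 1, \]
and $\overline{\Gamma}$ has finite index in $\operatorname{Out}(F_g)$ because $\Gamma$ has finite index in $\mathcal{H}_g$ and the map is surjective by Griffiths. The five-term exact sequence with $\Q$ coefficients gives
\[ H_1(\Gamma\cap\mathcal{T}_g;\Q)_{\overline{\Gamma}} \rightarrow H_1(\Gamma;\Q) \rightarrow H_1(\overline{\Gamma};\Q) \rightarrow 0. \]
So it suffices to show two things: the right-hand term vanishes, and the image of the left-hand term in $H_1(\Gamma;\Q)$ is zero.

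For the right-hand term, I would use that $\operatorname{Out}(F_g)$ has Property (T) for $g \geq 4$ (Kaluba--Kielak--Nowak, Kaluba--Nowak--Ozawa, Nitsche). Property (T) passes to finite-index subgroups, and groups with Property (T) have finite abelianization. Hence $H_1(\overline{\Gamma};\Q)=0$.

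For the left-hand term, the image of $H_1(\Gamma\cap\mathcal{T}_g;\Q)\to H_1(\Gamma;\Q)$ is spanned by the classes of elements of $\Gamma\cap\mathcal{T}_g$. Take $x \in \Gamma \cap \mathcal{T}_g$. Since $[\Gamma\cap\mathcal{T}_g : T(\Gamma)]<\infty$, some power $x^N$ lies in $T(\Gamma)$. (If $T(\Gamma)$ is not normal, pass to its normal core in $\Gamma\cap\mathcal{T}_g$, which still has finite index; only the existence of such an $N$ matters.) By definition, $x^N$ is then a product of elements $T_{\gamma}^n \in \Gamma$ with $\gamma$ a meridian. By Proposition \ref{proposition:twistsvanish}, each such $T_\gamma^n$ has trivial class in $H_1(\Gamma;\Q)$. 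Therefore $N[x] = [x^N] = 0$, and so $[x]=0$ rationally. This shows the map from the coinvariants is zero, and exactness gives $H_1(\Gamma;\Q)=0$.

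The only substantive inputs are Proposition \ref{proposition:twistsvanish}, which is already available, and Property (T) for $\operatorname{Out}(F_g)$, which is also where $g\ge 4$ enters. The step needing care is the second one: I must check that the classes of $T_\gamma^n$ vanish in $H_1(\Gamma;\Q)$, not merely in $H_1$ of some smaller group. This is exactly what Proposition \ref{proposition:twistsvanish} asserts, since it is stated for the ambient finite-index subgroup $\Gamma$. So I expect no real obstacle, and the argument is a direct assembly of these pieces.
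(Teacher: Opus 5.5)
Your proposal is correct and follows the paper's proof. Both use the five-term exact sequence for $1 \to \Gamma\cap\mathcal{T}_g \to \Gamma \to \overline{\Gamma} \to 1$, use Property (T) to kill $H_1(\overline{\Gamma};\Q)$, and use Proposition~\ref{proposition:twistsvanish} to kill the image of the coinvariants. The only difference is cosmetic: where the paper uses the transfer (Lemma~\ref{lemma:finiteindexhomology}) to get surjectivity of $H_1(T(\Gamma);\Q)\to H_1(\Gamma\cap\mathcal{T}_g;\Q)$, you use the equivalent fact that every element of $\Gamma\cap\mathcal{T}_g$ has a positive power in $T(\Gamma)$. That fact holds for any finite-index subgroup, so the normal-core detour is unnecessary.
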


\begin{proof}
    The five-term exact sequence of the extension $$1 \rightarrow \Gamma \cap \mathcal{T}_g \rightarrow \Gamma \rightarrow \Gamma / (\Gamma \cap \mathcal{T}_g) \rightarrow 1$$ contains the exact sequence $$H_1(\Gamma \cap \mathcal{T}_g; \Q)_{\Gamma / (\Gamma \cap \mathcal{T}_g)} \xrightarrow{i} H_1(\Gamma; \Q) \rightarrow H_1(\Gamma / (\Gamma \cap \mathcal{T}_g); \Q) \rightarrow 0.$$ Since $\operatorname{Out}(F_g)$ has property (T) and $\Gamma / (\Gamma \cap \mathcal{T}_g)$ is a finite-index subgroup of $\mathcal{H}_g / \mathcal{T}_g \cong \operatorname{Out}(F_g)$, we have $$H_1(\Gamma / (\Gamma \cap \mathcal{T}_g); \Q) = 0.$$ We assume $[\Gamma \cap \mathcal{T}_g : T(\Gamma)] < \infty$, so by Lemma \ref{lemma:finiteindexhomology} the map $$H_1(T(\Gamma); \Q) \rightarrow H_1(\Gamma \cap \mathcal{T}_g; \Q)$$ is surjective. The group $T(\Gamma)$ is generated by meridian twists, which by Theorem \ref{maintheorem:multitwistsdie} vanish in $H_1(\Gamma; \Q)$. It follows that $i$ is the zero map, and our result follows.
\end{proof}

We have the following corollary. 

\begin{corollary} \label{corollary:twist}
     For $g \geq 4$, let $\Gamma$ be a finite-index subgroup of $\mathcal{H}_g$ that contains the twist group $\mathcal{T}_g$. Then $H_1(\Gamma; \Q) = 0$.
\end{corollary}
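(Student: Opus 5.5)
The corollary should follow directly from Theorem \ref{maintheorem:B}, so the plan is simply to check its hypothesis $[\Gamma \cap \mathcal{T}_g : T(\Gamma)] < \infty$. Since $\Gamma$ contains $\mathcal{T}_g$, we have $\Gamma \cap \mathcal{T}_g = \mathcal{T}_g$. It therefore suffices to show $T(\Gamma) = \mathcal{T}_g$, which gives index $1$.

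One inclusion is immediate. By definition, $T(\Gamma)$ is generated by certain powers of meridian twists, and every meridian twist lies in $\mathcal{T}_g$, so $T(\Gamma) \subseteq \mathcal{T}_g$.

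For the reverse inclusion, let $\gamma$ be any meridian. Then $T_\gamma \in \mathcal{T}_g \subseteq \Gamma$, so $T_\gamma = T_\gamma^1$ belongs to the generating set $\{T_\gamma^n \in \Gamma \mid \gamma \text{ a meridian}\}$. Hence $T(\Gamma)$ contains every meridian twist. By Luft's theorem recalled in Section \ref{section:actionpi1}, these twists generate $\mathcal{T}_g$, so $\mathcal{T}_g \subseteq T(\Gamma)$.

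Thus $T(\Gamma) = \mathcal{T}_g = \Gamma \cap \mathcal{T}_g$, and Theorem \ref{maintheorem:B} gives $H_1(\Gamma;\Q)=0$ for $g \geq 4$. There is no real obstacle here. The only point needing care is that the generation of $\mathcal{T}_g$ by meridian twists is what makes the index finite, in fact equal to $1$.
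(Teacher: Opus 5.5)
Your proposal is correct and matches the paper's own reasoning: the paper treats this corollary as immediate from Theorem~\ref{maintheorem:B}, noting in the introduction that $\mathcal{T}_g \subseteq \Gamma$ forces $T(\Gamma) = \Gamma \cap \mathcal{T}_g$, so the index hypothesis holds with index $1$. One small attribution point: in the paper, generation of $\mathcal{T}_g$ by meridian twists is its definition, while Luft's theorem is what identifies $\mathcal{T}_g$ with the kernel of the action on $\pi_1(\mathcal{V}_g)$.
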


We can also form the following stronger corollary.  

\begin{corollary} \label{corollary:kernelaction}
    For $g \geq 4$, let $\Gamma$ be a finite-index subgroup of $\mathcal{H}_g$ such that the kernel of the action on $\pi_1(\mathcal{V}_g)$ is generated by meridian multitwists. Then $H_1(\Gamma; \Q) = 0$.
\end{corollary}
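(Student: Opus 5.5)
The plan is to deduce Corollary \ref{corollary:kernelaction} from Theorem \ref{maintheorem:B}; the alternative is to rerun its proof with meridian multitwists replacing meridian twists. The kernel of the action of $\Gamma$ on $\pi_1(\mathcal{V}_g)$ is $\Gamma \cap \mathcal{T}_g$, since by Luft's theorem (Section \ref{section:actionpi1}) the kernel of the action of $\mathcal{H}_g$ is $\mathcal{T}_g$. The hypothesis therefore says that $\Gamma \cap \mathcal{T}_g$ is generated by meridian multitwists $M_1, M_2, \dots$, each lying in $\Gamma$.

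The most direct route avoids computing $T(\Gamma)$. I will use the same five-term exact sequence as in the proof of Theorem \ref{maintheorem:B}:
$$H_1(\Gamma \cap \mathcal{T}_g; \Q)_{\Gamma/(\Gamma\cap\mathcal{T}_g)} \xrightarrow{i} H_1(\Gamma;\Q) \rightarrow H_1(\Gamma/(\Gamma\cap \mathcal{T}_g);\Q) \rightarrow 0.$$
The right-hand term vanishes by property (T) for $\operatorname{Out}(F_g)$, exactly as before, because $\Gamma/(\Gamma\cap\mathcal{T}_g)$ is a finite-index subgroup of $\operatorname{Out}(F_g)$. The group $H_1(\Gamma\cap\mathcal{T}_g;\Q)$ is spanned by the classes of the generators $M_j$, and the map $i$ sends the class of $M_j$ to $[M_j] \in H_1(\Gamma;\Q)$. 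Each $M_j$ is a meridian multitwist in $\Gamma$, so Theorem \ref{maintheorem:multitwistsdie} gives $[M_j]=0$. Hence $i=0$, and therefore $H_1(\Gamma;\Q)=0$.

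Alternatively, one could check the hypothesis of Theorem \ref{maintheorem:B} directly. Each $M_j = T_{\gamma_1}^{k_1}\cdots T_{\gamma_m}^{k_m}$ has a power $M_j^K$ lying in $T(\Gamma)$, by the argument in the proof of Theorem \ref{maintheorem:multitwistsdie}. However, this only shows that every generator has a power in $T(\Gamma)$, which does not immediately give finite index. I would therefore avoid this route and use the direct argument above, which needs only that generators map to zero.

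The only step needing care is confirming that the multitwists appearing in the hypothesis are meridian multitwists in the sense of Theorem \ref{maintheorem:multitwistsdie}. The footnote equating the hypothesis with "$\Gamma\cap\mathcal{T}_g$ is generated by meridian multitwists" settles this. Nothing else here is an obstacle.

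Corollary \ref{corollary:twist} follows as a special case. If $\mathcal{T}_g \subseteq \Gamma$, then $\Gamma\cap\mathcal{T}_g = \mathcal{T}_g$, which is generated by meridian twists, and those are in particular meridian multitwists.
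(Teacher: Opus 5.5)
Your proof is correct, and it departs from the paper's main argument at exactly the point you flagged. In Section 5 the paper derives the corollary from Theorem B. It notes that each generating multitwist $M$ has a power $M^N$ in $T(\Gamma)$, and then asserts that $T(\Gamma)$ has finite index in $\Gamma \cap \mathcal{T}_g$.

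Your reason for avoiding that route is sound. Having a power of every generator in a subgroup, even a normal one, does not give finite index: $\Z/2 * \Z/2$ is generated by two involutions, yet the trivial subgroup has infinite index. So that step is not justified as written. What the power argument does legitimately yield is that $H_1(T(\Gamma);\Q) \to H_1(\Gamma \cap \mathcal{T}_g;\Q)$ is surjective, because $[M] = \frac{1}{N}[M^N]$ rationally. That surjectivity is the only use the proof of Theorem B makes of the finite-index hypothesis, so the paper's route can be repaired this way. One should also take $N$ to be a common multiple of the $N_i$ rather than their maximum.

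Your five-term-sequence argument bypasses $T(\Gamma)$ entirely and is in substance the alternate proof in the paper's appendix. There, $\Gamma$ is generated by $\Gamma \cap \mathcal{T}_g$ together with lifts of generators of its image in $\operatorname{Out}(F_g)$. The first set dies by Theorem A, and the lifts die because that image has vanishing rational abelianization. Your packaging is the cleaner of the two. The appendix's claim that a power of a lift is a product of commutators holds only modulo $\Gamma \cap \mathcal{T}_g$, and exactness of the five-term sequence absorbs that point automatically.
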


\begin{proof} It suffices to show that, if the kernel of the action on $\pi_1(\mathcal{V}_g)$ is generated by meridian multitwists, then the conditions of Theorem \ref{maintheorem:B} are satisfied. It follows from Section \ref{section:actionpi1} that the kernel of the action on $\pi_1(\mathcal{V}_g)$ is exactly $\Gamma \cap \mathcal{T}_g$. Take some meridian multitwist $M = T_{\gamma_1}^{k_1} \dots T_{\gamma_m}^{k_m}$. Then, for each $i = 1, \dots, m$, we must have $T_{\gamma_i}^{k_i \cdot N_i} 
\in \Gamma$ for some $N_i \in \Z$. See Remark \ref{remark:powersmeridiantwists}. Letting $N := \max \{N_1, \dots, N_m\}$, we have $M^N \in T(\Gamma)$, and $T(\Gamma)$ is of finite index in $\Gamma \cap \mathcal{T}_g$.
\end{proof}

In Appendix \ref{appendixKernelAction}, we give an alternate proof of Corollary \ref{corollary:kernelaction} which does not invoke Theorem \ref{maintheorem:B}.

\section{Proof of Theorem C} \label{section:torelli}

Next, we consider finite-index subgroups that contain the handlebody Torelli group $\mathcal{HI}_g$, the subgroup of $\mathcal{H}_g$ that acts trivially on $H_1(\Sigma_g; \Z)$. We have $H_1(\Sigma_g; \Z) = \Z \langle a_1, \dots, a_g, b_1, \dots, b_g \rangle$, where Figure \ref{figure:homologybasisclosed} shows curves such that $[\mathfrak{a}_i] = a_i$ and $[\mathfrak{b}_i] = b_i$. 

\begin{figure}[H]
    \centering
    \includegraphics[scale=.5]{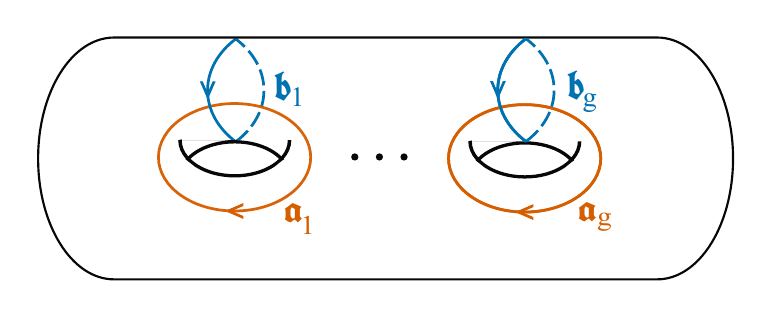}
    \caption{A basis for $H_1(\Sigma_g; \Z)$}
    \label{figure:homologybasisclosed}
\end{figure} 

\noindent The group $\Mod(\Sigma_g)$ acts on $H_1(\Sigma_g; \Z)$ by automorphisms that fix its symplectic form, and the Torelli group $\mathcal{I}_g$ is the kernel of this action. Letting $\Psi: \Mod(\Sigma_g) \rightarrow \operatorname{Sp}_{2g}(\Z)$ be the symplectic representation, this is encoded in the short exact sequence 
$$1 \rightarrow \mathcal{I}_g \rightarrow \Mod(\Sigma_g) \xrightarrow{\Psi} \operatorname{Sp}_{2g}(\Z) \rightarrow 1.$$ This action restricts to $\mathcal{H}_g$ and Hirose \cite{Hirose} proved that the image in $\operatorname{Sp}_{2g}(\Z)$ is $$\Psi(\mathcal{H}_g) \cong \left\{  \begin{bmatrix} A & 0 \\ B & (A^t)^{-1} \end{bmatrix} \; \Big| \; A \in  \operatorname{GL}_{g}(\Z), \; B = B^t \right\}.$$ Hirose chose the convention that the $\mathfrak{a}_i$ curves bound disks in the fixed handlebody, so he called this image $urSp_{2g}(\Z)$, where {\textit{ur}} stands for ``upper right." We instead have the $\mathfrak{b}_i$ curves bound disks in our handlebody, so the nontrivial entries of our matrices lie in the lower left.\footnote{We choose this convention so that $\mathcal{H}_g$ acts in the standard way on $H_1(\mathcal{V}_g; \Z) \cong \Z \langle a_1, \dots, a_g \rangle$.} The kernel of this action is $\mathcal{HI}_g := \mathcal{H}_g \cap \mathcal{I}_g$ and we have the short exact sequence $$1 \rightarrow \mathcal{HI}_g \rightarrow \mathcal{H}_g \rightarrow \Psi(\mathcal{H}_g) \rightarrow 1.$$ In this section, we will prove the following theorem. 

\begin{theorem*}[Theorem \ref{maintheorem:torelli}]
     For $g \geq 3$, let $\Gamma$ be a finite-index subgroup of $\mathcal{H}_g$ that contains the handlebody Torelli group $\mathcal{HI}_g$. Then $H_1(\Gamma; \Q) = 0$.
\end{theorem*}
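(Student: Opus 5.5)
Since $\Gamma \supseteq \mathcal{HI}_g$, the group $\Gamma$ is the full preimage of its image $\Psi(\Gamma)$, a finite-index subgroup of $\Psi(\mathcal{H}_g)$. So $H_1(\Gamma;\Q) \cong H_1(\Gamma;\Q)$ surjects onto $H_1(\Psi(\Gamma);\Q)$, with kernel the image of $H_1(\mathcal{HI}_g;\Q)_{\Psi(\Gamma)}$ by the five-term exact sequence of
$$1 \rightarrow \mathcal{HI}_g \rightarrow \Gamma \rightarrow \Psi(\Gamma) \rightarrow 1.$$
I will therefore prove two things: (1) $H_1(\Psi(\Gamma);\Q)=0$, and (2) the image of $H_1(\mathcal{HI}_g;\Q)$ in $H_1(\Gamma;\Q)$ vanishes.

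For (1), write $\Psi(\mathcal{H}_g)\cong \operatorname{Sym}_g(\Z)\rtimes \operatorname{GL}_g(\Z)$, where $B$ is a symmetric matrix and $A$ acts by $B\mapsto A B A^t$ (up to the transpose/inverse conventions). Let $\Lambda=\Psi(\Gamma)$. Then $\Lambda\cap\operatorname{Sym}_g(\Z)$ is a finite-index lattice $L$, and the image $\Lambda_0$ of $\Lambda$ in $\operatorname{GL}_g(\Z)$ has finite index. Apply the five-term sequence to $1\to L\to\Lambda\to\Lambda_0\to 1$.
\begin{itemize}
\item For $g\ge 3$, $H_1(\Lambda_0;\Q)=0$, since finite-index subgroups of $\operatorname{SL}_g(\Z)$, $g\geq 3$, have finite abelianization (Kazhdan property (T), or the congruence subgroup property).
\item The coinvariants $(L\otimes\Q)_{\Lambda_0}=(\operatorname{Sym}^2\Q^g)_{\Lambda_0}$ vanish. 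Indeed, $\Lambda_0$ is Zariski dense in $\operatorname{SL}_g$, and $\operatorname{Sym}^2$ of the standard representation is irreducible and nontrivial, so the coinvariants are $0$.
\end{itemize}
Concretely, I would instead use an explicit unipotent element $A=I+kE_{ij}\in\Lambda_0$ to show that $A B A^t - B$ spans all of $\operatorname{Sym}\otimes\Q$, avoiding algebraic-group language.

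For (2), the image of $H_1(\mathcal{HI}_g;\Q)$ in $H_1(\Gamma;\Q)$ factors through $H_1(\mathcal{HI}_g;\Q)_{\Gamma}$, and I want to kill generators. This is where I expect the main obstacle, because I do not want to rely on a finite generating set of $\mathcal{HI}_g$ (not known in general). The cleaner route is to avoid $\mathcal{HI}_g$ altogether and use $\mathcal{H}_g$. Let $N=\bigcap_{h}h\Gamma h^{-1}$ be the normal core and replace nothing: since $\mathcal{HI}_g\subseteq\Gamma$, note $\Gamma\supseteq \Psi^{-1}(\Psi(\Gamma))$. Then compare with the sequence for $\mathcal{H}_g$ itself,
$$H_1(\mathcal{HI}_g;\Q)_{\Psi(\mathcal{H}_g)}\to H_1(\mathcal{H}_g;\Q)=0,$$
which does not suffice directly. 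So instead I would show that $\mathcal{HI}_g$ is normally generated in $\mathcal{H}_g$ (for $g\ge3$) by explicit multitwists: separating meridian twists, annulus twists $T_{c}T_{c'}^{-1}$ with $c,c'$ homologous, and genus-one bounding pair maps in $\mathcal{H}_g$. I expect to cite a known generating result, for example Omori's or Pitsch's work, noting that $\mathcal{HI}_g$ lies in $\mathcal{H}_B\mathcal{I}_g$.

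Since $\Gamma$ has finite index, each $\mathcal{H}_g$-conjugate of such an element lies in $\mathcal{HI}_g\subseteq\Gamma$. It then suffices to show that each such element vanishes in $H_1(\Gamma;\Q)$.
\begin{itemize}
\item Meridian twists are handled by Proposition~\ref{proposition:twistsvanish}. That argument needs $g\ge4$, so for $g=3$ I would redo it with the subsurface $S$ of genus $\ge 2$, or argue directly.
\item Annulus twists and bounding-pair-type elements need an analogue of Lemma~\ref{lemma:BPMvanish}: embed each in a subhandlebody $\mathcal{H}(S)$ with $g_S\ge 2$ or $3$ and $b_S\leq 2$, and run the same comparison of five-term sequences using Proposition~\ref{proposition:H1handlebody} together with Lemma~\ref{lemma:finiteindexhomology}.
\end{itemize}
Verifying this vanishing for bounding-pair/annulus elements, especially at $g=3$, is the step I expect to be hardest.
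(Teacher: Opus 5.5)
Your overall architecture matches the paper's. You use the five-term sequence for $1\to\mathcal{HI}_g\to\Gamma\to\Psi(\Gamma)\to1$, your part (1) is essentially Proposition~\ref{proposition:urSp} and is fine, and Omori's theorem supplies the generating set the paper uses. Omori gives genus-one bounding pair annulus twists alone, so you need neither finite generation nor separating meridian twists as extra generators. Your $g=3$ fix for those would fail anyway: redoing Proposition~\ref{proposition:twistsvanish} in genus $2$ breaks because $H_1(\mathcal{H}_2^b;\Q)\neq 0$ by Proposition~\ref{proposition:H1handlebody}.

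The genuine gap is the step you flag as hardest, showing $[T_xT_y^{-1}]=0$ in $H_1(\Gamma;\Q)$. The comparison-of-five-term-sequences trick from Proposition~\ref{proposition:twistsvanish} works only because $T_\beta$ generates the \emph{central} kernel $\Z$ of $\mathcal{H}(S)\to\mathcal{H}(\overline S)$. Both rows then have the same coinvariant term $\Q$, so vanishing of $H_1(\mathcal{H}(S);\Q)$ can be pulled up to $\Gamma_S$.

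A bounding pair annulus twist is not a boundary twist in any subhandlebody group $\mathcal{H}_{g_S}^{b_S}$, since $x,y$ are not meridians. It naturally lives in the non-central kernel $\pi_1(U\Sigma_h)$ of the Birman sequence $\mathcal{H}(T)\to\mathcal{H}(\overline T)$. Here $T\cong\Sigma_h^1$ is cut off by a separating meridian $z$ such that $x,y,z$ cobound a pair of pants. In that setting, knowing $H_1(\mathcal{H}(T);\Q)=0$ says nothing about $\Gamma_T$. The coinvariants for $\Gamma_{\overline T}$ surject onto those for $\mathcal{H}(\overline T)$, not the other way, so the diagram chase breaks.

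Your sketch also never uses the hypothesis $\Gamma\supseteq\mathcal{HI}_g$ in an essential way, and it is essential. It guarantees that $\Gamma_T$ contains the \emph{entire} disk-pushing subgroup, which lies in $\mathcal{HI}_g$. The kernel of $\Gamma_T\to\Gamma_{\overline T}$ then has rational $H_1$ equal to $H_1(\Sigma_h;\Q)$, because the fiber class dies by the boundary relation. Otherwise you would face the homology of a finite-index subgroup of $\pi_1(U\Sigma_h)$, whose coinvariants are a Putman--Wieland-type problem.

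The missing idea is the Zariski-density/irreducibility argument you already used for $\operatorname{Sym}^2$ in part (1), now applied to $H_1(\Sigma_h;\Q)\cong V\oplus V^*$. Both summands are nontrivial irreducible $\operatorname{SL}_h(\Q)$-modules, and $\Psi(\Gamma_{\overline T})$ contains a finite-index subgroup of $\operatorname{SL}_h(\Z)$. Lemma~\ref{lemma:coinvariantsSL} then gives $H_1(\pi_1(U\Sigma_h);\Q)_{\Gamma_{\overline T}}=0$. The five-term sequence yields $[T_xT_y^{-1}]=0$ in $H_1(\Gamma_T;\Q)$, hence in $H_1(\Gamma;\Q)$.
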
 

Before we prove Theorem \ref{maintheorem:torelli}, we introduce some background.

\subsection{Finite-index subgroups of $\Psi(\mathcal{H}_g)$} \label{section:finiteindexPsi} If $\Gamma$ is a finite-index subgroup of $\mathcal{H}_g$, then $\Psi(\Gamma)$ is a finite-index subgroup of $\Psi(\mathcal{H}_g)$. To prove Theorem \ref{maintheorem:torelli}, we reduce the problem of determining $H_1(\Gamma; \Q)$ to that of determining $H_1(\Psi(\Gamma); \Q)$. Thus we first have the following proposition.

\begin{proposition}
   Let $g \geq 3$ and let $\Delta$ be a finite-index subgroup of $\Psi(\mathcal{H}_g)$. Then $H_1(\Delta; \Q) = 0$. \label{proposition:urSp}
\end{proposition}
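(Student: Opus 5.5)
The group $\Psi(\mathcal{H}_g)$ is a semidirect product
$$1 \rightarrow \operatorname{Sym}_g(\Z) \rightarrow \Psi(\mathcal{H}_g) \rightarrow \operatorname{GL}_g(\Z) \rightarrow 1,$$
where the kernel consists of the matrices with $A = I$ and is the additive group of symmetric $g\times g$ integer matrices $B$. One checks that $\begin{bmatrix} A & 0 \\ 0 & (A^t)^{-1} \end{bmatrix}$ conjugates $\begin{bmatrix} I & 0 \\ B & I \end{bmatrix}$ to $\begin{bmatrix} I & 0 \\ (A^t)^{-1} B A^{-1} & I \end{bmatrix}$. So $\operatorname{GL}_g(\Z)$ acts on $\operatorname{Sym}_g(\Z)$ by the congruence action $B \mapsto (A^t)^{-1}BA^{-1}$, which up to reindexing is $B \mapsto A B A^t$.

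Given a finite-index $\Delta$, set $N = \Delta \cap \operatorname{Sym}_g(\Z)$ and let $Q$ be the image of $\Delta$ in $\operatorname{GL}_g(\Z)$. Then $N$ is a finite-index (hence full-rank) lattice in $\operatorname{Sym}_g(\Z)$, and $Q$ is finite index in $\operatorname{GL}_g(\Z)$. The five-term exact sequence for $1 \to N \to \Delta \to Q \to 1$ gives
$$(N \otimes \Q)_Q \rightarrow H_1(\Delta;\Q) \rightarrow H_1(Q;\Q) \rightarrow 0,$$
so it suffices to show both outer terms vanish.

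\textbf{The quotient term.} For $g \geq 3$, $\operatorname{SL}_g(\Z)$ has property (T), and so does its finite-index overgroup $\operatorname{GL}_g(\Z)$. Every finite-index subgroup $Q$ therefore also has property (T), so $H_1(Q;\Q)=0$. Alternatively, one can cite Margulis normal subgroup theorem or the congruence subgroup property. I will simply invoke property (T), as the paper already does for $\operatorname{Out}(F_g)$.

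\textbf{The coinvariants.} We have $N \otimes \Q = \operatorname{Sym}_g(\Q) \cong \operatorname{Sym}^2(\Q^g)$ (twisted by the dual, which does not matter). As a representation of $\operatorname{SL}_g$, $\operatorname{Sym}^2(\Q^g)$ is irreducible and nontrivial. By Borel density, the finite-index subgroup $Q \cap \operatorname{SL}_g(\Z)$ is Zariski dense in $\operatorname{SL}_g$. Hence the coinvariants of $Q$ on $\operatorname{Sym}^2$ equal those of the algebraic group $\operatorname{SL}_g$, and these vanish since the representation is irreducible and nontrivial.

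To avoid citing Borel density, I would give a hands-on argument instead. $Q$ contains the elementary matrices $E_{ij}^m$ for some $m \geq 1$. For $i \neq j$, compute
$$E_{ij}^m \cdot e_{jj} - e_{jj} = m(e_{ij}+e_{ji}) + m^2 e_{ii}.$$
Here $e_{kl}$ denotes the standard symmetric basis elements, with $e_{ij}+e_{ji}$ the off-diagonal one. This relation is already enough for $g \geq 2$ with two indices: applying $E_{ij}^m$ to $e_{ij}+e_{ji}$ gives $2m\, e_{ii}$, so $e_{ii}$ vanishes rationally. The first relation then kills $e_{ij}+e_{ji}$. Together these span $\operatorname{Sym}_g(\Q)$, so $(N\otimes\Q)_Q = 0$.

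The main technical point is getting the twisted action and signs right when computing these elementary-matrix moves; I expect that to be routine. The step needing $g \geq 3$ is the vanishing of $H_1(Q;\Q)$; the rest of the argument works for $g \geq 2$.
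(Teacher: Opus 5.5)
Your proof is correct and has the same skeleton as the paper's: write $\Psi(\mathcal{H}_g)$ as $\operatorname{GL}_g(\Z) \ltimes \operatorname{Sym}^2(\Z^g)$, use property (T) to kill the $\operatorname{GL}_g$ part, and show the $\operatorname{Sym}^2$ coinvariants vanish rationally. You differ in how you carry out both steps, and in each case your version is the more careful one.

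\textbf{The splitting step.} The paper sets $G = \Delta \cap \operatorname{GL}_g(\Z)$ and $S = \Delta \cap \operatorname{Sym}^2(\Z^g)$ and asserts $\Delta = G \ltimes S$. This need not hold for an arbitrary finite-index $\Delta$. For example, conjugating $\operatorname{GL}_g(\Z) \ltimes 2\operatorname{Sym}^2(\Z^g)$ by the unipotent element corresponding to $e_{11}$ gives a $\Delta$ that meets $\operatorname{GL}_g(\Z)$ only in the stabilizer of $e_{11} \bmod 2$. The slip is harmless, since $G \ltimes S$ has finite index in $\Delta$ and Lemma \ref{lemma:finiteindexhomology} applies. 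Your five-term sequence for $1 \to N \to \Delta \to Q \to 1$ avoids the issue altogether, because you take $Q$ to be the \emph{image} of $\Delta$ rather than its intersection with $\operatorname{GL}_g(\Z)$.

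\textbf{The coinvariants step.} The paper argues from irreducibility via a claim about orbits that is imprecise as written: a single orbit is never all of $\operatorname{Sym}^2(\Z^g)$. What is needed is that the span of the differences $g(s)-s$ has finite index, and that rests on a Zariski-density argument like the one in Lemma \ref{lemma:coinvariantsSL}. Your Borel-density remark is exactly that argument. Your elementary-matrix computation is a self-contained alternative, and its two identities check out: the differences are $m(e_{ij}+e_{ji})+m^2e_{ii}$ and $2m\,e_{ii}$, including after accounting for the twisted action $B \mapsto (A^t)^{-1}BA^{-1}$. It also makes plain that the coinvariant step works for $g \geq 2$, so only the vanishing of $H_1(Q;\Q)$ uses $g \geq 3$.
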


\begin{proof}
    We use the isomorphism $\Psi(\mathcal{H}_g) \cong \operatorname{GL}_g(\Z) \ltimes \operatorname{Sym}^2(\Z^g)$. Let $G := \Delta \cap \operatorname{GL}_g(\Z)$ and $S := \Delta \cap \operatorname{Sym}^2(\Z^g)$, so we have $$\Delta = G \ltimes S.$$ The groups $G$ and $S$ are of finite index in $\operatorname{GL}_g(\Z)$ and $\operatorname{Sym}^2(\Z^g)$, respectively. Then $$H_1(\Delta; \Q) = H_1(G; \Q) \times H_1(S; \Q)_G.$$ Since $\operatorname{GL}_g(\Z)$ has property (T) when $g \geq 3$, we know $H_1(G; \Q) = 0$. Since $S$ is abelian, we have $$H_1(S; \Q)_G = (S / [G,S]) \otimes \Q.$$ The group $[G,S]$ is generated by the set $$\{ g(s) - s \; | \; g \in G, \; s \in S \}.$$ The group $\operatorname{GL}_g(\Z)$ acts irreducibly on $\operatorname{Sym}^2(\Z^g)$, so given nonzero $s \in S$, the $\operatorname{GL}_g(\Z)$-orbit of $s$ is all of $\operatorname{Sym}^2(\Z^g)$. It follows that the $G$-orbit of $s$ is finite-index in $\operatorname{Sym}^2(\Z^g)$, and thus finite-index in $S$. \end{proof}

Note that $\Psi(\mathcal{H}_g)$ contains a subgroup \begin{equation} \label{equation:SLZ} \left\{ \begin{bmatrix} A & 0 \\ 0 & (A^t)^{-1} \end{bmatrix} \; \Big| \; A \in \operatorname{SL}_g(\Z) \right\} \cong \operatorname{SL}_g(\Z). \end{equation} If $\Psi(\Gamma)$ is a finite-index subgroup of $\Psi(\mathcal{H}_g)$, then the intersection of $\Psi(\Gamma)$ with the subgroup in Equation \ref{equation:SLZ} is isomorphic to a finite-index subgroup of $\operatorname{SL}_g(\Z)$. This motivates the following lemma.

\begin{lemma} \label{lemma:coinvariantsSL}
    Let $g \geq 2$. Let $\Delta$ be a finite-index subgroup of $\operatorname{SL}_g(\Z)$, and let $W$ be a nontrivial irreducible $\operatorname{SL}_g(\Q)$ representation. Then $W_{\Delta} = 0$.
\end{lemma}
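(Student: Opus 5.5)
We need to show that $W_\Delta = W/\langle w - \delta w \mid w\in W,\ \delta\in\Delta\rangle$ vanishes. Equivalently, the subspace $U \subseteq W$ spanned by all $w - \delta w$ is all of $W$. The plan is to show that $U$ is an $\operatorname{SL}_g(\Q)$-subrepresentation of $W$ and that $U \neq 0$. Irreducibility of $W$ then forces $U = W$.

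\textbf{Step 1 (unipotent elements in $\Delta$).} Let $E_{ij}$ ($i \neq j$) be the elementary matrices. Since $\Delta$ has finite index in $\operatorname{SL}_g(\Z)$, for each $i\neq j$ some power $E_{ij}^{N}$ lies in $\Delta$; we fix one $N$ that works for all pairs. A finite-dimensional rational representation of $\operatorname{SL}_g(\Q)$ restricts on each root subgroup $t\mapsto E_{ij}^t$ to a unipotent, polynomial action. (Here I assume, as is implicit, that $W$ is a finite-dimensional algebraic representation.) So $\rho(E_{ij}^t)=\exp(t X_{ij})$ with $X_{ij}$ nilpotent, and $\rho(E_{ij}^{N}) - 1 = \exp(NX_{ij}) - 1 = NX_{ij}\cdot(\text{invertible})$. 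Hence $U \supseteq \operatorname{im}(\rho(E_{ij}^{N})-1) = \operatorname{im}(X_{ij})$ for all $i \ne j$.

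\textbf{Step 2 ($U$ is a subrepresentation).} Since $\Delta$ is Zariski dense in $\operatorname{SL}_g(\Q)$ (Borel density), invariance of $U$ under $\Delta$ would suffice. A more elementary route: let $\mathfrak u$ be the Lie subalgebra generated by the $X_{ij}$, which is $\mathfrak{sl}_g(\Q)$ acting on $W$. Consider the subspace $U' = \sum_{i\neq j} \operatorname{im}(X_{ij}) \subseteq U$. It is invariant under each $X_{kl}$, since $X_{kl}(X_{ij}w) \in \operatorname{im}(X_{kl})$. So $U'$ is an $\mathfrak{sl}_g$-submodule, hence invariant under each $\exp(tX_{ij})$. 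The $E_{ij}^t$ generate $\operatorname{SL}_g(\Q)$, so $U'$ is an $\operatorname{SL}_g(\Q)$-subrepresentation.

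\textbf{Step 3 (nonvanishing and conclusion).} If $U'=0$, then every $X_{ij}$ acts by zero, so every elementary matrix acts trivially. Then all of $\operatorname{SL}_g(\Q)$ acts trivially, contradicting the nontriviality of $W$. Hence $U'\neq 0$, so by irreducibility $U' = W$. Therefore $U = W$ and $W_\Delta = 0$.

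The main obstacle is Step 1: justifying that the root subgroups act unipotently and polynomially, so that $\rho(E_{ij}^N)-1$ has the same image as $X_{ij}$. This holds for algebraic representations, which is the setting of the later application, where $W$ is a piece of $\operatorname{Sym}^2$ or a similar tensor representation. If needed, I would instead verify the claim directly for the specific representations used.
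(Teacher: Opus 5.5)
Your proof is correct, but it takes a different route from the paper. The paper uses Zariski density of $\Delta$ in $\operatorname{SL}_g(\Q)$ (citing Putman) twice. First, because stabilizers of subspaces are Zariski closed, $W$ stays irreducible as a $\Delta$-module. Second, because the kernel of the action is Zariski closed, $\Delta$ cannot act trivially on $W$. The $\Delta$-invariant span of the vectors $x-\delta x$ is then a nonzero $\Delta$-submodule, hence all of $W$.

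You never pass to irreducibility under $\Delta$ and never invoke density. You use only that $\Delta$ contains the unipotents $E_{ij}^N$, and that in characteristic zero $\rho(E_{ij}^N)-1 = NX_{ij}\cdot(\text{unipotent})$ has the same image as $X_{ij}$. This shows the span contains $\sum_{i\neq j}\operatorname{im}X_{ij} = \mathfrak{sl}_g\cdot W$, which is visibly $\operatorname{SL}_g(\Q)$-stable, and irreducibility of $W$ over $\operatorname{SL}_g(\Q)$ finishes the argument.

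Your approach gains two things. It is self-contained. It also yields more: for any finite-dimensional rational representation, $W_\Delta = W/\mathfrak{sl}_g W = W_{\operatorname{SL}_g(\Q)}$, since $\operatorname{SL}_g(\Q)$ acts trivially on $W/\mathfrak{sl}_g W$. So in the later applications to $V\oplus V^*$ and $U\otimes\Q$ one would not even need to split into irreducibles.

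The paper's approach is shorter and works for any Zariski-dense $\Delta$, including subgroups containing no nontrivial unipotents, where your Step 1 has nothing to act on. Both arguments need $W$ to be a finite-dimensional algebraic representation. The paper uses this implicitly when it calls stabilizers and kernels Zariski closed, so the assumption you flagged is the same one the paper makes. The aside about the Lie subalgebra generated by the $X_{ij}$ is harmless but not needed: invariance of $\sum_{i\neq j}\operatorname{im}X_{ij}$ under each $X_{kl}$ is immediate.
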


\begin{proof} Since $\Delta$ is finite-index in $\operatorname{SL}_g(\Z)$, we know by \cite[Lemma 2.2]{PutmanSL} that $\Delta$ is Zariski dense in $\operatorname{SL}_g(\Q)$. We first claim that $W$ is an irreducible $\Delta$-representation. Suppose $W' \subset W$ were a nonzero proper $\Delta$-submodule. Then the subgroup of $\operatorname{SL}_g(\Q)$ preserving $W'$ would be a Zariski-closed subgroup containing $\Delta$. Since $\Delta$ is Zariski dense, this subgroup would be all of $\operatorname{SL}_g(\Q)$, contradicting the irreducibility of $W$ as an $\operatorname{SL}_g(\Q)$-module.

Now consider the coinvariants $W_{\Delta} = W / K$, where $K = \langle x-g(x) \; | \; x \in W, \; g \in \Delta \rangle$. The submodule $K$ is $\Delta$-invariant. We claim that $K \neq 0$. If $K = 0$, then $\Delta$ acts trivially on $W$. Since $W$ is irreducible as a $\Delta$-representation, this would force $W$ to be one-dimensional. Moreover, the kernel of the $\operatorname{SL}_g(\Q)$-action on $W$ is Zariski closed and contains the Zariski-dense subgroup $\Delta$, so it would have to be all of $\operatorname{SL}_g(\Q)$. Thus the entire group would act trivially on $W$, contradicting the assumption that $W$ is a nontrivial representation. Since $W$ is irreducible as a $\Delta$-module and $K$ is a nonzero $\Delta$-submodule, it follows that $K = W$ and $W_{\Delta} = 0$. \end{proof}

\subsection{Bounding pair annulus twists vanish} Let $g \geq 3$. Omori \cite[Theorem 1.2]{Omori} proved that $\mathcal{HI}_g$ is generated by \emph{bounding pair annulus twists}:\footnote{Omori actually proved the stronger fact that $\mathcal{HI}_g$ is generated by bounding pair annulus twists $T_x T_y^{-1}$ for which $x \cup y$ bounds a genus-1 subsurface and $x,y$ are not meridians.} products $T_x T_y^{-1}$ for which $x$ and $y$ are disjoint nonseparating simple closed curves on $\Sigma_g$ such that $x \cup y$ separates $\Sigma_g$ and bounds an embedded annulus in $\mathcal{V}_g$. We prove the following lemma.

\begin{lemma}[Bounding pair annulus twists vanish] \label{lemma:BPMvanish}
    For $g \geq 3$, let $\Gamma$ be a finite-index subgroup of $\mathcal{H}_g$ that contains $\mathcal{HI}_g$. Then, for all bounding pair annulus twists $T_x T_y^{-1}$, the class $[T_x T_y^{-1}] = 0$ in $H_1(\Gamma; \Q)$.
\end{lemma}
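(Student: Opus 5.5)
Following the paper's proof of Proposition \ref{proposition:twistsvanish}, I will use a subsurface and the Birman exact sequence, with a commutator trick taking the place of Proposition \ref{proposition:H1handlebody}. Since $\Gamma \supseteq \mathcal{HI}_g$ and every bounding pair annulus twist acts trivially on $H_1(\Sigma_g;\Z)$, we have $T_xT_y^{-1} \in \mathcal{HI}_g \subseteq \Gamma$.

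\textbf{Step 1 (commutator in the big group).} I first exhibit $T_xT_y^{-1}$ as a commutator $[f,h]$ with $f,h \in \mathcal{H}_g$ and $h \in \mathcal{HI}_g$. Let $A\subset\mathcal V_g$ be the annulus with $\partial A = x\cup y$. Choose an arc $\delta$ on the genus-$h\geq 1$ side of $x\cup y$ joining $x$ to $y$, and take a third curve $z$ such that $x\cup z$ also cobounds an annulus in $\mathcal{V}_g$. Then pick $f\in\mathcal H_g$ supported near a subhandlebody with $f(x)=x$ and $f(y)=z$. Concretely, I would realize $f$ by a handle slide supported on one side of $A$, so that $f(T_xT_y^{-1})f^{-1}=T_xT_z^{-1}$. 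This gives the relation $T_xT_y^{-1}\cdot(T_xT_z^{-1})^{-1}=T_zT_y^{-1}$, which is again a bounding pair annulus twist (one needs $y\cup z$ to cobound an annulus). A cleaner route is to note that all bounding pair annulus twists of a fixed genus are conjugate in $\mathcal H_g$ (change of coordinates for handlebodies), and to find a chain relation: pairwise-disjoint curves $y,z,w$ on the correct side such that
\[
(T_xT_y^{-1})(T_yT_z^{-1})=T_xT_z^{-1}.
\]
This is trivially true. So the right structural statement is the following: the classes $[T_xT_y^{-1}]$ satisfy a cocycle relation, and conjugation by $\Gamma$ preserves them.

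\textbf{Step 2 (reduction to finite index).} Conjugation by any element of $\Gamma$ acts trivially on $H_1(\Gamma;\Q)$. Conjugation by an element $\phi\in\mathcal H_g$ need not, but $\mathcal H_g$ permutes the finitely many cosets of $\Gamma$. Replacing $\Gamma$ by its normal core $\Gamma'$ (which still contains $\mathcal{HI}_g$, being normal in $\mathcal H_g$), Lemma \ref{lemma:finiteindexhomology} tells me that $H_1(\Gamma';\Q)\to H_1(\Gamma;\Q)$ is surjective. It therefore suffices to treat $\Gamma$ normal. The finite group $\mathcal H_g/\Gamma$ then acts on $H_1(\Gamma;\Q)$, and it suffices to show that the image of $\mathcal{HI}_g$, i.e.\ $H_1(\mathcal{HI}_g;\Q)_{\Gamma}$ mapped in, vanishes.

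\textbf{Step 3 (representation theory).} The image of $H_1(\mathcal{HI}_g;\Q)$ in $H_1(\Gamma;\Q)$ factors through $H_1(\mathcal{HI}_g;\Q)_{\Psi(\Gamma)}$. Restricting to the copy of a finite-index subgroup $\Delta\subseteq \operatorname{SL}_g(\Z)$ inside $\Psi(\Gamma)$ (Equation \ref{equation:SLZ}), it suffices to show that the span $V$ of the bounding pair classes in $H_1(\mathcal{HI}_g;\Q)$ is a quotient of a finite-dimensional rational $\operatorname{SL}_g(\Q)$-representation with no trivial summand. Lemma \ref{lemma:coinvariantsSL} then kills $V_\Delta$. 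To obtain this I would use a Johnson-type homomorphism $\tau:\mathcal{HI}_g\to\wedge^3 H_1(\Sigma_g;\Q)$, restricted to the handlebody setting. This map sends $T_xT_y^{-1}$ to $a\wedge\omega_{\text{sub}}$, where $a=[x]$ lies in the $\Z\langle a_i\rangle$ part. Decomposing the target as an $\operatorname{SL}_g$-module, the trivial summand must be checked to receive no bounding pair classes.

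\textbf{Main obstacle.} The hard part is Step 3's input: $H_1(\mathcal{HI}_g;\Q)$ may be larger than the image of $\tau$, so the bounding pair classes might have a component in an $\operatorname{SL}_g$-invariant piece. To avoid this, my fallback is the paper's Birman-sequence trick. Choose a meridian $m$ disjoint from $x\cup y$ on the large side, and let $S$ be the complement of $m$, so that $\mathcal H(S)\cong\mathcal H_{g-1}^2$. Then realize $T_xT_y^{-1}$ as a point-push (boundary push) in $\pi_1(U\Sigma^1_{g-1})\subset\mathcal H^2_{g-1}$ around the curve $x$, which holds because $x\cup y$ cobounds a pair of pants with the boundary component. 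Running the five-term sequence exactly as in the proof of Proposition \ref{proposition:twistsvanish}, the class of $T_xT_y^{-1}$ lies in the image of $H_1(\pi_1(U\Sigma);\Q)_{\Gamma_{\overline S}}$. The coinvariant computation of Proposition \ref{proposition:H1handlebody}, done with a finite-index $\Gamma_{\overline S}$ (using powers of the maps $T_{\mathfrak b_i}$ and $h_i$, which lie in $\Gamma$ up to finite index because they are multitwists), then shows these coinvariants vanish rationally. This fallback is what I would try first in writing.
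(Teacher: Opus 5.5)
Your fallback has the same skeleton as the paper's proof (subsurface, Birman sequence, five-term sequence, vanishing coinvariants). Steps 1--3 do not produce an argument: Step 1 ends in a tautology, and Step 3 stalls on the obstacle you name yourself. So everything rests on the fallback, and its geometric setup cannot occur. Suppose $m$ is a nonseparating meridian, so that $\mathcal{H}(S)\cong\mathcal{H}^2_{g-1}$. Then $x$, $y$ and a boundary copy of $m$ never cobound a pair of pants $P$. Indeed, $P$ lies on one side $R$ of $x\cup y$. With the orientations induced from $P\subset R$, you get both $[x]+[y]+[m]=0$ and $[x]+[y]=[\partial R]=0$ in $H_1(\Sigma_g;\Z)$. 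Hence $[m]=0$, i.e.\ $m$ is separating, a contradiction. So $T_xT_y^{-1}$ is not in the kernel of $\mathcal{H}^2_{g-1}\to\mathcal{H}^1_{g-1}$ at all. The paper instead uses a \emph{separating} meridian $z$ such that $x,y,z$ cobound a pair of pants. Then $z$ cuts off a subsurface $T\cong\Sigma^1_k$ containing $x\cup y$, and $T_xT_y^{-1}$ is a push in the Birman kernel $\pi_1(U\Sigma_k)$ of $\mathcal{H}(T)\to\mathcal{H}(\overline{T})$.

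The more essential gap is hidden in ``running the five-term sequence exactly as in Proposition~\ref{proposition:twistsvanish}.'' Restricted to $\Gamma_T$, the kernel is $\Gamma_T\cap\pi_1(U\Sigma_k)$. In Proposition~\ref{proposition:twistsvanish}, replacing the kernel by a finite-index subgroup was harmless because the kernel was $\Z$. For a surface-bundle group, a finite-index subgroup generally has larger rational $H_1$, about which a coinvariant computation for $H_1(\pi_1(U\Sigma_k);\Q)$ says nothing.

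This is exactly where the hypothesis $\mathcal{HI}_g\subseteq\Gamma$ must enter, and your fallback never uses it beyond placing $T_xT_y^{-1}$ in $\Gamma$. Because $z$ is separating, every push of $z$ (including $T_z$) acts trivially on $H_1(\Sigma_g;\Z)$. So the \emph{entire} Birman kernel lies in $\mathcal{HI}_g\subseteq\Gamma$. With your nonseparating $m$ this fails: pushes of a boundary copy of $m$ have the form $T_{\alpha_1}T_{\alpha_2}^{-1}$ with $[\alpha_1]-[\alpha_2]=\pm[m]\neq 0$. These are generally not in the Torelli group, so they need not lie in $\Gamma$.

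Once the full kernel is in $\Gamma_T$, the paper finishes as follows. The fiber class dies rationally by the boundary relation. The coinvariants are then those of $H_1(\Sigma_k;\Q)$, which is the standard $\operatorname{SL}_k(\Q)$-representation plus its dual, both nontrivial and irreducible. Lemma~\ref{lemma:coinvariantsSL} kills these, since $\Gamma_{\overline{T}}$ contains a finite-index subgroup of $\operatorname{SL}_k(\Z)$. This is also cleaner than redoing Proposition~\ref{proposition:H1handlebody} with powers. There, your list of powers of $T_{\mathfrak{b}_i}$ and $h_i$ does not reach $\tilde{a}_g$. Moreover, the powers of the handle swap $f$ that lie in $\Gamma$ may act trivially on homology.
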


\begin{proof}
  Let $T_x T_y^{-1}$ be an annulus bounding pair twist. Then $x \cup y$ bounds a subsurface $S \cong \Sigma_h^2$ for some $2 < h < g$. Additionally, there exists a separating meridian $z$ that bounds a subsurface $T \cong \Sigma_h^1$ that contains $S$. An example is illustrated in Figure \ref{figure:BPM}.

  \begin{figure}[H]
    \centering
    \includegraphics[scale=.4]{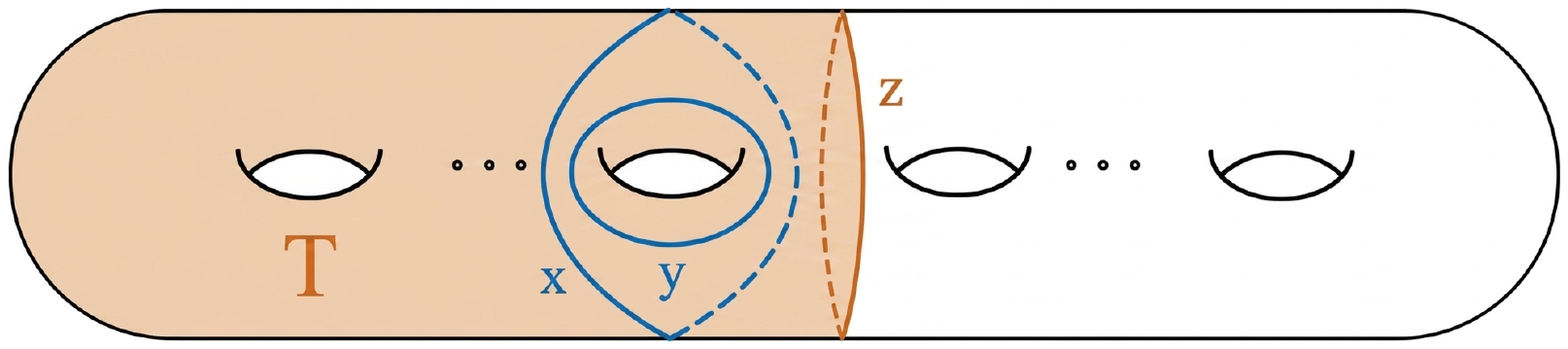}
    \caption{Bounding pair annulus twist and subsurface $T$}
    \label{figure:BPM}
\end{figure}

\noindent In particular, since $z$ bounds a disk in $\mathcal{V}_g$, the subsurface $T$ bounds a handlebody homeomorphic to $\mathcal{V}_h^1$. We have an inclusion $i: \mathcal{H}(T) \rightarrow \mathcal{H}_g$ and we let $\Gamma_T := i^{-1}(\Gamma)$. It suffices to show that $[T_x T_y^{-1}] = 0$ in $H_1(\Gamma_T; \Q)$. Note that $x$ and $y$ become homotopic in $\overline{T} \cong \Sigma_h$ after capping off the boundary with a disk, so $T_x T_y^{-1}$ dies in $\mathcal{H}(\overline{T})$. Recalling the Birman exact sequence from Section \ref{section:birman}, we have $$1 \rightarrow \pi_1(U \Sigma_h) \rightarrow \mathcal{H}(T) \rightarrow \mathcal{H}(\overline{T}) \rightarrow 1,$$ with $T_x T_y^{-1} \in \pi_1(U \Sigma_h)$. Restricting to $\Gamma_T$, we have the short exact sequence $$1 \rightarrow \pi_1(U \Sigma_h) \rightarrow \Gamma_T \rightarrow \Gamma_{\overline{T}} \rightarrow 1.$$ Here, we used the assumption that $\Gamma$ contains $\mathcal{HI}_g$ (which contains $\pi_1(U \Sigma_g)$), and so $\Gamma_T$ contains $\pi_1(U \Sigma_h)$. The associated five-term exact sequence contains the terms $$H_1(\pi_1(U \Sigma_h); \Q)_{\Gamma_{\overline{T}}} \rightarrow H_1(\Gamma_T; \Q) \rightarrow H_1(\Gamma_{\overline{T}}; \Q) \rightarrow 0.$$ It suffices to show that $H_1(\pi_1(U \Sigma_h); \Q)_{\Gamma_{\overline{T}}} = 0$. To prove this, we use another version of the Birman exact sequence which involves capping off boundaries with a \emph{marked} disk. See \cite[Chapter 4.2]{Primer}. Taking rational abelianization of the short exact sequence $$1 \rightarrow \Z \rightarrow \pi_1(U \Sigma_h) \rightarrow \pi_1(\Sigma_h) \rightarrow 1$$ gives us the exact sequence $$\Q \rightarrow H_1(\pi_1(U \Sigma_h); \Q) \rightarrow H_1(\pi_1(\Sigma_h); \Q) \rightarrow 0.$$ The map $\Q \rightarrow H_1(\pi_1(U \Sigma_h); \Q)$ is the zero map because of the boundary relation, so we get an isomorphism $$H_1(\pi_1(U \Sigma_h); \Q) \cong H_1(\pi_1(\Sigma_h); \Q).$$ Taking coinvariants with respect to the action of $\Gamma_{\overline{T}}$, we get $$H_1(\pi_1(U \Sigma_h); \Q)_{\Gamma_{\overline{T}}} \cong H_1(\pi_1(\Sigma_h); \Q)_{\Gamma_{\overline{T}}}.$$  The vector space $H_1(\pi_1(\Sigma_h); \Q) \cong \Q^{2h}$ decomposes into two nontrivial irreducible $\operatorname{SL}_h(\Q)$-representations. By Equation \ref{equation:SLZ}, the group $\Gamma_{\overline{T}}$ contains a finite-index subgroup of $\operatorname{SL}_h(\Z)$. By Lemma \ref{lemma:coinvariantsSL}, we have $H_1(\pi_1(\Sigma_h); \Q)_{\Gamma_{\overline{T}}} = 0$, and our result follows.
\end{proof}

\subsection{Proof of Theorem \ref{maintheorem:torelli}} We are now ready to prove Theorem \ref{maintheorem:torelli}.

\begin{proof}[Proof of Theorem \ref{maintheorem:torelli}] Let $f$ be in $\mathcal{HI}_g$. Since $f$ is a product of bounding pair annulus twists, the class $[f]$ vanishes in $H_1(\Gamma; \Q)$ by Lemma \ref{lemma:BPMvanish}. The five-term exact sequence of the extension $$1 \rightarrow \mathcal{HI}_g \rightarrow \Gamma \rightarrow \Psi(\Gamma) \rightarrow 1$$ contains the terms $$H_1(\mathcal{HI}_g; \Q)_{\Psi(\Gamma)} \xrightarrow{i} H_1(\Gamma; \Q) \rightarrow H_1(\Psi(\Gamma); \Q) \rightarrow 0.$$ Since all elements of $\mathcal{HI}_g$ vanish in $H_1(\Gamma; \Q)$, the map $i$ is the zero map and we have $$H_1(\Gamma; \Q) = H_1(\Psi(\Gamma); \Q).$$ Since $\Psi(\Gamma)$ is finite-index in $\Psi(\mathcal{H}_g)$, Proposition \ref{proposition:urSp} tells us that $H_1(\Psi(\Gamma); \Q) = 0$, and our result follows. \end{proof} 

\section{Proof of Theorem D}

The Johnson kernel $\mathcal{K}_g$ is the infinite-index subgroup of $\mathcal{I}_g$ generated by \emph{separating twists}: Dehn twists about separating curves. The \emph{handlebody Johnson kernel}, denoted $\mathcal{HK}_g$, is the intersection $\mathcal{H}_g \cap \mathcal{K}_g$. If $\Gamma$ is a subgroup of $\mathcal{H}_g$, define $K(\Gamma)$ to be the subgroup of $\Gamma$ generated by the set $$\{ T_{\gamma}^n \in \Gamma \; | \; \gamma \text{ is a separating meridian} \}.$$ Note that $K(\Gamma)$ is a subgroup of $\Gamma \cap \mathcal{HK}_g$. In this section, we will prove the following theorem.

\begin{theorem*}[Theorem \ref{maintheorem:johnsonkernel}]
    For $g \geq 4$, let $\Gamma$ be a finite-index subgroup of $\mathcal{H}_g$ for which $[\Gamma \cap \mathcal{HK}_g : K(\Gamma)] < \infty$. Then $H_1(\Gamma; \Q) = 0$.
\end{theorem*}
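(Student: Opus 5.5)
We follow the pattern of the proofs of Theorem \ref{maintheorem:B} and Theorem \ref{maintheorem:torelli}, filtering $\Gamma$ in two stages: first by the handlebody Torelli group, then by the Johnson kernel inside it. The extension we use first is
$$1 \rightarrow \Gamma \cap \mathcal{HI}_g \rightarrow \Gamma \rightarrow \Psi(\Gamma) \rightarrow 1.$$
Its five-term exact sequence gives
$$H_1(\Gamma \cap \mathcal{HI}_g;\Q)_{\Psi(\Gamma)} \rightarrow H_1(\Gamma;\Q) \rightarrow H_1(\Psi(\Gamma);\Q) \rightarrow 0.$$
By Proposition \ref{proposition:urSp}, the right-hand term vanishes. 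It therefore suffices to show that every element of $\Gamma \cap \mathcal{HI}_g$ has vanishing class in $H_1(\Gamma;\Q)$.

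For this we apply the same sequence to the extension
$$1 \rightarrow \Gamma \cap \mathcal{HK}_g \rightarrow \Gamma \cap \mathcal{HI}_g \rightarrow Q \rightarrow 1,$$
where $Q$ is the image of $\Gamma \cap \mathcal{HI}_g$ under the Johnson homomorphism $\tau$. Here we use that $\mathcal{K}_g = \ker \tau$ on $\mathcal{I}_g$, so $Q$ is a finitely generated free abelian group.

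\textbf{Step 1: the Johnson kernel part.} By hypothesis $K(\Gamma)$ has finite index in $\Gamma \cap \mathcal{HK}_g$. Lemma \ref{lemma:finiteindexhomology} then shows that $H_1(K(\Gamma);\Q) \to H_1(\Gamma\cap\mathcal{HK}_g;\Q)$ is surjective. The generators of $K(\Gamma)$ are powers of separating meridian twists, and these vanish in $H_1(\Gamma;\Q)$ by Proposition \ref{proposition:twistsvanish}. Hence every element of $\Gamma \cap \mathcal{HK}_g$ has a power lying in $[\Gamma,\Gamma]$.

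\textbf{Step 2: the Johnson image part.} It remains to show that the image of $(\Gamma\cap\mathcal{HI}_g)\otimes\Q$ in $H_1(\Gamma;\Q)$, which factors through $Q\otimes\Q$ modulo the previous step, is zero. Conjugation by $\Gamma$ makes $Q\otimes\Q \subset \tau(\mathcal{HI}_g)\otimes\Q \subset \wedge^3 H_\Q$ equivariant for the action of $\Psi(\Gamma)$. The map $\Gamma\cap\mathcal{HI}_g \to H_1(\Gamma;\Q)$ is $\Gamma$-invariant, so it factors through the coinvariants $(Q\otimes\Q)_{\Psi(\Gamma)}$. We then argue as in the proof of Lemma \ref{lemma:BPMvanish}. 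The group $\Psi(\Gamma)$ contains a finite-index subgroup of $\operatorname{SL}_g(\Z)$, embedded as in Equation \ref{equation:SLZ}. We would decompose $\wedge^3 H_\Q$, with $H_\Q = V \oplus V^*$ as an $\operatorname{SL}_g(\Q)$-module, into irreducibles and apply Lemma \ref{lemma:coinvariantsSL} to kill every nontrivial irreducible summand.

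\textbf{Main obstacle.} The difficulty is the trivial isotypic component. Summands such as $V\otimes\wedge^2 V^*$ and $\wedge^2 V\otimes V^*$ contain $\operatorname{SL}_g$-invariant pieces, namely contractions $V\otimes V^*\supset\Q$. Lemma \ref{lemma:coinvariantsSL} does not kill these, so the $\operatorname{SL}_g$ symmetry alone is not enough. For these we would use the additional unipotent part of $\Psi(\Gamma)$: the finite-index subgroup of $\operatorname{Sym}^2(\Z^g)$, acting by $a\mapsto a+Bb$-type transvections. Since it acts unipotently and nontrivially on the relevant summands, a vector in the invariant piece should be expressible as $s\cdot w - w$ for suitable $w$ in a neighboring summand.

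If this unipotent argument turns out to be messy, the fallback is geometric. We would exhibit explicit elements of $\Gamma\cap\mathcal{HI}_g$ whose $\tau$-images span the problematic direction, for example powers of bounding pair annulus twists supported in a subsurface $T$ as in Lemma \ref{lemma:BPMvanish}. We would then reuse that lemma's Birman exact sequence argument. This fallback must be adapted, because $\Gamma$ no longer contains all of $\pi_1(U\Sigma_h)$, only a finite-index subgroup of it. The hypothesis $g\ge4$ should be exactly what provides enough room for these subsurfaces and for Proposition \ref{proposition:twistsvanish}.
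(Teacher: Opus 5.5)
Your architecture is sound and uses the same ingredients as the paper, layered in the opposite order. The paper runs a single five-term sequence with kernel $\Gamma\cap\mathcal{HK}_g$ and handles the quotient via Lemma \ref{lemma:Qg}, which peels off $U\cong\mathcal{HI}_g/\mathcal{HK}_g$ and then $\Psi$. You peel off $\Psi(\Gamma)$ first, then filter $\Gamma\cap\mathcal{HI}_g$ by $\Gamma\cap\mathcal{HK}_g$ using Johnson's theorem $\ker\tau=\mathcal{K}_g$. Everything through the reduction to $(Q\otimes\Q)_{\Psi(\Gamma)}=0$ is correct. The gap is Step 2. Your ``main obstacle'' rests on a representation-theoretic miscomputation. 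You then leave the step to a unipotent argument that is only asserted (``should be expressible''), and to a geometric fallback that you concede does not apply as stated, since Lemma \ref{lemma:BPMvanish} uses $\Gamma\supseteq\mathcal{HI}_g$ essentially. So the proposal as written does not close.

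In fact there is no trivial isotypic component. The contraction on $\bigwedge^2V\otimes V^*$ lands in $V$, not in $\Q$. For $g\ge 4$ one has $\bigwedge^2V\otimes V^*\cong\Phi_{0,1,0,\dots,0,1}\oplus V$ and $V\otimes\bigwedge^2V^*\cong\Phi_{1,0,\dots,0,1,0}\oplus V^*$, and $\bigwedge^3V$ and $\bigwedge^3V^*$ are nontrivial irreducibles. To see this quickly, work over $\mathbb{C}$: a central element $\zeta I$ ($\zeta\in\mu_g$) acts on a summand with $p$ factors of $V$ and $q$ of $V^*$ by $\zeta^{p-q}$. Invariants would then force $g\mid p-q$, which is impossible when $p+q=3$ and $g\ge 4$.

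Also, on the closed surface $\tau$ takes values in $\bigwedge^3H/H$, not $\bigwedge^3H$. The copies of $V$ and $V^*$ above are exactly the image of $x\mapsto x\wedge\sum_i a_i\wedge b_i$, which is quotiented out. Since $Q$ has finite index in $\tau(\mathcal{HI}_g)$, we get $Q\otimes\Q=\tau(\mathcal{HI}_g)\otimes\Q\cong U\otimes\Q$. This is a sum of three nontrivial irreducibles by Proposition \ref{proposition:decomp}. Even without Morita's computation, Zariski density shows $Q\otimes\Q$ is an $\operatorname{SL}_g(\Q)$-subrepresentation of a sum of nontrivial irreducibles.

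Either way, apply Lemma \ref{lemma:coinvariantsSL} to the finite-index subgroup of $\operatorname{SL}_g(\Z)$ inside $\Psi(\Gamma)$. This gives $(Q\otimes\Q)_{\Psi(\Gamma)}=0$ directly, with no unipotent or geometric input. With that correction your argument is essentially the paper's.
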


We conjecture the following.

\begin{conjecture} \label{conjecture:rationalabelHI}
    The handlebody Johnson kernel $\mathcal{HK}_g$ is generated by separating meridian twists.
\end{conjecture}

If Conjecture \ref{conjecture:rationalabelHI} is true and $\Gamma$ contains $\mathcal{HK}_g$, then $K(\Gamma) = \mathcal{HK}_g$. In this case, we get the following corollary of Theorem \ref{maintheorem:johnsonkernel}, greatly strengthening Theorem \ref{maintheorem:torelli}.

\begin{corollary} \label{corollary:johnsonkernel}
    Assume Conjecture \ref{conjecture:rationalabelHI}. For $g \geq 4$, let $\Gamma$ be a finite-index subgroup of $\mathcal{H}_g$ that contains the handlebody Johnson kernel $\mathcal{HK}_g$. Then $H_1(\Gamma; \Q) = 0$.
\end{corollary}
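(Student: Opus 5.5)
The statement to prove is Corollary~\ref{corollary:johnsonkernel}. It follows directly from Theorem~\ref{maintheorem:johnsonkernel} once we check its hypothesis. Suppose $\Gamma \supseteq \mathcal{HK}_g$ is of finite index in $\mathcal{H}_g$ with $g \geq 4$, and assume Conjecture~\ref{conjecture:rationalabelHI}. Then $\Gamma \cap \mathcal{HK}_g = \mathcal{HK}_g$, and we must show $[\mathcal{HK}_g : K(\Gamma)] < \infty$. In fact we will show the two groups are equal.

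First, each separating meridian twist lies in $\mathcal{HK}_g$. Indeed, $T_\gamma$ is a Dehn twist about a separating curve, so it lies in $\mathcal{K}_g$. It lies in $\mathcal{H}_g$ by Proposition~\ref{prop:multitwistmembership}, since $\gamma$ bounds a disk.

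Second, for each such twist, $T_\gamma = T_\gamma^1 \in \mathcal{HK}_g \subseteq \Gamma$. So $T_\gamma^1$ is one of the generators $\{T_\gamma^n \in \Gamma \mid \gamma \text{ a separating meridian}\}$ of $K(\Gamma)$. Hence $K(\Gamma)$ contains every separating meridian twist.

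By the conjecture, these twists generate $\mathcal{HK}_g$, so $\mathcal{HK}_g \subseteq K(\Gamma)$. The reverse inclusion $K(\Gamma) \subseteq \Gamma \cap \mathcal{HK}_g$ was noted when $K(\Gamma)$ was defined. Therefore $K(\Gamma) = \Gamma \cap \mathcal{HK}_g$, the index $[\Gamma\cap\mathcal{HK}_g : K(\Gamma)]$ equals $1$, and Theorem~\ref{maintheorem:johnsonkernel} gives $H_1(\Gamma;\Q)=0$.

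None of these steps is a real obstacle: all the substance sits in Theorem~\ref{maintheorem:johnsonkernel} and in the conjecture. The only point needing care is the first step, confirming that each separating meridian twist really belongs to both $\mathcal{H}_g$ and $\mathcal{K}_g$. This is immediate from Proposition~\ref{prop:multitwistmembership} and the definition of $\mathcal{K}_g$.
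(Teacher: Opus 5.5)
Your proposal is correct and is the same argument as the paper's. The paper simply notes that, under Conjecture~\ref{conjecture:rationalabelHI}, $\Gamma \supseteq \mathcal{HK}_g$ forces $K(\Gamma) = \mathcal{HK}_g = \Gamma \cap \mathcal{HK}_g$, and then applies Theorem~\ref{maintheorem:johnsonkernel}; you additionally check that each separating meridian twist lies in $\Gamma$ and hence in $K(\Gamma)$.
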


Before we prove Theorem \ref{maintheorem:johnsonkernel}, we introduce some background and lemmas. 

\subsection{The quotient $\mathcal{HI}_g / \mathcal{HK}_g$} Recall from Section \ref{section:torelli} that $H := H_1(\Sigma_g; \Z)$ is a free abelian group $\Z \langle a_1, \dots, a_g, b_1, \dots, b_g \rangle$. The group $\bigwedge^3 H$ contains a subgroup isomorphic to $H$ given by the injection $$x \mapsto x \wedge (\sum_{i=1}^g a_i \wedge b_i).$$
Define $V := \Z \langle a_1, \dots, a_g \rangle$ and its dual $V^* := \Z \langle b_1, \dots, b_g \rangle$. Then $H$ decomposes as $$H \cong V \oplus V^*,$$ and we have $$H_1(\mathcal{V}_g; \Z) \cong V.$$ We define the following subgroup of $(\bigwedge^3 H) / H$: $$U := [((\bigwedge^2 V) \otimes V^*) \oplus (V \otimes (\bigwedge^2 V^*)) \oplus (\bigwedge^3 V^*)] / (V \oplus V^*).$$ In particular, $U$ is the subspace of $(\bigwedge^3 H) / H$ which contains no ``triple-$a$" terms of the form $a_i \wedge a_j \wedge a_k$. We have the following proposition.

\begin{proposition}[{Morita, \cite[Lemma 2.5]{MoritaCasson}}]
    For $g \geq 3$, we have $$\mathcal{HI}_g / \mathcal{HK}_g \cong U.$$ Moreover, there is a natural $\Psi(\mathcal{H}_g)$-action of $\mathcal{H}_g$ on $U$ via its action on $H$.
\end{proposition}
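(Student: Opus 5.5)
The natural tool is the Johnson homomorphism. Recall that $\tau: \mathcal{I}_g \rightarrow (\bigwedge^3 H)/H$ is $\operatorname{Mod}(\Sigma_g)$-equivariant, and by Johnson's theorem its kernel is exactly $\mathcal{K}_g$. Restricting to $\mathcal{HI}_g$ gives an injection $\mathcal{HI}_g/\mathcal{HK}_g \hookrightarrow (\bigwedge^3 H)/H$, because $\mathcal{HK}_g = \mathcal{HI}_g \cap \mathcal{K}_g$. So it suffices to show that $\tau(\mathcal{HI}_g) = U$. Equivariance under $\mathcal{H}_g$ then gives the second claim: conjugation acts on $\mathcal{HI}_g/\mathcal{HK}_g$ through $\Psi(\mathcal{H}_g)$, and this matches the induced action on $(\bigwedge^3 H)/H$, which preserves $U$.

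\textbf{Containment $\tau(\mathcal{HI}_g) \subseteq U$.} By Omori's theorem, $\mathcal{HI}_g$ is generated by bounding pair annulus twists $T_xT_y^{-1}$ in which $x \cup y$ bounds a genus-one subsurface. Johnson's formula gives $\tau(T_xT_y^{-1}) = c_1 \wedge d_1 \wedge [x]$, where $\{c_1,d_1\}$ is a symplectic basis of the genus-one piece cut off by $x \cup y$. Since $x \cup y$ bounds an annulus in $\mathcal{V}_g$, we can choose the genus-one piece so that one of its curves is a meridian; that curve has homology class in $V^*$, because the $\mathfrak{b}_i$ bound disks. Every term of $c_1 \wedge d_1 \wedge [x]$ then contains at least one factor from $V^*$, so it has no triple-$a$ component.

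A cleaner invariant check is also available. The projection $\bigwedge^3 H \to \bigwedge^3 V$ (killing $V^*$) descends to $(\bigwedge^3 H)/H$, because $x \wedge \omega$ always has a $b$-factor and so maps to zero. This projection vanishes on $\tau(\mathcal{HI}_g)$. Indeed, the composite is the ``handlebody'' Johnson-type map recording the action on $\pi_1(\mathcal{V}_g)$ modulo the third term of the lower central series, and elements of $\mathcal{HI}_g$ act on $H_1(\mathcal{V}_g)$ trivially with the appropriate vanishing. I would verify this directly on Omori's generators as above.

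\textbf{Reverse containment $U \subseteq \tau(\mathcal{HI}_g)$.} Here I would exhibit explicit bounding pair annulus twists hitting a generating set of $U$ as an abelian group. The set would consist of classes $a_i \wedge a_j \wedge b_k$, $a_i \wedge b_j \wedge b_k$ and $b_i \wedge b_j \wedge b_k$, taken modulo $H$. Using $\Psi(\mathcal{H}_g) \supseteq \operatorname{GL}_g(\Z)$ and equivariance, it is enough to realize one representative of each $\operatorname{GL}_g(\Z)$-orbit type, together with their differences. One can also apply Proposition \ref{prop:multitwistmembership} to products of twists along annulus pairs and separating meridians, combined with $H$-quotient relations. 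Since the paper cites Morita's Lemma 2.5, the expected shortcut is simply to quote that computation.

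The main obstacle is the surjectivity onto $U$, in particular producing the $\bigwedge^3 V^*$ and $a\wedge a\wedge b$ classes integrally rather than only rationally. The bookkeeping of the quotient by $H$ is also delicate: one must check that the $V \oplus V^*$ appearing in the definition of $U$ is exactly the image of $H$ intersected with that subspace. If integrality failed, rational surjectivity would still suffice for the applications to $H_1(\Gamma;\Q)$.
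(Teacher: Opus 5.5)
The reduction is the right one; the paper itself only cites Morita. Johnson's theorem $\ker\tau=\mathcal K_g$ gives $\mathcal{HI}_g/\mathcal{HK}_g\cong\tau(\mathcal{HI}_g)\subseteq(\bigwedge^3H)/H$. The action statement then follows from equivariance, because $\Psi(\mathcal H_g)$ preserves $V^*$ and $\omega=\sum_i a_i\wedge b_i$, hence $U$.

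The gap is that the whole content of the proposition is the equality $\tau(\mathcal{HI}_g)=U$, and you never prove $U\subseteq\tau(\mathcal{HI}_g)$. You exhibit no elements, so as written you have at best an injection into $U$, not even a rational isomorphism. The plan is also incomplete as stated. Block-diagonal $\operatorname{GL}_g(\Z)$ preserves the splitting of $U$ by number of $b$'s, so you would need a seed with nonzero $\bigwedge^3V^*$-component, and you produce none. The missing ingredient is the unipotent part of $\Psi(\mathcal H_g)$. Concretely:
\begin{itemize}
\item Band-summing $x\in\{\mathfrak a_j,\mathfrak b_j\}$ ($j\neq i$) with the separating meridian cutting off the $i$-th handle gives genus-one bounding pair annulus twists with $\tau=\pm a_i\wedge b_i\wedge a_j$ and $\pm a_i\wedge b_i\wedge b_j$.
\item Conjugate by an element of $\mathcal H_g$ acting by $a_i\mapsto a_i+a_j$, $b_j\mapsto b_j-b_i$, which exists by Hirose. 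Subtracting gives $a_j\wedge b_i\wedge a_k$ and $a_j\wedge b_i\wedge b_k$ for distinct $i,j,k$.
\item Conjugate the latter by $T_{\mathfrak b_j}$, which acts by $a_j\mapsto a_j\pm b_j$. Subtracting gives $b_i\wedge b_j\wedge b_k$.
\end{itemize}
These elements span $((\bigwedge^2V)\otimes V^*)\oplus(V\otimes\bigwedge^2V^*)\oplus\bigwedge^3V^*$ over $\Z$. So surjectivity holds integrally for $g\geq 3$, and your integrality worry disappears.

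Your argument for $\tau(\mathcal{HI}_g)\subseteq U$ also has a false step. The fact that $x\cup y$ bounds an annulus does not let you put a meridian in the genus-one piece $S$. Here is a counterexample for $g\geq 4$.
\begin{itemize}
\item Take a genus-two handlebody $M$ and a nonseparating curve $c\subset\partial M$ representing $s^2t^2\in F(s,t)=\pi_1(M)$. This word lies in no proper free factor, so $c$ meets every essential disk of $M$.
\item Glue $M$, along a neighbourhood of $c$, to a genus-$(g-1)$ handlebody along an annulus with primitive core. The result is $\mathcal V_g$, and the gluing annulus $A$ is incompressible.
\item By an innermost-circle argument, a meridian in $S=\partial M\setminus N(c)$ would give an essential disk of $M$ missing $c$, which is impossible.
\end{itemize}

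Your ``cleaner'' check also misidentifies the map. The action on $\pi_1(\mathcal V_g)$ modulo the third term of its lower central series is recorded by the $(\bigwedge^2V)\otimes V^*$-component, not the $\bigwedge^3V$-component. Two arguments do work:
\begin{itemize}
\item Every $f\in\mathcal{HI}_g$ preserves $N=\ker(\pi_1\Sigma_g\to\pi_1\mathcal V_g)$. So $f(\gamma)\gamma^{-1}$ maps to $0$ in $\bigwedge^2V$ for $\gamma\in N$, and the map $H\to\bigwedge^2H\to\bigwedge^2V$ determined by $\tau(f)$ kills $V^*$. On $b_i$ this map is, up to sign, the contraction of the $\bigwedge^3V$-component of $\tau(f)$ with $b_i$. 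Hence that component vanishes, and the indeterminacy $H\wedge\omega$ has no such component. This needs no Omori.
\item Alternatively, keep Omori's generators. $S\cup A$ bounds a submanifold of $\mathcal V_g$ with genus-two boundary. By half-lives-half-dies, the image of $H_1(S;\Q)$ in $H_1(\mathcal V_g;\Q)$ has dimension at most $2$. So $c_1\wedge d_1\wedge[x]$ has no triple-$a$ part.
\end{itemize}
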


\subsection{The group $U \otimes \Q$ as an $\operatorname{SL}_g(\Q)$-representation} Recall that $\mathcal{H}_g$ acts on $H = V \oplus V^*$ by the symplectic matrices $$\Psi(\mathcal{H}_g) = \left\{  \begin{bmatrix} A & 0 \\ B & (A^t)^{-1} \end{bmatrix} \; \Big| \; A \in \operatorname{GL}_g(\Z), \; B = B^t \right\}.$$ This action extends to an action of the Zariski closure of $\Psi(\mathcal{H}_g)$ on $H_1(U; \Q) \cong U \otimes \Q$. Restricting to the semisimple subgroup \begin{equation} \label{equation:SL} \left\{ \begin{bmatrix} A & 0 \\ 0 & (A^t)^{-1} \end{bmatrix} \; \Big| \; A \in \operatorname{SL}_g(\Q) \right\} \cong \operatorname{SL}_g(\Q), \end{equation} we can view $U \otimes \Q$ as an $\operatorname{SL}_g(\Q)$-representation. We have the following proposition, which can be checked using LiE \cite{LiE}.

\begin{proposition} \label{proposition:decomp}
    For $g \geq 3$, the group $U \otimes \Q$ decomposes into a direct sum of three irreducible $\operatorname{SL}_g(\Q)$ representations.\footnote{Let $\Phi_{w_1, \dots, w_{g-1}}$ be the irreducible $\operatorname{SL}_g(\Q)$-representation with highest weight $(w_1, \dots, w_{g-1})$. See \cite[Lecture 15]{FH}. Then we have $U \otimes \Q \cong \Phi_{0,1,0, \dots, 0,1} \oplus \Phi_{1,0, \dots, 0,1,0} \oplus \Phi_{0, \dots, 0,1,0,0}$.}
\end{proposition}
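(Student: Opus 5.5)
The plan is to decompose $U\otimes\Q$ by hand as an $\operatorname{SL}_g(\Q)$-module, using the standard rules for tensor products of exterior powers with duals. Write $V_\Q = V\otimes\Q$, the standard representation, and identify $V^*_\Q$ with its dual. Since $H\otimes\Q = V_\Q\oplus V_\Q^*$, the space $\bigwedge^3 H_\Q$ splits into four summands:
$$\textstyle\bigwedge^3 V_\Q \,\oplus\, (\bigwedge^2 V_\Q\otimes V_\Q^*) \,\oplus\, (V_\Q\otimes \bigwedge^2 V_\Q^*) \,\oplus\, \bigwedge^3 V_\Q^*.$$
The space $U\otimes\Q$ is the sum of the last three, modulo the image of $H_\Q$ under $x\mapsto x\wedge\omega$, where $\omega=\sum a_i\wedge b_i$.

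This image is easy to locate. For $x\in V_\Q$, the element $x\wedge\omega$ lies in $\bigwedge^2 V_\Q\otimes V_\Q^*$. Similarly, for $x\in V_\Q^*$ it lies in $V_\Q\otimes\bigwedge^2V_\Q^*$. The map is injective and $\operatorname{SL}_g$-equivariant, because $\omega$ corresponds to the invariant pairing, i.e. the identity in $V\otimes V^*$.

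Next I would decompose each summand.

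\textbf{The middle summands.} There is the contraction map $\bigwedge^2 V_\Q\otimes V_\Q^*\to V_\Q$, given by $u\wedge v\otimes \phi\mapsto \phi(v)u-\phi(u)v$. It is surjective, and its kernel is the irreducible module of highest weight $\omega_2+\omega_{g-1}$, namely $\Phi_{0,1,0,\dots,0,1}$. This is the standard Pieri/Littlewood--Richardson fact: $\bigwedge^2 V\otimes V^*\cong \bigwedge^2V\otimes\bigwedge^{g-1}V$ decomposes as $\Phi_{\omega_2+\omega_{g-1}}\oplus \Phi_{\omega_1}$, since $g\ge 3$ guarantees only these two Young diagrams occur.

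The composite $V_\Q\to \bigwedge^2V_\Q\otimes V_\Q^*\to V_\Q$ is a nonzero scalar, namely $\pm(g-1)$. So the copy of $V_\Q$ we quotient by is a complement to the kernel, and the quotient is $\Phi_{0,1,0,\dots,0,1}$, which is irreducible.

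Dually, $V_\Q\otimes\bigwedge^2V_\Q^*$ modulo $V^*_\Q$ is $\Phi_{1,0,\dots,0,1,0}$.

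\textbf{The last summand.} $\bigwedge^3 V_\Q^*\cong\bigwedge^{g-3}V_\Q$ is irreducible with highest weight $\omega_{g-3}$, which is $\Phi_{0,\dots,0,1,0,0}$. For $g=3$ this is the trivial representation, and the statement still holds in that degenerate sense.

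Combining these, $U\otimes\Q$ is a direct sum of three irreducibles. These are pairwise non-isomorphic when $g\ge 4$, though that is not needed for the statement.

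The main obstacle is the middle-summand step: checking that the kernel of contraction is irreducible, and that the embedded copy of $H$ maps isomorphically onto the $V$-isotypic part (so the nonzero scalar). I would handle the first by a highest-weight argument. The vector $a_1\wedge a_2\otimes b_g$ is killed by the raising operators and has weight $\omega_2+\omega_{g-1}$. A dimension count then finishes it: $\binom g2 g - g$ against the Weyl dimension formula, or simply the Littlewood--Richardson rule.

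The scalar is a one-line computation. Take $x=a_1$, so $a_1\wedge\omega=\sum_{i\ne1} a_1\wedge a_i\otimes b_i$. Its contraction is $\sum_{i\neq 1}(-a_1)=-(g-1)a_1$, which is nonzero.

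Alternatively, as the paper notes, all of this can be confirmed mechanically with LiE.
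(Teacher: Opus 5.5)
Your argument is correct, but it takes a different route from the paper. The paper gives no argument beyond saying the decomposition can be checked with LiE, and records the highest weights in the footnote. You instead write
$U\otimes\Q=(\bigwedge^2V_{\Q}\otimes V_{\Q}^*)/V_{\Q}\oplus(V_{\Q}\otimes\bigwedge^2V_{\Q}^*)/V_{\Q}^*\oplus\bigwedge^3V_{\Q}^*$.
You identify the kernels of the two contraction maps with $\Phi_{0,1,0,\dots,0,1}$ and its dual $\Phi_{1,0,\dots,0,1,0}$, either by Pieri or by your highest-weight vector $a_1\wedge a_2\otimes b_g$ plus the count $g\binom{g}{2}-g=\tfrac12 g(g-2)(g+1)$, which does agree with the hook-content formula. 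Schur's lemma and the nonzero scalar then show that the embedded copy of $H$ is a complement to these kernels. With your own contraction formula the scalar is $+(g-1)$ rather than $-(g-1)$, but only its nonvanishing matters.

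The LiE route is brief, but a LiE computation only certifies specific ranks, whereas your argument is uniform in $g\ge 3$. Your route also makes the case $g=3$ visible: there $\bigwedge^3V_{\Q}^*$ is the trivial $\operatorname{SL}_3(\Q)$-representation. It is still irreducible, so the proposition holds. However, Lemma~\ref{lemma:Qg}, which is stated for $g\ge 3$, then cannot kill this summand by Lemma~\ref{lemma:coinvariantsSL} alone, since that lemma needs nontrivial irreducibles. At $g=3$ one must also use the unipotent part of $\overline{\Psi}(Q'_g)$: for example, $a_1\mapsto a_1+Nb_1$ moves $a_1\wedge b_2\wedge b_3$ by $N\,b_1\wedge b_2\wedge b_3$. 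This does not affect Theorem~\ref{maintheorem:johnsonkernel}, which assumes $g\ge 4$.
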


\subsection{Finite-index subgroups of $\mathcal{H}_g / \mathcal{HK}_g$} Define $Q_g := \mathcal{H}_g / \mathcal{HK}_g$. Since $\mathcal{HK}_g$ is in its kernel, the symplectic representation $\Psi$ factors through $Q_g$ to give us the surjective map $\overline{\Psi}: Q_g \rightarrow \Psi(\mathcal{H}_g)$.

\begin{lemma} \label{lemma:Qg}
    For $g \geq 3$, let $Q'_g$ be a finite-index subgroup of $Q_g$. Then we have $H_1(Q'_g; \Q) = 0$.
\end{lemma}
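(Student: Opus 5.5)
We argue via the short exact sequence
$$1 \rightarrow \mathcal{HI}_g/\mathcal{HK}_g \rightarrow Q_g \xrightarrow{\overline{\Psi}} \Psi(\mathcal{H}_g) \rightarrow 1,$$
where the kernel is identified with $U$ by Morita's proposition. Restricting to $Q'_g$ gives the extension
$$1 \rightarrow U' \rightarrow Q'_g \rightarrow \overline{\Psi}(Q'_g) \rightarrow 1,$$
with $U' := Q'_g \cap U$. Here $U'$ has finite index in $U$, and $\overline{\Psi}(Q'_g)$ has finite index in $\Psi(\mathcal{H}_g)$. The five-term exact sequence contains
$$H_1(U'; \Q)_{\overline{\Psi}(Q'_g)} \rightarrow H_1(Q'_g; \Q) \rightarrow H_1(\overline{\Psi}(Q'_g); \Q) \rightarrow 0.$$
By Proposition \ref{proposition:urSp}, the right-hand term vanishes for $g \geq 3$. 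So it suffices to show that the coinvariants on the left vanish.

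Since $U'$ is a finite-index subgroup of the free abelian group $U$, we have $H_1(U';\Q) = U' \otimes \Q \cong U \otimes \Q$. The conjugation action of $\overline{\Psi}(Q'_g)$ on it is the restriction of the natural $\Psi(\mathcal{H}_g)$-action on $U$; this compatibility is the naturality statement in Morita's proposition. Coinvariants only get larger when we pass to a subgroup. So it is enough to kill the coinvariants under the subgroup $\Delta := \overline{\Psi}(Q'_g) \cap \operatorname{SL}_g(\Z)$, where $\operatorname{SL}_g(\Z)$ is embedded block-diagonally as in Equation \ref{equation:SLZ}. This $\Delta$ is a finite-index subgroup of $\operatorname{SL}_g(\Z)$.

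By Proposition \ref{proposition:decomp}, $U \otimes \Q$ is a direct sum of three irreducible $\operatorname{SL}_g(\Q)$-representations. Coinvariants commute with direct sums, so we may treat each summand $W$ separately. Each $W$ is nontrivial: the highest weights listed in the footnote are nonzero, and for $g \geq 3$ each is a nonzero weight. Lemma \ref{lemma:coinvariantsSL} then gives $W_\Delta = 0$, so $(U\otimes\Q)_\Delta = 0$. Hence $(U\otimes \Q)_{\overline{\Psi}(Q'_g)} = 0$, and therefore $H_1(Q'_g;\Q)=0$.

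The main obstacle is to verify carefully that the $\Delta$-action on $H_1(U';\Q)$ agrees with the restriction of the algebraic $\operatorname{SL}_g(\Q)$-action on $U\otimes\Q$ used in Proposition \ref{proposition:decomp}. One point to check is that the conjugation action of $Q_g$ on the abelian normal subgroup $U$ factors through $\Psi(\mathcal{H}_g)$. This holds because $U$ is abelian, so $U$ acts trivially on itself by conjugation. A second point is that Morita's identification intertwines this action with the induced action on $\bigwedge^3 H / H$. A further point is the degenerate small case $g=3$: there some highest weights in the footnote collapse, and I would check that no summand becomes the trivial representation. For example, $\bigwedge^3 V^*$ is trivial for $g=3$, but it appears only after quotienting by $V^*$, and the footnote's list excludes the trivial weight.
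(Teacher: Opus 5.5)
You follow the paper's route: restrict $1\to U\to Q_g\to\Psi(\mathcal{H}_g)\to 1$ to $Q'_g$, and kill the quotient term with Proposition~\ref{proposition:urSp}. You then kill the coinvariants of $U\otimes\Q$ using only a finite-index $\Delta\le\operatorname{SL}_g(\Z)$ and Lemma~\ref{lemma:coinvariantsSL}, summand by summand. For $g\ge 4$ this is correct.

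At $g=3$, however, your claim that every summand is nontrivial is false. You raised exactly this worry and then resolved it the wrong way.
\begin{itemize}
\item For $g=3$, $\bigwedge^3 V^*\otimes\Q=\Q\, b_1\wedge b_2\wedge b_3$, and $\operatorname{diag}(A,(A^t)^{-1})$ acts on it by $\det(A)^{-1}=1$.
\item Quotienting by $H$ does not touch this piece. Every term of $x\wedge\omega$, with $\omega=\sum_i a_i\wedge b_i$, contains some $a_i$. So the triple-$b$ coefficient is a well-defined, surjective, $\operatorname{SL}_3$-invariant functional $U\otimes\Q\to\Q$.
\item Hence for $g=3$, $U\otimes\Q$ is the sum of $\operatorname{Sym}^2V$, $\operatorname{Sym}^2V^*$ and a copy of the trivial representation. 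In the footnote, the third weight has its $1$ in slot $g-3=0$, i.e.\ it is the zero weight.
\end{itemize}
Consequently $(U\otimes\Q)_\Delta\cong\Q\neq 0$ for every $\Delta\le\operatorname{SL}_3(\Z)$. Your reduction to $\Delta$ therefore reduces to a false statement. The paper's written proof applies Lemma~\ref{lemma:coinvariantsSL} in the same way and has the same hole at $g=3$. This is harmless for Theorem~\ref{maintheorem:johnsonkernel}, which only uses $g\ge 4$.

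The missing ingredient is the unipotent part of $\overline{\Psi}(Q'_g)$, which you discarded when passing to $\Delta$.
\begin{itemize}
\item Let $h\in\Psi(\mathcal{H}_3)$ be the matrix with $A=I$ and $B=E_{11}$. It sends $a_1\mapsto a_1+b_1$ and fixes the other basis vectors.
\item Since $\overline{\Psi}(Q'_3)$ has finite index, $h^N\in\overline{\Psi}(Q'_3)$ for some $N\ge 1$.
\item For $x=a_1\wedge b_2\wedge b_3\in U$ we get $h^N(x)-x=N\,b_1\wedge b_2\wedge b_3$. So the trivial summand dies in $(U\otimes\Q)_{\overline{\Psi}(Q'_3)}$.
\end{itemize}
The coinvariants $(U\otimes\Q)_{\overline{\Psi}(Q'_3)}$ are a quotient of $(U\otimes\Q)_\Delta$. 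Once Lemma~\ref{lemma:coinvariantsSL} kills the two nontrivial summands, $(U\otimes\Q)_\Delta$ is spanned by the class of $b_1\wedge b_2\wedge b_3$. So the coinvariants vanish, and the lemma holds for $g=3$ as well.
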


\begin{proof} Recall that we have $\mathcal{HI}_g / \mathcal{HK}_g \cong U$. Restricting the short exact sequence $$1 \rightarrow U \rightarrow Q_g \rightarrow \Psi(\mathcal{H}_g) \rightarrow 1 $$ to $Q'_g$, we get $$1 \rightarrow B \rightarrow Q'_g \rightarrow \overline{\Psi}(Q'_g) \rightarrow 1.$$ The corresponding five-term exact sequence contains the exact sequence $$H_1(B; \Q)_{\overline{\Psi}(Q'_g)} \rightarrow H_1(Q'_g; \Q) \rightarrow H_1(\overline{\Psi}(Q'_g); \Q) \rightarrow 0.$$

Since $\overline{\Psi}(Q'_g)$ is finite-index in $\Psi(\mathcal{H}_g)$, we have $$H_1(\overline{\Psi}(Q'_g); \Q) = 0$$ by Proposition \ref{proposition:urSp}. It remains to show that $H_1(B; \Q)_{\overline{\Psi}(Q'_g)} = 0$. The group $B \subset U$ is abelian and, because it is of finite index, we have\footnote{This follows from tensoring the exact sequence $0 \rightarrow B \rightarrow U \rightarrow U/B \rightarrow 0$ with $\Q$.} $$H_1(B; \Q) = H_1(U; \Q).$$
By Equation \ref{equation:SL} and Proposition \ref{proposition:decomp}, the group $H_1(B; \Q)$ decomposes as a direct sum of three irreducible $\operatorname{SL}_g(\Q)$-representations. The group $\overline{\Psi}(Q'_g)$ contains a subgroup isomorphic to a finite-index subgroup of $\operatorname{SL}_g(\Z)$ (see Section \ref{section:finiteindexPsi}), so by Lemma \ref{lemma:coinvariantsSL} we have $$H_1(B; \Q)_{\overline{\Psi}(Q'_g)} = 0$$ and our result follows.
\end{proof}

\subsection{Proof of Theorem \ref{maintheorem:johnsonkernel}} We are now ready to prove Theorem \ref{maintheorem:johnsonkernel}.

\begin{proof}[Proof of Theorem \ref{maintheorem:johnsonkernel}]

    The last terms of the five-term exact sequence of the extension $$1 \rightarrow \Gamma \cap \mathcal{HK}_g \rightarrow \Gamma \rightarrow \Gamma / (\Gamma \cap \mathcal{HK}_g) \rightarrow 1$$ are $$H_1(\Gamma \cap \mathcal{HK}_g; \Q)_{\Gamma / (\Gamma \cap \mathcal{HK}_g)} \xrightarrow{i} H_1(\Gamma; \Q) \rightarrow H_1(\Gamma / (\Gamma \cap \mathcal{HK}_g) ; \Q) \rightarrow 0.$$ Since $\Gamma / (\Gamma \cap \mathcal{HK}_g)$ is a finite-index subgroup of $Q_g$, Lemma \ref{lemma:Qg} tells us $$H_1(\Gamma / (\Gamma \cap \mathcal{HK}_g); \Q) = 0.$$ By assumption, we have $[\Gamma \cap \mathcal{HK}_g : K(\Gamma)] < \infty$. Then by Lemma \ref{lemma:finiteindexhomology}, the map $$H_1(K(\Gamma); \Q) \rightarrow H_1(\Gamma \cap \mathcal{HK}_g; \Q)$$ is surjective. The group $K(\Gamma)$ is generated by separating meridian twists, which by Theorem \ref{maintheorem:multitwistsdie} vanish in $H_1(\Gamma; \Q)$ when $g \geq 4$. Hence the map $i$ is the zero map and our result follows. \end{proof}

\appendix

\section{An alternate proof of Corollary \ref{corollary:kernelaction}} \label{appendixKernelAction}

In this section, we include an alternate proof of Corollary \ref{corollary:kernelaction} without invoking Theorem \ref{maintheorem:B}. A similar argument can be used to prove Theorem \ref{maintheorem:torelli} in an alternate way. We restate Corollary \ref{corollary:kernelaction}.

\begin{proposition}[Corollary \ref{corollary:kernelaction}]
    For $g \geq 4$, let $\Gamma$ be a finite-index subgroup of $\mathcal{H}_g$ such that the kernel of the action of $\Gamma$ on $\pi_1(\mathcal{V}_g)$ is generated by meridian multitwists. Then $H_1(\Gamma; \Q) = 0$.
\end{proposition}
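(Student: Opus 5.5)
We prove the statement directly from the five-term exact sequence, using Theorem \ref{maintheorem:multitwistsdie} but not Theorem \ref{maintheorem:B}. By Section \ref{section:actionpi1}, the kernel of the action of $\Gamma$ on $\pi_1(\mathcal{V}_g)$ is $\Gamma \cap \mathcal{T}_g$. Its image $\Gamma/(\Gamma \cap \mathcal{T}_g)$ is a finite-index subgroup of $\operatorname{Out}(F_g)$. The extension
$$1 \rightarrow \Gamma \cap \mathcal{T}_g \rightarrow \Gamma \rightarrow \Gamma/(\Gamma\cap\mathcal{T}_g) \rightarrow 1$$
has a five-term exact sequence ending in
$$H_1(\Gamma \cap \mathcal{T}_g; \Q)_{\Gamma/(\Gamma\cap\mathcal{T}_g)} \xrightarrow{i} H_1(\Gamma;\Q) \rightarrow H_1(\Gamma/(\Gamma\cap\mathcal{T}_g);\Q) \rightarrow 0.$$
Since $\operatorname{Out}(F_g)$ has property (T) for $g \geq 4$, and property (T) passes to finite-index subgroups, the right-hand term vanishes. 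It therefore suffices to show that $i$ is the zero map.

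To show $i = 0$, use the hypothesis directly. The group $\Gamma \cap \mathcal{T}_g$ is generated by meridian multitwists $M_1, M_2, \dots$, each of which lies in $\Gamma$. The classes $[M_j]$ span $H_1(\Gamma\cap\mathcal{T}_g;\Q)$, so they also span its coinvariants. The map $i$ is induced by inclusion and sends the class of $M_j$ to $[M_j] \in H_1(\Gamma;\Q)$. By Theorem \ref{maintheorem:multitwistsdie}, each $[M_j]$ vanishes, since $M_j$ is a meridian multitwist in $\Gamma$ and $g \geq 4$. Hence $i$ kills a spanning set, so $i = 0$ and $H_1(\Gamma;\Q)=0$.

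There is essentially no obstacle. The one point to check is that "generated by meridian multitwists" means the generators themselves lie in $\Gamma$, so that Theorem \ref{maintheorem:multitwistsdie} applies to them directly. This holds automatically, because the generators belong to $\Gamma\cap\mathcal{T}_g \subset \Gamma$. Compared with the proof via Theorem \ref{maintheorem:B}, this argument avoids passing to the subgroup $T(\Gamma)$ and invoking the transfer (Lemma \ref{lemma:finiteindexhomology}).
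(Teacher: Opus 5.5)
Your proof is correct and uses the same two ingredients as the paper: Theorem~\ref{maintheorem:multitwistsdie} applied to the generators of $K=\Gamma\cap\mathcal{T}_g$, and the vanishing of $H_1(Q;\Q)$ for finite-index $Q$ in $\operatorname{Out}(F_g)$. The appendix argues by hand via Lemma~\ref{lemma:generators}, while you use the five-term exact sequence as in the proof of Theorem~\ref{maintheorem:B}, correctly dropping the transfer step; this is slightly cleaner, since exactness automatically accounts for the fact that a power $\tilde q^m$ of a lift is a product of commutators only up to an element of $K$, a point the appendix's elementwise argument glosses over.
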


First, we have the following lemma.

\begin{lemma} \label{lemma:generators}
    Consider an exact sequence of groups $$1 \rightarrow K \rightarrow G \rightarrow Q \rightarrow 1.$$ Let $\tilde{Q}$ be a set consisting of one lift $\tilde{q} \in G$ for each generator of $Q$. Then $G$ is generated by the set $K \cup \tilde{Q}$.
\end{lemma}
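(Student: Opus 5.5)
The plan is to show directly that the subgroup $G' := \langle K \cup \tilde{Q} \rangle$ equals $G$. Write $\pi : G \rightarrow Q$ for the quotient map. Fix an arbitrary element $x \in G$ and express its image $\pi(x)$ as a word in the chosen generators of $Q$ and their inverses, say $\pi(x) = q_1^{\epsilon_1} \cdots q_m^{\epsilon_m}$ with $\epsilon_j \in \{\pm 1\}$.

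Next, lift this word letter by letter using the chosen lifts, forming $y := \tilde{q}_1^{\epsilon_1} \cdots \tilde{q}_m^{\epsilon_m} \in G'$. Since $\pi$ is a homomorphism and $\pi(\tilde{q}_j) = q_j$, we have $\pi(y) = \pi(x)$. Hence $y^{-1} x \in \ker \pi$, and by exactness $\ker \pi = K$.

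Finally, $x = y \cdot (y^{-1}x)$ is a product of an element of $G'$ and an element of $K \subset G'$, so $x \in G'$. Therefore $G = G'$, which is the claim.

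There is no genuine obstacle here. The only point requiring care is that the word for $\pi(x)$ may involve inverses of generators; these are handled by the inverses of the lifts, which lie in $G'$ because $G'$ is a subgroup.
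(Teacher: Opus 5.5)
Your argument is correct and is essentially the paper's proof: the paper sets $G' = \langle K \cup \tilde{Q} \rangle$, notes that $1 \to K \to G' \to Q \to 1$ is exact, and applies the five lemma to the inclusion $G' \hookrightarrow G$, whereas you carry out the underlying diagram chase (lift the word, land in $K$, conclude $x \in G'$) by hand. One small point in your favor is that you make explicit why $G' \to Q$ is surjective and why its kernel is all of $K$, which the paper leaves implicit when asserting that the top row is exact.
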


\begin{proof} Let $G'$ be the group generated by $K \cup \tilde{Q}$. Then the following diagram commutes and has exact rows:

\[\begin{tikzcd}[ampersand replacement=\&]
	1 \& K \& {G'} \& Q \& 1 \\
	1 \& K \& G \& Q \& 1
	\arrow[from=1-1, to=1-2]
	\arrow[from=1-2, to=1-3]
	\arrow["{=}", from=1-2, to=2-2]
	\arrow[from=1-3, to=1-4]
	\arrow[from=1-3, to=2-3]
	\arrow[from=1-4, to=1-5]
	\arrow["{=}", from=1-4, to=2-4]
	\arrow[from=2-1, to=2-2]
	\arrow[from=2-2, to=2-3]
	\arrow[from=2-3, to=2-4]
	\arrow[from=2-4, to=2-5]
\end{tikzcd}\] Our result follows from the five lemma. \end{proof}

We are now ready to prove Corollary \ref{corollary:kernelaction}.

\begin{proof}[Proof of Corollary \ref{corollary:kernelaction}]

Let $$\alpha: \mathcal{H}_g \rightarrow \operatorname{Out}(F_g)$$ be the surjective map given by the action of $\mathcal{H}_g$ on $\pi_1(\mathcal{V}_g)$. Let $\Gamma$ be a finite-index subgroup of $\mathcal{H}_g$ such that $K := \operatorname{ker}(\alpha|_{\Gamma})$ is generated by meridian multitwists. Note that $Q := \alpha(\Gamma)$ is a finite-index subgroup of $\operatorname{Out}(F_g)$. We have the short exact sequence $$1 \rightarrow K \rightarrow \Gamma \rightarrow Q \rightarrow 1$$ and, by Lemma \ref{lemma:generators}, $\Gamma$ is generated by the sets: \begin{itemize}
    \item $K$; and
    \item A set $\tilde{Q}$ consisting of one lift $\tilde{q} \in \Gamma$ for each generator of $Q$.
\end{itemize} By Theorem \ref{maintheorem:multitwistsdie}, all generators of $K$ vanish in $H_1(\Gamma; \Q)$. Since $Q$ is finite-index in $\operatorname{Out}(F_g)$, we must have $H_1(Q; \Q) = 0$. Then for any generator $\tilde{q}$ of $\tilde{Q}$, there must exist $m \in \Z$ such that $\tilde{q}^m$ can be written as a product of commutators of elements in $\tilde{Q}$. Thus we have $\tilde{q}^m \in [\Gamma, \Gamma]$ and $[\tilde{q}]=0$ in $H_1(\Gamma; \Q)$. \end{proof}

\end{document}